\newtheorem{thm}{Theorem}[section]
\newtheorem{cor}[thm]{Corollary}
\newtheorem{lem}[thm]{Lemma}
\newtheorem{prop}[thm]{Proposition}
\newtheorem{defn}[thm]{Definition}
\theoremstyle{remark}
\newtheorem{rem}[thm]{Remark}
\numberwithin{equation}{section}
\theoremstyle{definition}
\newcommand{\R}{\mathbb R}
\newcommand{\Z}{\mathbb Z}
\newcommand{\N}{\mathbb N}
\newcommand{\eps}{\varepsilon}
\newcommand{\e}{\epsilon}
\newcommand{\om}{\Omega}
\newcommand{\al}{\alpha}
\newcommand{\La}{\Lambda}
\newcommand{\la}{\lambda}
\newcommand{\D}{\nabla}
\newcommand{\vphi}{\varphi}
\newcommand{\dist}{\mathrm{dist}}
\newcommand{\pma}{\eqref{eqn-pma}\,}
\newcommand{\ddua}{(\det D^2 u)^p}
\newcommand{\shxt}{S_h(x,t)}
\def\XXint#1#2#3{{\setbox0=\hbox{$#1{#2#3}{\intr}$}
     \vcenter{\hbox{$#2#3$}}\kern-.5\wd0}}
\begin{document}

\title{$C^{1,\al}$ regularity of solutions to parabolic  Monge-Amp\'ere  equations}

\author{Panagiota Daskalopoulos$^*$}

\address{Department of
Mathematics, Columbia University, New York,
 USA}
\email{pdaskalo@math.columbia.edu}

\author{Ovidiu Savin$^{**}$}

\address{Department of
Mathematics, Columbia University, New York,
 USA}
\email{savin@math.columbia.edu}

\thanks{$**:$ Partially supported
by NSF grant 0701037 and a Sloan Fellowship. }

\begin{abstract}
We study interior $C^{1, \al}$ regularity of viscosity solutions
of the
 parabolic Monge-Amp\'ere equation
$$u_t = b(x,t)  \, \ddua,$$
with exponent $p >0$ and with coefficients $b$ which are bounded
and measurable. We show that when $p$ is less than the critical
power $\frac{1}{n-2}$ then solutions become instantly $C^{1, \al}$
in the interior. Also, we prove the same result for any power
$p>0$ at those points where either the solution separates from the
initial data, or where the initial data is $C^{1, \beta}$.
\end{abstract}

\maketitle

\section{Introduction}

In this paper we investigate interior  regularity of viscosity
solutions of the
 parabolic Monge-Amp\'ere equation
\begin{equation}\label{eqn-pma0}
u_t = b(x,t)  \, \ddua,
\end{equation}
with exponent $p >0$ and with coefficients $b$ which are bounded
measurable and satisfy
\begin{equation}\label{eqn-boundsb}
\la \leq b(x,t) \leq \La
\end{equation}
for some fixed constants $\la >0$ and $\La < \infty$. We assume
that the function $u$ is convex in $x$ and increasing in $t$.

Equations of the form of \eqref{eqn-pma0} appear in geometric
evolution problems and in particular in  the motion of a   convex
$n$-dimensional hyper-surface $\Sigma^n_t$ embedded in $\R^{n+1}$
under Gauss curvature flow with exponent $p$, namely the equation
\begin{equation}\label{eqn-gcf1}
\frac {\partial P}{\partial t} = K^p\, {\bf N}
\end{equation}
where  each point $P$ moves in the inward direction ${\bf N}$ to
the surface with velocity equal to the $p$-power of its  Gaussian
curvature  $K$. If we express the surface $\Sigma^n(t)$ locally as
a graph $x_{n+1}=u(x,t)$, with $x \in \om \subset \R^n$, then the
function $u$ satisfies the parabolic Monge-Amp\'ere equation
\begin{equation}\label{eqn-gcf2}
u_t = \frac{(\det D^2 u)^p}{ (1+|\nabla u|^2)^{\frac{(n+2)p-1}2}}.
\end{equation}
Since any convex solution satisfies locally the bound $|\nabla u|
\leq C$, equation \eqref{eqn-gcf2} becomes of the form
\eqref{eqn-pma0}.

The case $p=1$ corresponds to the well studied Gauss curvature
flow which was first introduced by W. Firey in \cite{Fir} as a
model for the wearing process of stones. It follows from the work
of Tso \cite{Tso} that uniformly strictly convex hyper-surfaces
will become instantly $C^\infty$ smooth and they remain smooth up
to their vanishing time $T$. However, convex surfaces which are
not necessarily uniformly strictly convex, may not become
instantly strictly convex and smooth (c.f. \cite{H}, \cite{CEI})
and their regularity poses an interesting problem that we will
investigate in this paper.

Equations of the form \eqref{eqn-gcf1} for different  powers of $p
>0$
 were studied  by B.  Andrews in \cite{An1}
(see also in \cite{Chow}). He showed that when $p \leq 1/n$ any
convex hyper-surface   will become instantly strictly uniformly
convex and smooth.

It can be seen from radially symmetric examples that, when $p
> 1/n$, surfaces evolving by \eqref{eqn-gcf1} or \eqref{eqn-pma0} may have a flat
side that persists for some time before it disappears. These
surfaces are of class $C^{1,\gamma_p}$ with $ \gamma_p :=
\frac{p}{n p-1}$. Since $\gamma_p < 1$ if $p > \frac 1{n-1}$,
solutions which are not strictly convex fail, in general, to be of
class $C^{1,1}$ in this range of exponents. In particular,
solutions to the Gauss curvature flow ($p=1$) with flat sides are
no better than $C^{1,\frac 1{n-1}}$ while the flat sides persist.
The $C^{1, \alpha }$ regularity of solutions of \eqref{eqn-gcf1}
for any $p
>0$ will be addressed in this work.

In dimension $n=2$, the regularity for the Gauss curvature  flow
$(p=1$) is well understood.   It follows from the  work of B.
Andrews in \cite{An2} that,  in this case,  all surfaces  become
instantly of class $C^{1,1}$ and remain so up to a time when they
become strictly convex and therefore smooth, before they contract
to a point.  Also, it follows from the works of the first author
with Hamilton \cite{DH} and Lee \cite{DL} that $C^{1,1}$ is the
optimal regularity here, as can be seen from evolving surfaces
$\Sigma^2_t$ in $\R^3$ with flat sides. The optimal regularity of
surfaces with flat sides and interfaces was further discussed in
\cite{DH, DL}.

We mention that $C^{1,\al}$ and $W^{2,p}$ interior estimates were
established by Guti\'errez and Huang in \cite{GH} for equations
similar to \eqref{eqn-pma0} for $p=-1$ and by Huang and Lu for
$p=\frac{1}{n}$. However, their work requires uniform convexity of
the initial data and strict monotonicity of the function on the
lateral boundary.

\smallskip

If $w$ is a solution to the Monge-Amp\'ere equation
$$\det D^2 w=1, \quad x\in \om \subset \R^n,$$ then $u(x,t)=w(x)+t$
solves equation \eqref{eqn-pma0}  with $b \equiv 1$ for any $p$.
The question of regularity for the Monge-Amp\'ere equation is
closely related to the strict convexity of $w$. Strict convexity
does not always hold in the interior as it can be seen from a
classical example due to Pogorelov \cite{P}. However, Caffarelli
\cite {C1} showed that if the convex set $D$ where $w$ coincides
with a tangent plane contains at least a line segment then all
extremal points of $D$ must lie on $\partial \om$. We prove the
parabolic version of this result for solution of \eqref{eqn-pma0}.
Our result says that, if at a time $t$ the convex set $D$ where
$u$ equals a tangent plane contains at least a line segment then,
either the extremal points of $D$ lie on $\partial \om$ or
$u(\cdot,t)$ coincides with the initial data on $D$ (see Theorem
\ref{thm-fs1}). The second behavior occurs for example in those
solutions with flat sides. In other words, a line segment in the
graph of $u$ at time $t$ either originates from the boundary data
at time $t$ or from the initial data.

We prove a similar result for angles instead of line segments,
which is crucial for our estimates. We show that if at a time $t$
the solution $u$ admits a tangent angle from below then either the
set where $u$ coincides with the edge of the angle has all
extremal points on $\partial \om$ or the initial data has the same
tangent angle from below (see Theorem \ref{thm-angle}).

The $C^{1, \al}$ regularity is closely related to understanding
whether or not solutions separate instantly away from the edges of
a tangent angle of the initial data. It turns out that when $p >
\frac 1{n-2}$ the set where $u$ coincides with the edge of the
angle may persist for some time (see Proposition \ref{prop-p12}),
hence $C^1$ regularity does not hold in this case without further
hypotheses. If $p<\frac 1{n-2}$ we prove that, at any time $t$
after the initial time, solutions are $C^{1,\al}$ in the interior
of any section of $u(\cdot,t)$ which is included in $\om$ (see
Theorem \ref{thm-c1a2}). For the critical exponent
$p=\frac{1}{n-2}$ we show that solutions are $C^1$ with a
logarithmic modulus of continuity for the gradient (see Theorem
\ref{thm-c1a3}).

In the case of any power $p>0$ we prove $C^{1,\al}$ estimates at
all points $(x,t)$ where $u$ separates from the initial data (see
Theorem \ref{thm-c1a4}). Also, if we assume that the initial data
is $C^{1, \beta}$ in some direction $e$ then we show that the
solution is $C^{1,\al}$ in the same direction $e$ for all later
times (see Theorem \ref{thm-c1a1}).

In particular, our methods can be applied for solutions with flat
sides. If the initial data has a flat side $D\subset \R^n$, then
solutions are $C^{1,\al}$ for all later times in the interior of
$D$. A similar statement holds for solutions that contain edges of
tangent angles: they are $C^{1,\al}$ along the direction of the
edge for all later times. To be more precise we state these
results below.

\begin{thm}\label{thm-fb}
Let $u$ be a viscosity solution of \eqref{eqn-pma0} in $\om \times
[0,T]$ with $u(x,0) \ge 0$ in $\om$, $u(x,0) \ge 1$ on $\partial
\om$. There exists $\alpha>0$ depending on $n,\lambda, \Lambda, p$
such that

 a) $u(x,t)$ is $C^{1, \alpha}$ in $x$ at all points
$(x,t)$ with $x$ an interior point of the set $\{u(x,0)=0\}$ and
$u(x,t) <1$.

 b) If $u(x,0) \ge |x_n|$ then $u(x,t)$ is $C^{1,
\alpha}$ in the $x'$ variables at all points $((x',0),t)$ with
$x'$ an interior point of the set $\{x' \, \, : \, \,
u((x',0),0)=0\}$ and $u(x,t)<1$.
\end{thm}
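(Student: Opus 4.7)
The theorem is an application of the earlier directional results; the key point in both parts is that the hypotheses on the initial data force the situation into the reach of either Theorem~\ref{thm-c1a4} or Theorem~\ref{thm-c1a1}. My strategy in both (a) and (b) is to split into two cases at the base point according to whether the solution has already separated from the initial data.

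For part~(a), I fix a base point $(x_0,t_0)$ with $x_0$ interior to $\{u(\cdot,0)=0\}$ and $u(x_0,t_0)<1$. The hypothesis $u\ge 1$ on $\partial\om$ together with $u(x_0,t_0)<1$ and the convexity of $u(\cdot,t_0)$ will be used to confine the relevant section of $u(\cdot,t_0)$ through $x_0$ compactly inside $\om$, which is the localization needed to invoke the earlier theorems. If $u(x_0,t_0)>0$, then since $u$ is monotone in $t$ with $u(x_0,0)=0$, the solution has separated from the initial data at $(x_0,t_0)$ and Theorem~\ref{thm-c1a4} gives $C^{1,\al}$ regularity in $x$. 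If instead $u(x_0,t_0)=0$, then the initial data vanishes identically in a small ball around $x_0$, hence is trivially $C^{1,\be}$ in every direction $e$. I then apply Theorem~\ref{thm-c1a1} in $n$ linearly independent directions to obtain directional $C^{1,\al}$ estimates with uniform constants, and combine them using convexity of $u(\cdot,t_0)$ to upgrade to $C^{1,\al}$ in $x$.

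Part~(b) follows the same scheme restricted to tangential directions. At a base point $((x_0',0),t_0)$ with $x_0'$ interior to $\{x':u((x',0),0)=0\}$ and $u((x_0',0),t_0)<1$: in Case~1, $u((x_0',0),t_0)>0$ and Theorem~\ref{thm-c1a4} gives full $C^{1,\al}$ in $x$, which in particular implies it in the $x'$ variables; in Case~2, $u((x_0',0),t_0)=0$, and the bound $u(x,0)\ge |x_n|$ together with $u(\cdot,0)\ge 0$ forces the initial data to vanish along the edge $\{x_n=0\}$ in an $x'$-neighborhood of $x_0'$, so it is identically zero along every tangential ray and hence $C^{1,\be}$ in every tangential direction $e=(e',0)$. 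Applying Theorem~\ref{thm-c1a1} in each of the $(n-1)$ tangential coordinate directions and using convexity of the restriction of $u(\cdot,t_0)$ to $\{x_n=0\}$ then yields $C^{1,\al}$ regularity in the $x'$ variables. No regularity in $x_n$ is obtained or expected, since the initial wedge $|x_n|$ is merely Lipschitz normal to the edge.

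The main obstacle, assuming Theorems~\ref{thm-c1a4} and~\ref{thm-c1a1} are already established, is the bookkeeping needed to convert finitely many directional $C^{1,\al}$ estimates into a genuine pointwise $C^{1,\al}$ estimate with constants depending only on $n,\la,\La,p$. What makes this step work is that the restriction of $u(\cdot,t_0)$ to any affine subspace through the base point is convex, so uniform directional $C^{1,\al}$ control along a basis of directions automatically extends to all directions through that point. A minor but non-trivial bookkeeping point is verifying that the directional $C^{1,\be}$ hypothesis of Theorem~\ref{thm-c1a1} is met in the uniform sense needed to yield a pointwise conclusion, which here is immediate because the initial data is identically zero on the relevant neighborhood in both parts.
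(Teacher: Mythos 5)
Your overall scheme (split at the base point according to whether the solution has separated from the initial data, handle the separated case with Theorem \ref{thm-c1a4}, and in the non-separated case exploit that the initial data is flat in the relevant directions) is the same as the paper's, which splits at $u(x,t)\le c_n$ versus $c_n<u(x,t)<1$ and otherwise argues identically. The localization of sections via $u\ge 1$ on $\partial\om$, and the combination of directional $C^{1,\al}$ bounds along a basis into a full $C^{1,\al}$ bound using convexity, are both fine.

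The gap is in the non-separated case: you invoke Theorem \ref{thm-c1a1}, but its hypothesis is $\partial_e u(\cdot,-T)\in C^\beta(\bar S)$ on the \emph{entire} closed section $\bar S=\{u(\cdot,-T)<1\}$, not merely near the base point. In part (a) the initial data is only known to vanish on a neighborhood of $x_0$ and to be nonnegative elsewhere, so no global $C^{1,\beta}$ control on $\bar S$ is available; in part (b) the situation is worse, since $u(x,-T)\ge|x_n|$ with equality on the edge means $u(\cdot,-T)$ is not even differentiable across $\{x_n=0\}$, so $\partial_e u(\cdot,-T)$ for tangential $e$ is not a $C^\beta(\bar S)$ function and the hypothesis of Theorem \ref{thm-c1a1} simply fails. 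You cannot repair this by shrinking $\om$ to the ball where the initial data is flat, because the later-time sections of $u$ through the base point need not stay inside that ball and the boundary condition $u>1$ would be lost. The correct tool is Theorem \ref{thm-c1ad}(a), which is the pointwise, directional version: it only requires the initial data to be $C^{1,\beta}$ \emph{at the base point in the direction} $e$ (here trivially satisfied with $\beta=1$ since the initial data is constant along the relevant rays), together with a $d$-balanced compactly supported section at the final time, which your localization step supplies with $d=C_n$ universal; this yields a universal $\al=\al(1,C_n)$. With that substitution your argument goes through and coincides with the paper's proof.
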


We finally remark that the equations \eqref{eqn-pma0} for negative
and positive powers are in some sense dual to each other. Indeed,
if $u$ is a solution of \eqref{eqn-pma0} and $u^*(\xi,t)$ is the
Legendre transform  of $u(\cdot,t)$ then
$$u^*_t=-\tilde b(\xi,t) (\det D^2 u^*)^{-p}, \quad \quad \la \le \tilde b(\xi,t)\le \La.$$

The paper is organized as follows. In section 2 we introduce the
notation and some geometric properties of sections of convex
functions. In sections 3 we derive estimates for subsolutions and
supersolutions. In section 4 we discuss the  separation of
solutions away from constant solutions such as planes and angles.
In sections 5 and 6 we discuss the geometry of lines and angles.
In section 7 and 8 we quantify the results of section 6 and prove
the main theorems concerning $C^{1, \al}$ regularity.

\smallskip

\section{Preliminaries}

We use the standard notation $B_r(x_0):=\{x \in \R^n \, : \, \,
|x-x_0|<r\}$ to denote the open ball of radius $r$ and center
$x_0$, and we write shortly $B_r$ for $B_r(0)$. Also, given a
point $x=(x_1,\cdots,x_n) \in \R^n$, we denote by $x'$ the point
$x'=(x_1,\cdots,x_{n-1}) \in \R^{n-1}$.

Throughout the paper we refer to positive constants depending on
$n$, $\lambda$, $\Lambda$ and $p$ as universal constants. We
denote them by abuse of notation as $c$ for small constants and
$C$ for large constants, although their values change from line to
line. If a constant depends on universal constants and other
parameters $d$, $\delta$ etc. then we denote them by $c(d,
\delta)$, $C(d,\delta)$.

We use the following definition to say that a function is $C^{1,
\alpha}$ in a pointwise sense.

\begin{defn}\label{def-c1a}
 A function $w$ is $C^{1, \alpha}$ at a point $x_0$ if there
exists a linear function $l(x)$ and a constant $C$ such that
$$|w(x)-l(x)|\le C|x-x_0|^{1+\alpha}.$$

A function is $C^{1, \alpha}$ in a set $D$ if it is $C^{1,
\alpha}$ at each $x \in D$.

A function is $C^{1, \alpha}$ at a point $x_0$ in the direction
$\bf e$ if it is $C^{1, \alpha}$ at $x_0$ when restricted to the
line $x_0+s \bf e$, $s\in \R$.

\end{defn}

 Next we introduce the notion of a section. We denote by $\shxt \subset \R^n$ a section at height $h$ of the
function $u$ at the point $(x,t)$ defined by
$$\shxt := \{ \, y \in \om:  \,\,\,  u(y,t) \leq u(x,t) + p_h \cdot (y-x) + h \, \},$$
for some $p_h \in \R^n$. Sometimes, in order to simplify the
notation, we denote such sections as $S_h$, $S_h(t)$ whenever
there is no possibility of confusion.

We define the notion of $d$-balanced convex set with respect to a
point.

\begin{defn}[$d$-balanced convex set] \label{def-db} A convex set  $S$ with $0 \in S$ is called
 $d$-balanced  with respect to the origin, if there exists  a linear transformation $A$
 (which maps the origin into the origin) such that
$$B_1 \subset A\, S \subset B_d.$$
\end{defn}

Clearly, the notion of $d$-balanced set around $0$ is invariant
under linear transformations. Next we recall

{\bf John's lemma} {\it Every convex set in $\R^n$ is
$C_n$-balanced with respect to its center of mass, with $C_n$
depending only on $n$.}

It is often convenient to consider sections at a point $x$ that
have $x$ as center of mass. We denote such sections by $T_h(x,t)$
instead of $S_h(x,t)$. The existence of centered sections is due
to Caffarelli \cite{C2}.

{\bf Theorem.} [Centered  sections] {\it Let $w$ be a convex
function defined on a bounded convex domain $\om$. For each $x_0
\in \om$, and $h>0$ there exists a centered $h$-section $T_h(x_0)$
at $x_0$
$$T_h(x_0):= \{  w(x) < w(x_0) + h + p_{h} \cdot (x-x_0) \, \}$$
(for some $p_h \in \R^n$)  which has $x_0$ as its center of mass.}

The following simple observations follow from the definition of
$d$-balanced sets and will be used throughout the paper.

\begin{rem}\label{rem-fs1} Assume that the $h$-section of $w$, $$S_h(x_0)=\{w(x) < w(x_0) + h + p_h \cdot (x-x_0)\}, $$ is $d$-balanced around $x_0$.
Then,
$$-d \, h <  w(x) - ( w(x_0) + h + p_h \cdot (x-x_0) ) < 0, \qquad \mbox{in}\,\, \,
S_h(x_0).$$ Also, if we assume $w \ge 0$ and $w(x_0)=0$, then
$w(x) \le d \, h$ for all $x \in S_h(x_0)$.
\end{rem}

Next lemma proves the existence of certain balanced sections which
are compactly included in the domain of definition. It says that
if we have a $d$-balanced section $S_h$ which is compactly
included in $\om$, then we can find $C_nd$-balanced sections for
all smaller heights than $h$ that are included in $S_h$.

\begin{lem}\label{lem-c1a-1} {\em (a)} Assume that $w$ is a convex function
defined on a set $\om \subset \R^n$  with $w(0)=0$ such that
$S_1:= \{ x: \, w(x) < 1 \} \subset \subset \om $ is d-balanced
around $0$. Then, there exists a constant $C_n >0$ depending only
on $n$, such that for every  $h <1$ we can find a section $S_h$ at
height $h$ with  $S_h \subset S_1$ and $S_h$ is $C_nd$-balanced
around $0$.

{\em (b)} Let us denote by $r(x)$ the  volume of the maximal
ellipsoid centered at $x$
 that is included in $S_1$. Then, there exists a number $C_n >0$ such that the section $S_h$
 in part {\em (a)}  is either $C_n$-balanced around  $0$ or $r(x^*) \geq 2\, r(0)$ where
 $x^*$ is the center of mass of $S_h$.
\end{lem}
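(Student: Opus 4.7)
By affine invariance of the $d$-balanced property, I first normalize: applying a linear transformation fixing the origin, I may assume $B_1 \subset S_1 \subset B_d$ with $w(0)=0$. Sections through $0$ and the balance notion around $0$ transform covariantly, so it suffices to prove the lemma in this normalized setting.

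For each slope $p \in \R^n$, the convex set $S^p := \{x : w(x) < h + p\cdot x\}$ contains $0$ since $w(0)=0<h$. A basic observation is that $S^p \subset S_1$ whenever $|p| \le (1-h)/d$: for $x \in \partial S_1$ one has $w(x)=1$ and $h + p\cdot x \le h + |p|\,d \le 1$, so $x \notin S^p$; connectedness of $S^p$ then forces $S^p \subset S_1$. Applying Caffarelli's centered-section theorem yields a slope $p_h$ such that $T_h(0):=S^{p_h}$ has $0$ as center of mass, and by John's lemma $T_h(0)$ is $C_n$-balanced around $0$. If $T_h(0) \subset S_1$ (certainly if $|p_h| \le (1-h)/d$) I take $S_h := T_h(0)$; this handles (a) with the stronger $C_n$-balance in place of $C_n d$-balance, and the first alternative of (b) with $x^*=0$.

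In the opposite case $T_h(0) \not\subset S_1$, I move the base point off $0$. For $y$ in a suitable subset of $S_1$, let $S(y)$ denote the centered section of $w$ with $y$ as center of mass whose defining affine function takes the value $h$ at $x=0$; existence of $S(y)$ follows from Caffarelli's theorem applied at $y$, combined with a monotonicity argument in the height parameter to enforce ``value $h$ at $0$''. I then select $y^*$ along a ray from $0$ into the interior of $S_1$, opposite to the direction of $p_h$, at the first moment that $S(y^*) \subset S_1$ with $\partial S(y^*)$ touching $\partial S_1$, and set $S_h := S(y^*)$. By John's lemma $S_h$ is $C_n$-balanced around $y^* = x^*$; combined with $S_h \subset S_1 \subset B_d$ and $0 \in S_h$, a short geometric estimate bounding $|y^*|$ in terms of the size of $S_h$ gives the $C_n d$-balance around $0$ required in (a). For (b), I argue that $r(x^*) \ge 2\,r(0)$: the failure of the first case pins the maximal inscribed ellipsoid of $S_1$ at $0$ to be narrow in the direction of $p_h$ (since the centered section at $0$ escapes $S_1$ precisely along that direction), while the shift of $x^*$ lands in a region of $S_1$ where that same direction admits substantially more room.

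The main obstacle is the quantitative doubling $r(x^*) \ge 2\,r(0)$ in the tangent-section case. This requires relating the John ellipsoid of the escaping section $T_h(0)$ to the maximal inscribed ellipsoids of $S_1$ at $0$ and $y^*$, and then choosing the drift of $y^*$ along the ray just large enough to secure the factor of two; a continuity/pigeonhole argument on the volume of the inscribed ellipsoid along the ray from $0$ to the center of $S_1$ should yield the correct stopping point.
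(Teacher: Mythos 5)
Your first case (the centered section $T_h(0)$ at the origin, kept whenever it lies in $S_1$) coincides with the paper's, and your observation that $\{w<h+p\cdot x\}\subset S_1$ for $|p|\le (1-h)/d$ is correct. The problems are in the second case. First, the step "$S_h$ is $C_n$-balanced around its center of mass $x^*$, $0\in S_h\subset B_d$, hence $S_h$ is $C_nd$-balanced around $0$" is not a valid implication: being balanced around $0$ requires $0$ to be \emph{quantitatively interior} to $S_h$, and nothing in your construction prevents $0$ from sitting arbitrarily close to $\partial S_h$ (a set can be $1$-balanced around its center of mass and fail to be $d$-balanced around a boundary point for every $d$). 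The paper's construction is engineered precisely to supply the missing interiority: it keeps the defining plane of the form $x_{n+1}=h+t\,e_n\cdot x$ and lowers the slope $t$ from $\alpha$ until the lifted section touches $\{x_{n+1}=1\}$ at a point $(x_0,1)$. This (i) leaves the slice $S_h\cap\{x_n=0\}$ unchanged, so balance in the $x'$ directions around $0$ is inherited from the centered section, and (ii) forces $x_0\cdot e_n\ge 1$ while the new center of mass satisfies $x^*\cdot e_n\le 0$ (only points with $x_n>0$ are removed when the slope decreases), so $0$ is sandwiched in the $e_n$ direction. Your two-parameter family $S(y)$ (center of mass $y$, value $h$ at the origin) has no analogous anchors, and even the existence/continuity of the "first moment" $y^*$ along the ray is asserted rather than shown.

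Second, and more seriously, the dichotomy in part (b) — the heart of the lemma — is not proved: you explicitly defer the doubling $r(x^*)\ge 2\,r(0)$ to "a continuity/pigeonhole argument \dots [that] should yield the correct stopping point." The paper's mechanism is concrete and quite different from what you sketch: normalize so that the maximal ellipsoid centered at $0$ is $B_1$; if the first alternative fails then $-x^*\cdot e_n> C_0(n)\,x_0\cdot e_n\ge C_0(n)$, so $|x^*|\ge C_0(n)$; since $x^*$ is the center of mass of $S_h\ni 0$, John's lemma gives $(1+c_n)x^*\in S_h\subset S_1$, and the convex hull of $B_1$ and $(1+c_n)x^*$ — a long cone contained in $S_1$ — contains an ellipsoid centered at $x^*$ of twice the volume of $B_1$. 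No comparison of "John ellipsoids of the escaping section" is needed. As written, your proposal identifies the right objects but leaves both quantitative conclusions unestablished, and one of the bridging claims is false as stated.
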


\begin{proof} a) For $h <1$ fixed, consider the section $S_h$ at height  $h$ that has $0$ as its center of mass. If $S_h \subset S_1$
we have nothing to prove.  Assume not and let's say
$$S_h= \{ w(x) < h + \alpha \, e_n \cdot x \, \}, \qquad \mbox{for some}\,\,\, \alpha >0.$$
We decrease the slope $\alpha$ continuously till we obtain the
section $S_{h,t}:= \{ w < h + t \, e_n \cdot x \, \}$ for which
the set
$$\{ \, (x,x_{n+1}): \,\, x \in \overline S_{h,t}, \,\, x_{n+1} = h + t\, e_n \cdot x \, \}$$
becomes tangent to the hyper-plane $x_{n+1}=1$ at a point
$(x_0,1)$. We will show that $S_{h,t}$ satisfies (a) and (b).

Clearly $S_{h,t}\subset S_1$.  At the point $x_0$ we have $x_0 \in
\partial S_1$ and
$$ S_1 \subset \{ (x-x_0) \cdot e_n \leq 0 \, \}.$$
Since $S_1$ is $d$-balanced, we may assume that $B_1 \subset S_1
\subset B_d$ hence $1\le x_0 \cdot e_n$. Also, $S_h \cap \{ x_n =0
\} = S_{h,t}  \cap \{ x_n =0 \} $, hence the section $S_{h,t}$ is
already $C_n$-balanced in $x':=(x_1,\cdots,x_{n-1})$ around $0$.

Since $t \le \alpha$, the center of mass $x^*$ of $S_{h,t}$
satisfies $x^* \cdot e_n \le 0$. This together with $x_0 \cdot e_n
\ge 1$ and $x_0 \in \partial S_{h,t} \subset \bar B_d$, implies
that $S_{h,t}$ is $C_nd$-balanced around $0$ in all the
directions.

\

b) If we assume that
\begin{equation}\label{eqn-c1a-3}
- x^* \cdot e_n \leq C_0(n) \, x_0 \cdot e_n
\end{equation}
then we obtain that $S_{h,t}$ is $C(n,C_0(n))$ balanced with
respect to $0$. Assume now that \eqref{eqn-c1a-3} doesn't hold and
denote by $E$ the maximum volume ellipsoid centered at $0$ which
is included in $S_1$.  After an affine transformation we have the
following:
$$E=B_1\subset S_1, \quad S_{h,t} \subset \{ \, (x-x_0) \cdot e_n \leq 0 \, \}, \quad x_0 \in \partial S_{h,t}$$
and
$$- x^* \cdot e_n > C_0(n) \, x_0 \cdot e_n \geq C_0(n)$$
which implies that $|x^*| \geq C_0(n)$. Since $x^*$ is the center
of mass of $S_{h,t}$ and $0\in S_{h,t}$ we see from John's lemma
that $(1+ c_n) \, x^* \in S_{h,t} \subset S_1$. Hence if $C_0(n)$
is sufficiently large we can find an ellipsoid of volume $2$
centered at $x^*$ and included in the convex set generated by
$(1+c_n)\, x^*$ and $B_1$. This convex set is contained in $S_1$,
and this concludes the proof of part (b).
\end{proof}

\section{Estimates for subsolutions and supersolutions}

In this section we use the scaling of the equation to derive
estimates for viscosity subsolutions and supersolutions of
\begin{equation}\label{eqn-pma}
\la \, \ddua \leq u_t \leq \La \, \ddua, \quad \quad x \in \om.
\end{equation}
Throughout the paper we assume that $u$ is convex in $x$,
increasing in $t$ and the domain $\om$ is convex and bounded.

Let us now introduce the scaling of equation \eqref{eqn-pma}.
Given an affine transformation $A:= \R^n \to \R^n$ and $ h >0, m
>0$ positive constants, the function
$$v(x,t):= \frac 1h \, u(A\, x,m\, t)$$
is a solution of equation \eqref{eqn-pma} provided
$$m = \frac {(\det A)^{2p}}{h^{np-1}}.$$

The equation is not affected by adding or subtracting a linear
function in $x$. For this reason we write our comparison results
using constant functions instead of linear functions.

\begin{lem}\label{lem-p1} Let  $u$ be a viscosity subsolution in $B_1$ i.e.
$$u_t \le \Lambda (\det D^2 u)^p$$ with $$ \mbox{$u(0,0)\ge -1$, \quad
$u(x,0)\le 0$ on $\partial B_1$.}$$ Then
$$u(0,t) \ge -2 \quad \mbox{for $t \ge -c$},$$
with $c>0$ universal.
\end{lem}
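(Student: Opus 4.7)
The plan is to argue by contradiction with a quadratic classical supersolution of $u_t = \La\, (\det D^2 u)^p$ and invoke the parabolic comparison principle. As a preliminary, the monotonicity of $u$ in $t$ combined with $u(\cdot,0)\le 0$ on $\partial B_1$ gives $u \le 0$ on $\partial B_1 \times (-\infty,0]$, and the convexity of $u$ in $x$ then propagates the inequality to the whole ball: $u \le 0$ on $B_1 \times (-\infty,0]$.

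Suppose for contradiction that $u(0,t_0)=-M$ for some $t_0 \in [-c,0]$ with $M>2$, where the universal constant $c$ is to be chosen. Applying convexity of $u(\cdot,t_0)$ along each radial segment from $0$ to $\partial B_1$ together with $u \le 0$ on $\partial B_1$ yields the cone upper bound
$$u(x,t_0) \le M(|x|-1),\qquad x \in B_1.$$
Introduce the barrier
$$\phi(x,t) \;:=\; \frac{M}{2}(|x|^2-1)+\La M^{np}(t-t_0).$$
Then $D^2\phi = M\,I$ and $\phi_t=\La M^{np}=\La(\det D^2 \phi)^p$, so $\phi$ is a classical (hence viscosity) supersolution. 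On the parabolic boundary of $B_1 \times [t_0,0]$ we have $\phi\ge u$: on the lateral piece $\phi = \La M^{np}(t-t_0)\ge 0 \ge u$, while on the bottom the identity
$$\tfrac{M}{2}(|x|^2-1)-M(|x|-1)=\tfrac{M}{2}(|x|-1)^2\ge 0$$
combined with the cone bound yields $\phi(x,t_0)\ge M(|x|-1)\ge u(x,t_0)$.

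The parabolic comparison principle then gives $u\le \phi$ throughout $B_1\times[t_0,0]$. Evaluating at $(0,0)$ and using $u(0,0)\ge -1$:
$$-1 \le u(0,0)\le \phi(0,0)=-\tfrac{M}{2}+\La M^{np}|t_0|,$$
equivalently $|t_0|\ge (M-2)/(2\La M^{np})$. Choosing $c=c(n,p,\La)$ sufficiently small so this fails for all $M>2$ in the admissible range forces $M\le 2$, giving $u(0,t)\ge -2$ for $t\ge -c$. The main obstacle is calibrating the universal constant $c$: the inequality just derived controls $M$ only when $c$ is small relative to a power of $M$, and when $np>1$ one may need an auxiliary a priori bound on $M$ (obtained from a rougher barrier or a short iteration, noting that the scaling of the equation is available as discussed at the start of the section) before extracting the sharp conclusion $u(0,t)\ge -2$.
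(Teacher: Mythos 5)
Your setup — monotonicity plus convexity to get $u\le 0$, a cone upper bound at the bad time, a quadratic solution of $w_t=\La(\det D^2w)^p$ as barrier, and comparison — is exactly the paper's strategy, and the comparison step itself is correct. But the final calibration has a genuine gap that you only half-identify. The inequality you derive, $|t_0|\ge (M-2)/(2\La M^{np})$, has right-hand side tending to $0$ as $M\to 2^+$, so there is \emph{no} universal $c>0$ for which it "fails for all $M>2$": for any $c$ you cannot rule out, say, $u(0,-c)=-2.001$, and the conclusion $u(0,t)\ge -2$ is not reached. The auxiliary a priori bound on large $M$ that you propose addresses only the other end ($M\to\infty$ when $np>1$) and does nothing for $M$ near $2$. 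The root cause is that your barrier $\frac{M}{2}(|x|^2-1)$ has vertex value $-M/2$, which equals $-1$ when $M=2$, leaving zero margin against the hypothesis $u(0,0)\ge -1$.

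The paper's proof fixes both ends at once by never parametrizing by the exact value of $u(0,-c)$. If $u(0,-c)\le -2$, convexity already gives the cone bound with the \emph{fixed} vertex $-2$ and slope $2$, namely $u(x,-c)\le -2+2|x|$, no matter how negative $u(0,-c)$ actually is. One then compares with $w(x,t)=2|x|^2-\tfrac32+\La 4^{np}(t+c)$: it solves $w_t=\La(\det D^2w)^p$, dominates the cone at $t=-c$ since $2|x|^2-\tfrac32-(2|x|-2)=2\left(|x|-\tfrac12\right)^2\ge 0$, and dominates $u\le 0$ on the lateral boundary. Its vertex value $-\tfrac32$ sits strictly between $-2$ and $-1$, giving a fixed margin of $\tfrac12$: with $c=1/(4\La 4^{np})$ the comparison yields $u(0,0)\le w(0,0)=-\tfrac54<-1$, a contradiction. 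If you want to keep your $M$-dependent framing, you must at least replace your barrier by one whose vertex is uniformly below $-1$ for all $M\ge 2$ (e.g.\ the paper's choice applied after reducing to the cone with vertex $-2$), and separately handle large $M$; it is simpler to follow the paper and drop $M$ altogether.
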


\begin{proof}
If $ u(0,-c) \le -2$ then, by convexity, $u$ at time $-c$ is below
the cone generated by $(0,-2)$ and $\partial B_1$ i.e
$$u(x,-c) \le -2 + 2|x| \quad \mbox{ in $B_1$.}$$ This implies that $u \le w $
on the boundary of the parabolic cylinder $B_1 \times [-c,0]$ for
$$w(x,t):=m(t+c)+2|x|^2-\frac{3}{2} \quad \mbox{with $m=\Lambda 4^{np}$
}.$$ Since $w_t=\Lambda (\det D^2w)^p$ we obtain by the maximum
principle that $u(0,0)\le w(0,0)$ and we reach a contradiction by
choosing $c=1/(4m)$.

\end{proof}

\begin{rem}
The conclusion can be replaced by $u(0,t) \ge -(1+\delta)$ for $t
\ge -c(\delta)$.
\end{rem}

The scaling of the equation and the previous lemma give the
following:

\begin{prop}\label{prop-p2} Assume that $u$ is a viscosity subsolution
in a convex set $S$ with center of mass $0$. If
$$ \mbox{$u(0,0)\ge -h$, \quad $u(x,0)\le 0$ on $\partial S$,}$$
then $$u(0,t) \ge -2h \quad \mbox{for} \quad t \ge -c \,
\frac{|S|^{2p}}{h^{np-1}} ,$$ with $c$ universal.
\end{prop}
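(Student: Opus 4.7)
The plan is to reduce the statement to Lemma~\ref{lem-p1} by an affine rescaling. Since $S$ has center of mass at the origin, John's lemma furnishes a linear map $T$ fixing $0$ with $B_1 \subset TS \subset B_{C_n}$; comparing volumes, $|\det T|\cdot|S| = |TS| \sim 1$, so $|\det T^{-1}| \sim |S|$ up to universal factors.

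Introduce the rescaled function $v(y,t) := h^{-1}\, u(T^{-1}y, mt)$, defined for $y \in TS$, with $m := (\det T^{-1})^{2p}/h^{np-1} \sim |S|^{2p}/h^{np-1}$. By the scaling identity recorded at the beginning of the section (applied with the paper's $A := T^{-1}$), $v$ is a viscosity subsolution of $v_t \le \Lambda (\det D^2 v)^p$, is convex in $y$, increasing in $t$, and its domain $TS$ contains $B_1$. Next I verify the hypotheses of Lemma~\ref{lem-p1} for $v$ on $B_1$: at the origin $v(0,0) = h^{-1} u(0,0) \ge -1$; for $y \in \partial(TS)$ one has $T^{-1}y \in \partial S$ and hence $v(y,0) \le 0$, and since $v(\cdot,0)$ is convex on $TS$, the convex maximum principle propagates this bound to all of $TS$, in particular to $\partial B_1$.

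Applying Lemma~\ref{lem-p1} to $v$ yields $v(0,t) \ge -2$ for $t \ge -c$ with $c$ universal; unwinding the rescaling gives $u(0,s) \ge -2h$ for $s = mt \ge -cm \sim -c\,|S|^{2p}/h^{np-1}$, which is the claim. The one step that is not pure bookkeeping is the transfer of the sign of $v(\cdot,0)$ from the outer boundary $\partial(TS)$ down to the inner sphere $\partial B_1$; this is precisely where the convex maximum principle is used, rather than just the rescaled boundary condition. Everything else is a matter of tracking the interaction between John's lemma and the parabolic scaling.
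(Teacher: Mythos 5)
Your proof is correct and takes essentially the same route as the paper: normalize $S$ via John's lemma (using that $0$ is the center of mass), rescale by $v(y,t)=h^{-1}u(T^{-1}y,mt)$ with $m\sim |S|^{2p}/h^{np-1}$, and invoke Lemma~\ref{lem-p1}. The only extra content is your explicit use of the maximum principle for convex functions to carry the sign condition from $\partial(TS)$ down to $\partial B_1$, a step the paper leaves implicit; this is a legitimate and correctly filled detail.
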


\begin{proof} From John's lemma there exists a linear transformation $A$ such
that $$B_1 \subset A^{-1}S \quad \mbox{with} \quad \det A \ge
c(n)|S|.$$ The proposition follows by applying Lemma \ref{lem-p1}
to the rescaled solution
$$v(x,t):= \frac 1h \,  u(A x, m\, t), \qquad m = \frac{(\det A)^{2p}}{h^{np-1}}.$$
\end{proof}

\begin{rem}\label{rem-p2} We obtain a slightly different version of Proposition \ref{prop-p2}
by requiring $S$ to be only $d$-balanced around the origin and by
replacing the conclusion by $u(0,t)\ge -(1+\delta)h$. In this case
we need to take the constant $c=c(d, \delta)$ depending also on
$d$ and $\delta$ as can be seen from the proofs of Lemma
\ref{lem-p1} and Proposition \ref{prop-p2}.
\end{rem}

\begin{rem}\label{rem-p2'}
In general we apply Proposition \ref{prop-p2} at a point
$(x_0,t_0)$ in an $h$-section $S_h=S_h(x_0,t_0)$ which is
$d$-balanced around $x_0$ to conclude that
$$u(x_0,t)\ge u(x_0,t_0)-h \quad \mbox {for} \quad  t \ge t_0-c\frac{|S_h|^{2p}}{h^{np-1}}.$$
\end{rem}

\begin{rem}
At a given point we can apply the Proposition directly in the
sections given by its tangent plane. Indeed, taking $S$ to be the
set
$$S_h=S_h(0,0):=\{u < h+ P(x)\}, \quad P(x):= u(0,0) + \D u(0,0) \cdot x$$
we conclude that $u(x^*,t) \ge P(x^*)-2h$ with $x^*$ the center of
mass of $S_h$. This, by John's lemma, implies a bound in whole
$S_h$
$$u(x,t) \geq P(x) - C(n)\, h, \qquad \mbox{for all} \, \,\, x \in S_h, \,\,\, t \ge - c\, \frac{|S_h|^{2p}}{h^{np-1}}
,$$ with $C(n)$ depending only on the dimension.

\end{rem}

\begin{cor}\label{cor-p3} Assume that $u$ is a bounded subsolution of equation \pma
in the cylinder $Q_1:=B_1 \times [-1,0]$. Then, $u$ is uniformly
H\"older continuous in time $t$  on the cylinder $Q_{1/2}:=B_{1/2}
\times [- 1/2,0]$, namely $u \in C^{1,\beta}(Q_{1/2})$, with
$\beta = 1/(np+1)$.
\end{cor}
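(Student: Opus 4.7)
The plan is to fix $(x_0,t_0)\in Q_{1/2}$ and a past time $t<t_0$, set $\tau:=t_0-t$, and prove the upper bound $u(x_0,t_0)-u(x_0,t)\le C\tau^{\beta}$ (the opposite direction being trivial since $u$ is nondecreasing in $t$). Let $M:=\|u\|_{L^\infty(Q_1)}$. Since $u(\cdot,t_0)$ is convex with $|u|\le M$ and $x_0\in B_{1/2}$, there is a supporting plane $P(x)=u(x_0,t_0)+p\cdot(x-x_0)$ at $x_0$ with $|p|\le CM$, and I consider the section $S_h:=\{x\in B_1:u(x,t_0)<P(x)+h\}$, which is compactly contained in $B_1$ for $h$ small enough depending on $M$.

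The main geometric input is the volume lower bound $|S_h|\ge c(M)\,h^n$. The function $v:=u(\cdot,t_0)-P$ is nonnegative, convex, vanishes at $x_0$, and satisfies $v\le CM$ on $B_{1/2}(x_0)$. Convexity along the segment from $x_0$ to any $y\in\partial B_{1/2}(x_0)$ gives $v(x_0+\lambda(y-x_0))\le\lambda\,v(y)\le\lambda CM$ for $\lambda\in(0,1)$, whence $v(z)\le 2CM|z-x_0|$ on $B_{1/2}(x_0)$, so $S_h\supset B_{h/(2CM)}(x_0)$ and the volume bound follows. Next I invoke Proposition \ref{prop-p2} in the form of the remark following it, applied to the shifted subsolution $\tilde u(x,s):=u(x,t_0+s)-P(x)-h$ on $S_h$, which at $s=0$ equals $-h$ at $x_0$ and is $\le 0$ on $\partial S_h$. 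This yields $u(x_0,t_0+s)\ge u(x_0,t_0)-C(n)h$ whenever $|s|\le c\,|S_h|^{2p}/h^{np-1}$; the volume estimate makes this admissible range contain $|s|\le c\,h^{np+1}$. Choosing $h:=C\tau^{1/(np+1)}=C\tau^{\beta}$ makes $s=-\tau$ admissible and delivers $u(x_0,t_0)-u(x_0,t)\le Ch=C\tau^{\beta}$. Joint Hölder continuity on $Q_{1/2}$ then follows by combining this estimate with the Lipschitz bound in $x$ (of constant $\le CM$) coming from convexity and boundedness, together with $\beta<1$ to absorb the linear spatial bound into a Hölder one.

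The main obstacle is the volume estimate $|S_h|\ge c(M)\,h^n$, which is where the boundedness hypothesis is essential (it would fail, for instance, for a convex function exhibiting unbounded steepness). Once that is secured, the exponent $\beta=1/(np+1)$ arises algebraically by balancing $|S_h|^{2p}/h^{np-1}$ against $\tau$ in the propagation estimate of Proposition \ref{prop-p2}.
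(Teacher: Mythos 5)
Your overall strategy is the same as the paper's: use boundedness plus convexity to get a Lipschitz bound in $x$, apply Proposition \ref{prop-p2} on a small set around $(x_0,t_0)$ of volume $\gtrsim h^n$, and balance $|S|^{2p}/h^{np-1}$ against $\tau$ to get the exponent $\beta=1/(np+1)$. The one genuine flaw is the assertion that the tangent-plane section $S_h=\{u(\cdot,t_0)<P+h\}$ is compactly contained in $B_1$ for $h$ small depending only on $M$. This is false: boundedness gives no lower bound on $u(\cdot,t_0)-P$ away from $x_0$, and for a degenerate convex slice such as $u(x,t_0)=(x_1)_+^2$ (which does occur for bounded subsolutions when $n\ge 2$) the section at $x_0=0$ contains the slab $\{x_1<\sqrt h\}\cap B_1$ for every $h>0$. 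Without compact containment, $\partial S_h$ meets $\partial B_1$, where the hypothesis $\tilde u\le 0$ of Proposition \ref{prop-p2} (and of the remark following it) is not available, so the application of the proposition to $S_h$ is not justified as written.

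The repair is already contained in your own argument: apply Proposition \ref{prop-p2} not to $S_h$ but to the inscribed ball $B_{h/(2CM)}(x_0)$ you constructed for the volume bound. On its boundary $v\le h$, so $\tilde u\le 0$ there; it is compactly contained in $B_1$ for small $h$ since $x_0\in B_{1/2}$; it has $x_0$ as center of mass, so no appeal to John's lemma is needed; and $|B_{h/(2CM)}(x_0)|^{2p}/h^{np-1}\ge c(M)h^{np+1}$, so the rest of your computation is unchanged. This is precisely what the paper does, working in $B_h(x)$ with height $Mh$ rather than in a section, which is why the compactness issue never arises there.
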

\begin{proof}
Since $u$ is bounded on $Q_1$, the convexity of $u(\cdot,t)$
implies that $|\nabla u|$ is bounded by a constant $M$ in
$Q_{3/4}$. Then, by Proposition \ref{prop-p2} applied in $B_h(x)
$, with $x \in B_{1/2}$ and $h<1/4$, we have
\begin{equation}\label{eqn-ht} - 2M \, h \leq u(x,t) - u(x,0)
\leq  0   \qquad \mbox{\em if} \,\,\, - c\,
\frac{|B_h(x)|^{2p}}{h^{np-1}} \leq t \leq 0.
\end{equation}
Taking $t=- c_1\, h^{np +1}$, we find that for all $t$ small
enough
$$|u(x,t) -   u(x,0)|   \leq   C(M)  \,  t^{1/(np+1)} $$
from which the desired result readily follows.
\end{proof}

 As a consequence we obtain compactness of viscosity solutions.

\begin{cor}\label{cor-p4} A sequence of bounded solutions of \pma in $\om \times [-T,0]$
has a subsequence that converges uniformly on compact sets to a
solution of the same equation.
\end{cor}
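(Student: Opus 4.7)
The plan is to combine the space-time regularity estimates already established with the standard stability theory for viscosity solutions. Let $\{u_k\}$ be a sequence of bounded viscosity solutions of \pma in $\om \times [-T,0]$, with a uniform bound $\|u_k\|_{L^\infty} \le M$ on each compact subset $K \Subset \om \times [-T,0]$.

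First I would establish equicontinuity of $\{u_k\}$ on compact subsets. Since each $u_k(\cdot,t)$ is convex in $x$ and bounded by $M$ on a neighborhood of $K$, a standard convexity argument gives a uniform local Lipschitz bound in $x$: there exists $L=L(M,K)$ such that $|\D_x u_k| \le L$ on a slightly smaller compact set. Hölder continuity in $t$ follows directly from Corollary \ref{cor-p3}, which gives a uniform modulus $|u_k(x,t)-u_k(x,s)| \le C(M)\,|t-s|^{1/(np+1)}$ on $K$. Combining these, the family $\{u_k\}$ is uniformly bounded and uniformly equicontinuous on each compact subset of $\om \times [-T,0]$. Arzelà–Ascoli together with a diagonal extraction across an exhaustion by compact sets produces a subsequence (still denoted $u_k$) converging uniformly on compact subsets to a function $u$. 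The limit $u$ is automatically bounded, convex in $x$, and nondecreasing in $t$.

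Next I would verify that $u$ is a viscosity solution of \pma by the standard stability argument. Suppose a smooth test function $\varphi$ touches $u$ strictly from above at a point $(x_0,t_0)$ in the interior. By uniform convergence on a small closed parabolic neighborhood of $(x_0,t_0)$, there exist points $(x_k,t_k) \to (x_0,t_0)$ at which $u_k - \varphi$ attains a local maximum (after adding $\eps_k \to 0$ if needed to make the touching strict). The subsolution property of $u_k$ gives
\begin{equation*}
\varphi_t(x_k,t_k) \le \La \,(\det D^2 \varphi(x_k,t_k))_+^{\,p},
\end{equation*}
and passing to the limit using continuity of $\varphi_t$ and $D^2\varphi$ yields the same inequality at $(x_0,t_0)$. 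The supersolution inequality $\varphi_t \ge \la\,(\det D^2\varphi)^p$ is obtained analogously by touching from below. Hence $u$ satisfies \pma in the viscosity sense.

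The main obstacle is the stability step, because the right-hand side $(\det D^2 u)^p$ is only degenerate monotone and one must be slightly careful in how touching points are produced under uniform convergence and in how one handles the case where $D^2\varphi$ fails to be positive definite (in which case one must interpret the determinant as zero, i.e., use the proper viscosity convention for degenerate MA operators). Once this is set up correctly the passage to the limit is routine, since both sides of the inequality are continuous in the arguments of $\varphi$.
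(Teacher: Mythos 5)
Your argument is correct and is exactly the one the paper intends: the corollary is stated as an immediate consequence of Corollary \ref{cor-p3} (uniform H\"older continuity in $t$) combined with the uniform Lipschitz bound in $x$ coming from convexity and boundedness, followed by Arzel\`a--Ascoli and the standard stability of viscosity solutions under uniform convergence. The paper omits the details entirely, so your write-up, including the care taken with the degenerate determinant in the stability step, is a faithful and complete version of the intended proof.
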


Next we discuss the case of supersolutions.

\begin{lem}\label{lem-p5} Let $u$ be a viscosity supersolution in $S \subset B_1$ i.e.
$$u_t \geq \la \, (\det D^2 u)^p $$ with $$ \mbox{$u(x,0)\ge -1$ in $S$, \quad \quad $u(x,0)\ge 0$ on $\partial S$.}$$
Then
$$u(x,t) \ge -\frac{1}{2} \quad \mbox{for $t \ge C$},$$
with $C>0$ universal.
\end{lem}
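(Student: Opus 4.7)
The plan is to mirror Lemma \ref{lem-p1}, reversing the roles of sub- and supersolution: because $u$ is a viscosity supersolution bounded below by $-1$, the idea is to compare $u$ from below with an explicit classical subsolution built from a translated quadratic and to read off the value $-\tfrac12$ at a fixed universal time.

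Concretely, for any fixed $x_0\in S$, I plan to use the barrier
$$v(x,t):=b\,t+b'|x-x_0|^2-c,$$
with positive constants $b,b',c$ chosen so that: (i) $v$ is a classical subsolution, which since $D^2 v=2b' I$ reduces to $b\le \la (2b')^{np}$; (ii) $v(\cdot,0)\le -1\le u(\cdot,0)$ on $S$, which because $S\subset B_1$ forces $|x-x_0|\le 2$ and hence amounts to $4b'-c\le -1$; and (iii) $v\le 0\le u$ on $\partial S\times[0,T]$, using that $u\ge 0$ on $\partial S$ for all $t\ge 0$ (the initial hypothesis plus monotonicity in $t$), which amounts to $bT+4b'-c\le 0$. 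Choosing $b'=\tfrac18$, $c=\tfrac32$, $b=\la/4^{np}$ and $T=1/b=4^{np}/\la$ saturates all three constraints and yields $v(x_0,T)=bT-c=-\tfrac12$.

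The standard parabolic Monge--Amp\`ere comparison principle, applied between the smooth uniformly convex classical subsolution $v$ and the convex viscosity supersolution $u$ on $S\times[0,T]$, then gives $v\le u$ in the interior; evaluating at $(x_0,T)$ produces $u(x_0,T)\ge -\tfrac12$. Since $x_0\in S$ was arbitrary and $u$ is monotone in $t$, the conclusion follows with $C:=4^{np}/\la$. The one delicate step is the calibration of $b'$: conditions (ii) and (iii) combine to force $v(x_0,T)\le -4b'$, so $b'$ must be taken $\le\tfrac18$ for the target level $-\tfrac12$ (rather than the trivial $-1$) to be reachable at the final time; beyond this tuning, no essential obstacle is anticipated, as the scaling of the operator fixes the remaining constants.
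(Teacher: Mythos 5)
Your proof is correct and follows essentially the same route as the paper: comparison on $S\times[0,T]$ with an explicit quadratic-in-$x$, linear-in-$t$ classical subsolution lying below $u$ on the parabolic boundary. The paper simply uses the single barrier $w(x,t)=\tfrac12(|x|^2-1)+\la(t-C)$ centered at the origin (an exact solution of $w_t=\la(\det D^2w)^p$), which handles all points of $S$ at once and gives the cleaner constant $C=1/\la$, whereas your recentering at each $x_0$ costs the extra factor $4^{np}$ but changes nothing essential.
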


\begin{proof} The lemma follows by  comparison of our solution $u$ with the
function $$w(x,t)= \frac 12 \, (|x|^2 -1) + \la\, (t-C) $$ on the
cylinder $S \times [0,C]$. The function  $w$ is a solution of the
equation $w_t = \la (\det D^2 w)^p$ and, since $S \subset B_1$,
satisfies $w \leq 0$ on $\partial S(0) \times [0,C]$. In addition,
by choosing $C=1/ \la$, we have $w(x,0)\le -1 \le u(x,0)$ for $x
\in S$. The comparison principle implies $u(x,C) \ge w(x,C) \ge
-1/2$ in $S$.

\end{proof}

\begin{rem}
We can replace $-1/2$ by $-\delta$ in the lemma above by taking
$C=C(\delta)$ depending also on $\delta$.
\end{rem}

\begin{rem}{\label{rem-r1}}
If we assume that $S$ is $d$-balanced around $0$ and $u(0,0)=-1$,
$u(x,0)=0$ on $\partial S$, then the same conclusion holds by
taking $C=C(d)$ depending also on $d$. Indeed, in this case we
obtain $u(x,0) \ge -C(d)$ for all $x \in S$ and the desired
conclusion follows as before.
\end{rem}

The scaling of the equation and the previous  lemma give the
following:

\begin{prop}\label{prop-p6} Let $u$ be a supersolution in $\om$
 and assume $$u(x,0)\ge 0, \quad \mbox{ and $S_h :=\{ u(x,0)<h\} \subset \subset \om
 $.}$$
 Then
$$u(\cdot,t) \geq \frac h2, \qquad  \mbox{for} \quad t \ge C \, \frac{|S_h|^{2p}}{h^{np-1}},$$
with $C$ universal.

\end{prop}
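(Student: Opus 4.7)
The plan is to reduce the statement to Lemma~\ref{lem-p5} by normalizing $S_h$ via John's lemma and invoking the affine scaling of the equation, in direct analogy with the derivation of Proposition~\ref{prop-p2} from Lemma~\ref{lem-p1}.

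First I would let $x_0$ be the center of mass of the convex set $S_h$. John's lemma furnishes a linear map $A$ with
\begin{equation*}
B_1 \subset A^{-1}(S_h - x_0) \subset B_{C_n}, \qquad c(n)\,|S_h| \le \det A \le C(n)\,|S_h|.
\end{equation*}
Setting $m := (\det A)^{2p}/h^{np-1}$, I would introduce
\begin{equation*}
v(y,s) := \frac{1}{h}\bigl(u(x_0 + Ay,\, ms) - h\bigr),
\end{equation*}
and verify by a direct computation (using $\det D^2 v = h^{-n}(\det A)^2 \det D^2 u$ and $v_s = (m/h)\,u_t$) that $v$ is a viscosity supersolution of the same equation, with the same ellipticity constant $\la$, on the normalized set $\tilde S := A^{-1}(S_h - x_0)$. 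By construction $v(\cdot,0) \ge -1$ in $\tilde S$, $v(\cdot,0) = 0$ on $\partial \tilde S$, and $\tilde S$ is $C_n$-balanced around $0$.

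Next I would apply the $d$-balanced version of Lemma~\ref{lem-p5} supplied by Remark~\ref{rem-r1} with $d = C_n$ (equivalently, perform the secondary dilation $z = y/C_n$ to land in $B_1$ and apply the lemma directly, at the price of a universal rescaling of $\la$ by the factor $C_n^{-2np}$). Either route yields a universal time $s_0$ with $v(\cdot,s) \ge -1/2$ on $\tilde S$ for all $s \ge s_0$. Unscaling gives $u(x,t) \ge h/2$ for every $x \in S_h$ as soon as $t = ms \ge s_0(\det A)^{2p}/h^{np-1}$, and the two-sided comparison $\det A \asymp |S_h|$ then converts this into the stated waiting time $t \ge C|S_h|^{2p}/h^{np-1}$ with $C$ universal. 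Outside $S_h$ the bound $u(\cdot,t) \ge h/2$ is automatic from $u(\cdot,0) \ge h$ on $\om \setminus S_h$ together with the monotonicity of $u$ in $t$.

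No genuine obstacle arises; the only subtlety is that John's lemma normalizes $S_h$ into $B_{C_n}$ rather than $B_1$, so Lemma~\ref{lem-p5} does not apply verbatim. This is cleanly absorbed either by Remark~\ref{rem-r1} or by the dilation $z = y/C_n$, keeping every emerging constant universal.
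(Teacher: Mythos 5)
Your proof is correct and follows essentially the same route as the paper: normalize $S_h$ by an affine map from John's lemma and use the scaling of the equation to reduce to Lemma~\ref{lem-p5}. The paper sidesteps your only ``subtlety'' by choosing $A$ so that $A^{-1}S_h \subset B_1$ directly (with $\det A \le C(n)|S_h|$, which is all the conclusion needs), so Lemma~\ref{lem-p5} applies verbatim without Remark~\ref{rem-r1} or a secondary dilation.
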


\begin{proof} Let $A$ be a linear transformation such that $A^{-1} S_h \subset B_1$ so that
$\det A  \le C(n)|S_h|$. We then apply the previous lemma to the
re-scaled solution
$$v_h=\frac 1h u(Ax,m t)-1, \qquad m = \frac{(\det A)^{2p}}{h^{np-1}}.$$
\end{proof}

\begin{rem}\label{rem-p6}
In view of Remark \ref{rem-r1} we obtain a version of Proposition
\ref{prop-p6} for sections $S_h=S_h(x_0,t_0)$ which are
$d$-balanced around $x_0$ and are compactly included in $\Omega$,
and conclude that
$$u(x_0,t)\ge u(x_0,t_0)+ (1-\delta)h \quad \mbox {for} \quad  t \ge t_0+C(\delta,d)\frac{|S_h|^{2p}}{h^{np-1}}.$$
\end{rem}

\section{Separation from constant solutions}

In this section we consider the case when the solution $u$ at the
initial time $t=0$ is above a given function $w$ depending only on
$n-1$ variables, $u$ and $w$ coincide at the origin, and $u>w$ on
$\partial \om$. We investigate whether $u$ separates from $w$
instantaneously for positive times, i.e $u(0,t)>w(0)$ for all
$t>0$. Of particular interest is the case of angles given by
$w=|x_n|$.

Throughout this section we assume that $u(x,0)\ge 0.$
 For $h >0$ we will consider  the sub-level set $S_h(t)$ of our solution $u(\cdot,t)$
in $\om$ which is defined as
$$S_h(t) := \{ x\in \om: \,\, u(x,t) < h \}.$$
We will also consider balls $B'_\rho \subset \R^{n-1}$, namely
$$B'_\rho := \{ x'=(x_1,\cdots,x_{n-1}) \in  \R^{n-1}: \,\, |x'| < \rho \, \}.$$

\begin{prop}\label{prop-p8}
Let  $u$ be  a supersolution in $\om \times [0,T]$ with $u \geq 0$
at $t=0$. Assume that $S_h(0) \cap \{x_n < 2\, \beta \}$ is
compactly included in $\om$ and is included as well in the
cylinder $ \{0<x_n <2 \beta\} \times S'$ for a bounded domain $S'
\subset \R^{n-1}$ and two positive constants $h
>0, \beta
>0$. Then,
$$S_{h/4}(t_0) \subset \{x_n > \beta  \},
\qquad \mbox{for}\, \,\, t_0 = C \, \frac{(\beta \,
|S'|)^{2p}}{h^{np-1}},$$ with $C$ universal.

\end{prop}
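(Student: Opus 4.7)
The plan is to reduce to a normalized cylinder by an affine change of spatial variables, construct an explicit convex quadratic subsolution barrier, and apply parabolic comparison on the convex set $D := \sho \cap \{x_n<2\beta\}$, which is compactly included in $\om$ by hypothesis. Replacing $S'$ by the projection of $D$ onto $\R^{n-1}$, we may assume without loss of generality that $S'$ is convex.

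By John's lemma applied to $S'$, after a translation in $x'$ there is a linear map $T$ of $\R^{n-1}$ with $B'_1 \subset T^{-1}S' \subset B'_{c_{n-1}}$, so that $\det T$ is comparable to $|S'|$. Combining with the dilation $x_n = \beta(y_n+1)$ yields an affine map $A:\R^n\to\R^n$ with $\det A \sim \beta|S'|$ identifying the model cylinder $\tilde R := (-1,1)\times\tilde S'$ (where $\tilde S' := T^{-1}S'$ satisfies $B'_1\subset\tilde S'\subset B'_{c_{n-1}}$) with $(0,2\beta)\times S'$. Setting
$$v(y,s) := \frac{1}{h}\,u(Ay,ms), \qquad m := \frac{(\det A)^{2p}}{h^{np-1}},$$
the scaling of the Monge--Amp\`ere equation from Section 3 shows that $v$ is a supersolution of an equation of the same type in $(y,s)$. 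Combining the hypothesis with the monotonicity of $u$ in $t$, we have $v(\cdot,0)\ge 0$ on $\tilde R$, and $v(y,s)\ge 1$ for every $s\ge 0$ whenever $Ay\in\partial\sho\cap\{x_n<2\beta\}$; no useful lower bound on $v$ is available over the top face $\{y_n=1\}$.

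The heart of the argument is the convex quadratic barrier
$$w(y,s) := A_0\bigl[|y'|^2 + \gamma(y_n-2)^2\bigr] + \mu s - C_0,$$
with universal constants $\gamma := c_{n-1}^2$, $A_0 := 1/(8c_{n-1}^2)$, $C_0 := 5/4$, and $\mu := \la(2^n A_0^n\gamma)^p$, so that $w_s = \mu = \la(\det D^2 w)^p$ and $w$ is a classical (hence viscosity) subsolution. The center $(0,\ldots,0,2)$ of the quadratic is placed \emph{above} the cylinder $\tilde R$ so that $w$ is largest near the bottom face $\{y_n=-1\}$ and smallest on the top face $\{y_n=1\}$; the anisotropy $\gamma\sim c_{n-1}^2$ balances the disparity between the $y'$- and $y_n$-extents of $\tilde R$. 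A direct computation with $s_0:=1/\mu$ yields $w(\cdot,0)\le 0$ and $w(\cdot,s_0)\le 1$ throughout $\tilde R$, $w(\cdot,s_0)\le 0$ on $\{y_n=1\}\cap\tilde R$, and $w(y,s_0)\ge 1/4$ whenever $y\in\tilde R$ with $y_n\le 0$.

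Applying the parabolic comparison principle on $A^{-1}D\times[0,s_0]$ --- using $v\ge 0\ge w$ at $s=0$, $v\ge 1\ge w$ on $\partial(A^{-1}D)\cap\{y_n<1\}$, and $v\ge 0\ge w$ on $\partial(A^{-1}D)\cap\{y_n=1\}$ --- yields $w\le v$ on $A^{-1}D$, and hence $v(y,s_0)\ge 1/4$ for all $y\in A^{-1}D$ with $y_n\le 0$. For $y\in\tilde R\setminus A^{-1}D$, $v(y,0)\ge 1$ directly, and for $y'\notin\tilde S'$ with $y_n\le 0$ the hypothesis already forces $u\ge h$ at time $0$. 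Undoing the rescaling gives $u(x,t_0)\ge h/4$ for all $x\in\om$ with $x_n\le\beta$, where $t_0 := ms_0\le C\,(\beta|S'|)^{2p}/h^{np-1}$. The main obstacle is the asymmetric boundary information --- $u\ge h$ on the side and bottom of the cylinder but only $u\ge 0$ on the top --- which is what forces the off-centered, anisotropic paraboloid barrier rather than a Caffarelli-type centered paraboloid.
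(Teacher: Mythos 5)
Your proof is correct, but it takes a different route from the paper's. The paper's argument is a three-line reduction: it adds the linear function $\tfrac{h}{2\beta}x_n$ to $u$, observes that the $h$-sublevel set of the tilted function $\tilde u$ at $t=0$ is trapped in the slab $\{0<x_n<2\beta\}$ and hence has volume at most $2\beta\,|S'|$, applies the already-established supersolution propagation estimate (Proposition \ref{prop-p6}, in the $\delta=1/4$ version) to get $\tilde u(\cdot,t_0)\ge \tfrac34 h$, and then subtracts the tilt back to conclude $u\ge \tfrac h4$ on $\{x_n\le\beta\}$. The linear tilt is precisely the device that converts the asymmetric boundary information (only $u\ge 0$ on the top face $\{x_n=2\beta\}$, versus $u\ge h$ elsewhere on $\partial D$) into the symmetric setting of Proposition \ref{prop-p6}. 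You instead handle that asymmetry head-on by rebuilding the barrier argument from scratch with an off-centered, anisotropic paraboloid whose vertex sits above the cylinder; your choice of constants ($\gamma=c_{n-1}^2$, $A_0=1/(8c_{n-1}^2)$, $C_0=5/4$) does check out, giving $w\le 0$ at $s=0$, $w\le 1$ on $\tilde R$ and $w\le 0$ on the top face for all $s\le s_0$, and $w\ge 1/4$ on $\{y_n\le 0\}$ at $s=s_0$, and your treatment of the points outside $A^{-1}D$ is also correct. What the paper's route buys is economy and reuse of Propositions \ref{prop-p2}--\ref{prop-p6} (which is the pattern used repeatedly later, e.g.\ in Corollary \ref{cor-p9} and Proposition \ref{lem-p11}); what your route buys is a self-contained argument that makes explicit exactly how the one-sided boundary data forces the barrier geometry. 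Note that your observation about the tilt being unavailable is not quite right --- it is available, and it is what the paper uses.
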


\begin{proof}
We apply Proposition \ref{prop-p2} for $$\tilde u=u+\frac{h}{2
\beta}x_n$$ and see that $\tilde u \ge u \ge 0$. Also $\{\tilde
u(x,0)< h\}$ is compactly included in $\om$ and is included in $
\{0<x_n <2 \beta\} \times S'$. We conclude that $\tilde u(x,t_0)
\ge \frac{3}{4}h$ with $t_0$ given above. This implies that if
$x_n \le \beta$ then $u(x,t_0) \ge \tilde u(x,t_0) -\frac{h}{2}
\ge \frac h4$ hence $S_{h/4}(t_0) \subset \{x_n > \beta \}$.
\end{proof}

From Proposition \ref{prop-p8} we obtain the following corollary.

\begin{cor}\label{cor-p9} Let  $u$ be  a supersolution in $\om \times [0,T]$
and assume that $$u(x,0) \geq w(x') \geq 0,\quad \quad
u(0,0)=w(0)=0, \quad u(x,0)>0 \quad \mbox{on $\partial \om$,}$$
for a function $w$ defined on $\R^{n-1}$. Suppose that $w$
satisfies
\begin{equation}\label{eqn-limit}
\frac{a_{h_j}^{2p}}{h_j^{np-1}} \to 0, \qquad \mbox{for a
sequence}\,\, h_j \to 0.
\end{equation}
with $$a_{h}:=|\{ w(x')<h\}\cap \pi_n(\Omega)|, \quad \quad
\mbox{where $\pi_n(x):=x'.$} $$ Then,
$$u(0,t) > 0 \qquad \mbox{for any}\,\, t >0.$$
\end{cor}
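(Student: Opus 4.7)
The plan is to reduce Corollary~\ref{cor-p9} to a direct application of Proposition~\ref{prop-p6}, once we control the volume of the initial sub-level set $S_h(0)$ in terms of $a_h$. The key observation is that, because $u(x,0) \ge w(x')$, we have $S_h(0) \subset \{x \in \om : w(x') < h\}$; since $\om$ is bounded it lies in a strip $\R^{n-1} \times [-M,M]$, so Fubini yields $|S_h(0)| \le 2M\, a_h$.

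First I would verify that, for all sufficiently small $h$, the sub-level set $S_h(0)$ is compactly contained in $\om$. The hypothesis $u(\cdot,0) > 0$ on $\partial \om$, together with the lower semicontinuity of the supersolution, supplies $\eta > 0$ with $u(\cdot,0) \ge \eta$ in a neighbourhood of $\partial \om$, whence $S_h(0) \subset\subset \om$ as soon as $h < \eta$. Next, because $u(0,0) = w(0) = 0$, the origin belongs to $S_h(0)$, so Proposition~\ref{prop-p6} applied at the origin yields
\[
u(0,t) \ge \frac{h}{2} \qquad \text{whenever} \qquad t \ge C\,\frac{|S_h(0)|^{2p}}{h^{np-1}} \le C\,(2M)^{2p}\,\frac{a_h^{2p}}{h^{np-1}}.
\]

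To finish, fix an arbitrary $t > 0$ and use hypothesis~\eqref{eqn-limit} to select an index $j$ so large that $h_j < \eta$ and $C(2M)^{2p} a_{h_j}^{2p}/h_j^{np-1} \le t$. The displayed estimate then gives $u(0,t) \ge h_j/2 > 0$, which is the desired conclusion.

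I do not anticipate a serious obstacle. Once one recognizes that the inequality $u \ge w$ converts the $(n-1)$-dimensional smallness condition~\eqref{eqn-limit} on $w$ into the $n$-dimensional bound $|S_h(0)| \le C\, a_h$, the statement reduces at once to Proposition~\ref{prop-p6}, and the hypothesis~\eqref{eqn-limit} is calibrated precisely so that the waiting time produced by Proposition~\ref{prop-p6} vanishes along the sequence $h_j \to 0$. The only mild technical point is extracting the compactly-included hypothesis from the strict positivity of $u(\cdot,0)$ on $\partial \om$.
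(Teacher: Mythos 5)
Your proof is correct. It is essentially the same argument as the paper's: both reduce the corollary to the waiting-time estimate of Section 3 by observing that, since $u(\cdot,0)\ge w(x')$ and $\om$ is bounded, the volume of the sub-level set $S_h(0)$ is controlled by $a_h$ times the thickness of $\om$ in the $x_n$ direction, so that hypothesis \eqref{eqn-limit} makes the waiting time vanish along $h_j\to 0$. The only (cosmetic) difference is the intermediate proposition invoked: the paper applies Proposition \ref{prop-p8} (the tilted version, obtained by adding $\frac{h}{2\beta}x_n$), which only pushes $S_{h_j/4}(t_j)$ past the hyperplane through the origin and yields $u(0,t_j)\ge h_j/4$, whereas you apply Proposition \ref{prop-p6} directly after the Fubini bound $|S_h(0)|\le 2M\,a_h$, which gives the (slightly stronger, but equally sufficient) conclusion $u(\cdot,t)\ge h_j/2$ everywhere. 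Your handling of the compact inclusion $S_h(0)\subset\subset\om$ for small $h$ matches the level of detail in the paper.
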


\begin{proof}
Let $h >0$ be small such that the sub-level sets $S_h(0)$ of $u$
is compactly supported in $\om$. Since $u \ge w$ we obtain that
$$S_h(0) \subset (\{ w(x')<h\}\cap \pi_n(\om)) \times [b, \infty),$$ for some $b <0$ (since $0 \in S_h(0)$). We apply
Proposition \ref{prop-p8} for $h_j \le h$ (hence $S_{h_j}(0)
\subset S_h(0)$), with $\beta=-b$. We conclude that
$$S_{h_j/4}(t_j) \subset \{x_n>0\},
\quad \quad t_j=C\beta^{2p}\frac{a_{h_j}^{2p}}{h_j^{np-1}},$$ and
obtain $u(0,t_j)\ge h_j/4>0$ for a sequence $t_j \to 0$.
\end{proof}

\begin{rem} If $p > 1/2$ and the sequence above is bounded, then the conclusion of Corollary \ref{cor-p9} still holds true.
\end{rem}

Next we investigate the case when $w$ is identically $0$.

\begin{prop}\label{lem-p10} Let  $u$ be  a supersolution in $\om \times [0,T]$ with $p \leq 1/n$.
Assume that $u \geq 0$ at $t=0$ and $u(x,0) >0$ on $\partial \om$.
Then, $u >0$ in $\om$ for any $t >0$.
\end{prop}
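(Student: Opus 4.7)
The plan is to reduce Proposition \ref{lem-p10} to Corollary \ref{cor-p9} applied with the trivial barrier $w \equiv 0$. Fix an arbitrary $x_0 \in \om$ and $t > 0$; by monotonicity in time we may assume $u(x_0, 0) = 0$, since otherwise $u(x_0, t) \ge u(x_0, 0) > 0$. After translating $x_0$ to the origin, the hypotheses of the corollary with $w \equiv 0$, namely $u(\cdot, 0) \ge w \ge 0$, $u(0, 0) = w(0) = 0$, and $u(\cdot, 0) > 0$ on $\partial \om$, are all immediate from the hypotheses of the proposition. It remains only to verify the limit condition \eqref{eqn-limit}.

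With $w \equiv 0$ the set $\{w(x') < h\}$ is the whole of $\R^{n-1}$ for every $h > 0$, so $a_h = A := |\pi_n(\om)|$ is a finite positive constant independent of $h$. Thus
\[
\frac{a_h^{2p}}{h^{np-1}} \;=\; A^{2p}\, h^{1-np}.
\]
When $p < 1/n$ the exponent $1-np$ is strictly positive, so this quantity tends to $0$ as $h \to 0^+$ and \eqref{eqn-limit} holds along any sequence $h_j \to 0$. Corollary \ref{cor-p9} then delivers $u(0, t) > 0$, which is the desired conclusion.

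The main obstacle I expect is the critical endpoint $p = 1/n$, where $1 - np = 0$ and the ratio collapses to the positive constant $A^{2/n}$, so Corollary \ref{cor-p9} is not directly applicable. Here I would bypass the corollary and argue via Proposition \ref{prop-p6} itself: the boundary bound $u(\cdot, 0) \ge h_0$ on $\partial \om$ (valid for some $h_0 > 0$ by compactness) makes every $S_h(0) = \{u(\cdot, 0) < h\}$ with $h < h_0$ compactly contained in $\om$, and the proposition forces $u(x_0, t) \ge h/2$ as soon as $t \ge C |S_h(0)|^{2/n}$. When the initial zero set $\{u(\cdot, 0) = 0\}$ has empty interior, $|S_h(0)| \downarrow 0$ as $h \downarrow 0$ and a sufficiently small $h$ can be chosen for any prescribed $t > 0$. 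The delicate remaining sub-case, where the initial zero set has positive measure, would be handled by exploiting the pure spatial scaling of the equation at $p = 1/n$: since $np - 1 = 0$, the rescaling $v(y, \tau) = h^{-1} u(Ay, \tau)$ preserves both the equation and the time variable, so one can iterate a localized version of Proposition \ref{prop-p6} on smaller and smaller subdomains until the relevant section measure is made arbitrarily small, and the pointwise conclusion $u(x_0, t) > 0$ then follows by taking $h \to 0$.
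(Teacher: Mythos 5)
Your reduction of the subcritical case $p<1/n$ to Corollary \ref{cor-p9} with $w\equiv 0$ is exactly the paper's argument and is correct. Your treatment of the critical case $p=1/n$, however, has a genuine gap precisely in the sub-case that matters. When the zero set $\{u(\cdot,0)=0\}$ is lower-dimensional, your use of Proposition \ref{prop-p6} works (the set is convex, so empty interior gives measure zero, $|S_h(0)|\downarrow 0$, and since $h^{np-1}=1$ the waiting time $C|S_h(0)|^{2/n}$ can be made $\le t$). But the remaining sub-case — a zero set with nonempty interior, i.e.\ a genuine flat side — is the whole point of the proposition at $p=1/n$, and your sketch for it does not contain a working mechanism. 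Iterating Proposition \ref{prop-p6} cannot shrink anything, because each application requires waiting a time $C|S_h|^{2/n}\ge C|\{u(\cdot,0)=0\}|^{2/n}>0$, bounded below independently of $h$; and ``localizing to smaller subdomains'' is circular, since Proposition \ref{prop-p6} needs $S_h\subset\subset\om'$, i.e.\ positive data on $\partial\om'$, which is what you are trying to prove. The scaling remark that $np-1=0$ preserves the time variable is true but by itself produces no gain.

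The paper's proof supplies the two missing ingredients. First, the localization is done not by restricting the domain but by tilting: Proposition \ref{prop-p8} applies Proposition \ref{prop-p2} to $\tilde u=u+\frac{h}{2\beta}x_n$, whose $h$-sublevel set is confined to a slab $\{0<x_n<2\beta\}\times S'$, so only the cross-sectional volume $|S'|$ enters the time estimate. Second, there is a quantitative gain from the geometry of the ball: if $S_h(0)\subset B_\rho$, the slab of width $2\beta$ at a boundary point of $B_\rho$ has cross-section $B'_{2\sqrt\beta}$, so one step costs time $t_0=C(\beta\cdot\beta^{(n-1)/2})^{2/n}=C\beta^{1+1/n}$ and shrinks the section (at height $h/4$) into $B_{\rho-\beta}$. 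Running $k=1/\beta$ steps empties the section after total time $Ck\beta^{1+1/n}=C\beta^{1/n}$, which tends to $0$ as $\beta\to 0$; this is what lets you beat any prescribed $t>0$. Without the exponent $1+\frac1n>1$ per step the iteration would consume a fixed amount of time and the argument would fail. You would need to reproduce this (or an equivalent) estimate to close your proof.
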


\begin{proof} For $p < 1/n$ the proposition follows from Corollary \ref{cor-p9}.

Let  $p = 1/n$. Assume that for $h >0$ small we have $S_h(0)
\subset B_\rho$ for some $\rho $ in $0 < \rho \leq 1$, and
$S_h(0)$ is compactly supported in $\Omega$. We first show that
for $\beta \in (0,\rho]$ small, we have
\begin{equation}\label{eqn-p10}
S_{h/4} (t_0) \subset B_{\rho-\beta}(0), \qquad \mbox{for} \,\,
t_0 = C\, \beta^{1+\frac 1n}.
\end{equation}
To this end, we will apply Proposition \ref{prop-p8} for each $x_0
\in \partial B_\rho$ in the direction $(-x_0)$. Let us assume for
simplicity that $x_0=(0,\cdots,0, -\rho)$. Then, since $S_h(0)
\subset B_\rho$, we have
$$S_h(0) \cap
\{ -\rho < x_n < -\rho+2\beta \} \subset B'_{2\sqrt{\beta}} \times
(-\rho,-\rho+2\beta).$$
Applying Proposition \ref{prop-p8}, we
obtain that
$$S_{h/4}(t_0) \subset \{ x_n > -\rho+\beta\}, \qquad \mbox{for}\, \,\, t_0 = C \, (\beta \, \beta^{\frac{n-1}{2}})^{2/n}.$$
and \eqref{eqn-p10} readily follows.

We will now use \eqref{eqn-p10} to show that $u>0$ for $ t >0$.
Let $t
>0$ and fixed. Choose $\beta := 1/k >0$ with $k$ the smallest
integer so that $C \, \beta^{\frac 1n} \leq  t$, with $C$ the
constant from \eqref{eqn-p10}. Using this $\beta$ we repeat the
argument above $k$ times, starting at $\rho =1$, to conclude that
$$S_{h/4^k}(t_0) \subset B_{1-k\, \beta}, \qquad \mbox{for} \,\, t_0 = C\, k\, \beta^{1+\frac 1n}.$$
This shows that $S_{h/4^k}(t_0) = \emptyset$, for $t_0 =
C\beta^{\frac 1n} \leq t$ hence $u(\cdot,t) \geq h/4^k
>0$.
\end{proof}

\begin{rem}
For $p>1/n$ there exist radial solutions with a flat side that
persists for some time.
\end{rem}

\begin{rem}
In the proof we showed in fact that if $u\ge 0$, $u(x,0) \ge h$ on
$\partial B_1$ then $$u(\cdot , t) \ge he^{-Ct^{-n}}$$ for some
$C$ universal.
\end{rem}

In the next results we investigate the case of angles i.e when
$w(x)=|x_n|$. First proposition shows that $u$ separates instantly
from the edge of the angle when the exponent $p \leq \frac 1 {
n-2}$. The second proposition shows that this is not the case when
$p > \frac 1{n-2}$.

\begin{prop}\label{lem-p11} Assume $u$ is a supersolution,
and  $p \leq \frac 1{n-2}$. If $u(x,0) \geq |x_n|$ and $u(x,0)
>0$ on $\partial \om$, then $u >0$ for any $t >0$.
\end{prop}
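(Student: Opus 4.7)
The conclusion reduces to proving $u(0,t)>0$ for every $t>0$. Since $u$ is monotone in $t$ and $u(\cdot,0)>0$ on $\partial\om$, the potential vanishing points at positive times all lie in $\{u(\cdot,0)=0\}$, which by the angle bound sits inside $\{x_n=0\}\cap\om$; a translation in $x'$, which commutes with $|x_n|$, lets us place such a point at the origin.

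The plan is to invoke Corollary \ref{cor-p9} with $w(x)=|x_n|$. Because that corollary takes $w$ to be a function of the first $n-1$ coordinates, I would first relabel so that the angle direction sits among them. The corollary's key quantity then becomes
\[
a_h \;=\; \bigl|\{|x_n|<h\}\cap\pi_n(\om)\bigr| \;\le\; Ch,
\]
since a slab of thickness $2h$ intersects the bounded convex set $\pi_n(\om)$ in measure $O(h)$. Hence
\[
\frac{a_h^{2p}}{h^{np-1}} \;\le\; C\,h^{\,1-(n-2)p},
\]
which tends to $0$ along $h\to 0$ whenever $p<1/(n-2)$. In this subcritical range Corollary \ref{cor-p9} immediately yields $u(0,t)>0$ for every $t>0$.

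The hard part is the critical exponent $p=1/(n-2)$, where the ratio above is merely bounded. When $n=3$ we have $p=1>1/2$ and the remark following Corollary \ref{cor-p9} still closes the argument. For $n\ge 4$ (so $p\le 1/2$) I would work directly with the auxiliary supersolution $v(x,t)=u(x,t)-x_n$, permissible since the equation is unchanged by adding a linear function of $x$. One has $v(\cdot,0)\ge\max(0,-2x_n)\ge 0$, so $\{v(\cdot,0)<h\}\subset\{x_n>-h/2\}$. In shifted coordinates $y_n=x_n+h/2$, a single application of Proposition \ref{prop-p8} with $\beta=h$ pushes this sub-level set past the origin and yields $u(0,t_0)\ge h/4$ with $t_0\sim h^{1-(n-2)p}$. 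At the critical exponent this $t_0$ is a fixed positive constant, so one step only forces $u(0,t)>0$ for $t$ bounded below; reaching arbitrarily small $t$ will require iterating the push at geometrically decreasing scales, choosing a sequence $(h_k,\beta_k)$ so that the cumulative push $\sum_k\beta_k$ clears the slab while $\sum_k\Delta t_k\to 0$. This iteration, which is forced by the slab geometry $|S'|\sim h$ (the analogue of the $|S'|\sim\beta^{(n-1)/2}$ bound in the round geometry of Proposition \ref{lem-p10}), is the principal delicacy of the critical case.
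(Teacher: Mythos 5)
Your subcritical case is correct and is exactly the paper's argument (the paper's opening line ``$p>\frac{1}{n-2}$'' is a typo for $p<\frac{1}{n-2}$): $a_h\le Ch$ gives $a_h^{2p}/h^{np-1}\le Ch^{1-(n-2)p}\to 0$ and Corollary \ref{cor-p9} applies. The initial reduction to points on $\{x_n=0\}$ is also fine.

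The critical case $p=\frac{1}{n-2}$, $n\ge 4$, however, has a genuine gap, and the mechanism you propose cannot be repaired as stated. Tilting to $v=u-x_n$ and pushing along $e_n$ with $\beta\sim h$ uses a cross-section $S'$ that is the full $x'$-projection of the sub-level set, so $|S'|\sim 1$ and each push costs $\Delta t\sim\beta^{2p}/h^{np-1}\cdot 1\sim h^{1-(n-2)p}=O(1)$ at criticality. Moreover a push with $\beta\ll h$ gains nothing, because $v\ge -2x_n$ already forces $\{v<h/4\}\subset\{x_n>-h/8\}$ automatically; any \emph{useful} push needs $\beta\sim h$ and hence costs a fixed positive time, so $\sum_k\Delta t_k$ cannot tend to $0$. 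Your fallback with ``$|S'|\sim h$'' (pushing in an $x'$ direction but crediting only the thinness in $x_n$) fares no better: it gives $\Delta t\sim(\beta h)^{2p}/h^{np-1}=\beta^{2p}$ with $2p=\frac{2}{n-2}\le 1$ for $n\ge4$, and clearing $B_1'$ requires $\sum\beta_k\gtrsim1$, hence $\sum\beta_k^{2p}\gtrsim1$ as well. The missing idea is that both gains occur simultaneously: the paper keeps the origin fixed, notes $S_h(0)\subset B_1'\times[-h,h]$, and pushes inward from each $x_0'\in\partial B_1'$; the relevant $(n-1)$-dimensional cross-section is then $[-h,h]\times B''_{2\sqrt{\beta}}$ with $B''$ an $(n-2)$-dimensional ball, so $|S'|\sim h\,\beta^{(n-2)/2}$. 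At $p=\frac1{n-2}$ the factor $h^{2p}$ exactly cancels $h^{np-1}$ and $\Delta t\sim\beta^{np}=\beta^{\frac{n}{n-2}}$ has exponent strictly greater than $1$; iterating $k$ times with $\beta=1/k$ takes total time $\sim k^{-\frac{2}{n-2}}\to0$, which closes the argument for all $n\ge3$ at once (so the appeal to the unproven remark after Corollary \ref{cor-p9} for $n=3$ is also unnecessary).
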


\begin{proof} If $p>\frac {1}{n-2}$ then the proposition follows from Corollary \ref{cor-p9} since $a_h \le Ch$.

Let $p=\frac{1}{n-2}$. Since $u\ge |x_n|$ we may assume without
loss of generality that $S_h(0) \subset B_1' \times [-h,h]$. For
each $x_0' \in B_1'$ we apply Proposition \ref{prop-p8} in the
direction $(-x_0')$, in a manner similar to that used in
Proposition \ref{lem-p10}, to show that
$$S_{\frac h4}(t_0) \subset B'_{1-\beta} \times [- h/4, h/4],  \qquad \mbox{for} \,\, t_0=C\, \frac{(\beta\, |S'|)^{2p}}{h^{np-1}}.
$$
Notice that this time $|S'| = \, h\, |B''_{2\, \sqrt{\beta}}|$,
where $B''_r$ is an $n-2$ dimensional ball, hence (since $p=\frac
{1}{n-2}$)
$$t_0 \ge C\, \frac{( h\, \beta^{\frac {n}2})^{2p}}{h^{np-1}}=C \beta^\frac {n}{n-2}.$$
Now the proof continues as in the proof of Proposition
\ref{lem-p10} and we obtain $$u(\cdot, t)\ge
he^{-Ct^{-\frac{n-2}{2}}}.$$
\end{proof}

\begin{prop}\label{prop-p12} There exists a non-trivial solution $u$ of equation
\begin{equation}\label{eqn-pma3}
u_t=(\det D^2 u)^p, \qquad \mbox{on}\,\, \R^n \times [0,\infty)
\end{equation}
for which $u(x,0)\geq |x_n|$ and $u(0,t)=0$, for all $t \in
[0,\delta]$, for some $\delta >0$.
\end{prop}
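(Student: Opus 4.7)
The plan is to construct the desired $u$ explicitly as a maximum of two viscosity solutions. Let $c>0$ and $c_1 := (2c)^{np}$, and note that $W(x,t) := c|x|^2 + c_1 t - 1$ is a classical solution of $u_t = (\det D^2 u)^p$ (since $D^2W = 2cI$, so $(\det D^2W)^p = c_1 = W_t$). Since $|x_n|$ is also a viscosity solution of the same equation (as remarked earlier: $\det D^2 |x_n| = 0$ off $\{x_n=0\}$, no smooth function touches it from above at an edge point, and any smooth convex touching from below must have $D^2_{x'}\varphi = 0$ by the $x'$-maximum principle), I will set
\[
u(x,t) := \max\{|x_n|,\ W(x,t)\}
\]
and verify the required conclusions.

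The elementary properties are immediate: $u$ is convex in $x$ as a maximum of two convex functions; $u(x,0) = \max\{|x_n|,\ c|x|^2 - 1\} \ge |x_n|$; and $u(0,t) = \max\{0,\ c_1 t - 1\} = 0$ for $t \in [0, 1/c_1]$, so one may take $\delta := 1/c_1$. Nontriviality will be clear because $u = W$ for $|x|$ large, which depends genuinely on $t$.

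The main step will be to check that $u$ is a viscosity solution, not merely a subsolution. At points where one of $|x_n|, W$ strictly dominates, $u$ coincides locally with that viscosity solution and inherits the property. The delicate points lie on the seam $\Sigma := \{|x_n|=W\}$. At a generic seam point $(x_0,t_0)$ with $x_{0,n} \neq 0$ and $\mathrm{sgn}(x_{0,n})e_n \neq 2cx_0$, $u$ has a convex crease; no smooth function touches $u$ from above there (same first-order obstruction as for $|x|$ at the origin), so the subsolution condition is vacuous. For the supersolution test, any smooth convex $\varphi \le u$ touching $u$ at such a seam point must have $\nabla\varphi(x_0,t_0)$ in the segment joining $\mathrm{sgn}(x_{0,n})e_n$ and $2cx_0$. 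Along any direction $v$ tangent to $\Sigma$, both gradients give the same directional derivative, so the one-sided inequalities $\varphi \le |x_n|$ (on the angle side) and $\varphi \le W$ (on the parabola side) are simultaneously active to first order, and second-order expansion forces $v^\top D^2\varphi\, v \le 0$; combined with convexity this gives $v^\top D^2\varphi\, v = 0$ for every $v$ tangent to $\Sigma$. Since $\Sigma$ is of codimension $1$, $D^2\varphi$ picks up $n-1$ zero eigenvalues, so $\det D^2\varphi = 0$. Together with $\varphi_t(x_0, t_0) \ge 0$ — a consequence of $u(x_0, \cdot)$ being non-decreasing in $t$ (since $W(x_0,\cdot)$ is) — one obtains $\varphi_t \ge (\det D^2\varphi)^p = 0$, as required. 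Exceptional seam points (where $x_{0,n}=0$, where $u$ reduces locally to $|x_n|$, or where the two gradients coincide, where $u$ is $C^1$) are handled by reduction to one of the component solutions. The hard part will be exactly this tangent-direction Hessian computation at the generic seam points.
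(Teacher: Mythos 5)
Your function $u=\max\{|x_n|,W\}$ is a viscosity \emph{subsolution} (the maximum of two subsolutions always is), but it is not a supersolution, and the error sits exactly in the "hard part" you flagged: the tangential Hessian computation at the seam. Take a generic seam point $x_0$ with $x_{0,n}>0$ and let $v$ be tangent to $\Sigma=\{x_n=W\}$ at $x_0$. Along the straight line $s\mapsto x_0+sv$ one has, writing $f(s)=(x_0+sv)_n$ and $g(s)=W(x_0+sv,t_0)$, that $f(0)=g(0)$, $f'(0)=g'(0)$ (this is what tangency to $\Sigma$ means) and $g(s)-f(s)=c|v|^2s^2>0$ for $s\ne 0$. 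Hence $u$ restricted to the tangent line coincides with the \emph{strictly convex} piece $W$, not with the angle: the line leaves the region $\{u=|x_n|\}$ to second order, so the constraint $\varphi\le|x_n|$ is never active along it. The correct conclusion is therefore $v^{\top}D^2\varphi\,v\le v^{\top}D^2W\,v=2c|v|^2$, not $v^{\top}D^2\varphi\,v\le 0$. In the direction normal to $\Sigma$ the graph of $u(\cdot,t_0)$ has a genuine convex kink (the two gradients differ), so the normal second derivative of a test function touching from below is not bounded from above at all in a sufficiently small neighborhood. One can thus choose a smooth convex $\varphi\le u$ touching at $(x_0,t_0)$ with $\varphi_t=0$ (allowed, since $u(x_0,\cdot)=|x_{0,n}|$ for $t\le t_0$) and $\det D^2\varphi$ as large as one likes, violating $\varphi_t\ge(\det D^2\varphi)^p$. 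So $u$ is not a solution.

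There is also a structural red flag you should have caught: your construction nowhere uses $p>\frac{1}{n-2}$, yet Proposition \ref{lem-p11} of this same section shows that for $p\le\frac{1}{n-2}$ any supersolution lying above $|x_n|$ at $t=0$ (and positive on the boundary of a large ball, as your $u(x,0)=\max\{|x_n|,c|x|^2-1\}$ is) must satisfy $u(0,t)>0$ instantly. A construction valid for all $p$ and all $n$ would contradict that proposition, so no "gluing" of stationary pieces can work; the flow instantly smooths the corner where the wedge meets the paraboloid. The paper's proof proceeds entirely differently: it builds an exact self-similar solution $u(x,t)=f(t)\,v(x/f(t))$ with $f(t)=[(1+np)(T-t)]^{1/(1+np)}$ and $v(y)=\vphi(|y'|)\,g(y_n/\vphi(|y'|))$, $\vphi(r)=C_{n,p}r^{\beta}$, reducing the PDE to ODEs for $g$ and $\vphi$; the exponent $\beta=\frac{2(n-1)}{\,n-2-1/p\,}$ must exceed $1$, which is precisely where the restriction $p>\frac{1}{n-2}$ enters. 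You would need to follow a construction of that type (or otherwise exploit $p>\frac 1{n-2}$) rather than a max of explicit solutions.
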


\begin{proof} We will seek for a solution $u$ of  the form
\begin{equation}\label{eqn-dfnu}
u(x,t) = f(t) \, v(\frac {x}{g(t)})
\end{equation}
for some functions $f=f(t)$ and $v=v(y)$. The function $u$
satisfies \eqref{eqn-pma3} if and only if
$$(-f')\, \left ( \frac xf\,\,  \nabla v(\frac xf) - w \right ) = f^{-n\,p} \, (\det D^2 v)^p.$$
We pick a function $f$ which satisfies
\begin{equation}\label{eqn-eqg}
- f' = f^{-n\, p}.
\end{equation}
Solving \eqref{eqn-eqg}  gives us
\begin{equation}\label{eqn-dfng}
f(t) = [(1+n\, p)\, (T-t)]^{\frac 1{1+np}}
\end{equation}
for any  constant $T >0$. We will next show that there exists a
function $v=v(y)$ such that
\begin{equation}\label{eqn-eqw}
y  \cdot \nabla v - v = (\det D^2 v)^p, \qquad v(y) \geq |y_n|,
\qquad v(0)=0.
\end{equation}
The existence of such a function $v$ implies the claim of our
proposition. To this end, we seek for $v$ of the form
\begin{equation}\label{eqn-dfnw}
v(y',y_n) = \tilde v (|y'|,y_n) = \vphi(|y'|)\,
g(\frac{y_n}{\vphi(|y'|)}),
\end{equation}
with $g(s) \ge |s|$. A direct computation shows that,
$$\tilde v_1 = \vphi' \, g - \vphi' \, \frac{y_n}{\vphi} \, g' = \vphi '\, (g-s\, g'), \qquad \tilde v_2=
g'(\frac{y_n}{\vphi}) =g'(s)$$ with $s={y_n}/{\vphi}$. Also,
$$\tilde v_{11}=\vphi'' \, (g-s\, g') + \vphi' \, s \, g'' \, \frac{y_n}{\vphi^2}\, \vphi', \quad
\tilde v_{12} = - \frac{\vphi'}{\vphi} \, s\, g'', \quad \tilde
v_{22} = \frac 1{\vphi}\, g''.$$ Using that ${y_n}/{\vphi}=s$, we
get
$$y  \cdot \nabla v - v = |y'|\, \vphi'\, (g-s\, g') + y_n \, g' - \vphi\, g =
 (|y'|\, \vphi'-\vphi)\, (g-s\, g'),$$
 and also,
$$\det D^2 v = \frac{\vphi''}{\vphi} \, g'' \, (g-s\, g')^{n-1} \, \left ( \frac{\vphi'}{|y'|}  \right )^{n-2}.$$
Separating the functions $g$ and $\phi$, we conclude that $v$
satisfies \eqref{eqn-eqg}, if
$$g''\, (g-s\, g')^{n-1-\frac 1p} =1 \qquad \mbox{and} \qquad \vphi''   \, \left ( \frac{\vphi'}{|y'|}  \right )^{n-2}
= (|y'|\, \vphi'-\vphi)^{\frac 1p}\, \vphi.$$ For the second
equation we seek for a solution in the form $\vphi(r) = C_{n,p} \,
r^\beta$ with $\beta>1$. We find that $\varphi$ satisfies the
above equation if
$$(\beta -2)\, (n-1) = \frac \beta p + \beta$$
which after we solve for $\beta$ yields to
$$\beta = \frac{2\, (n-1)}{(n-2-1/p)} .$$
Since we need $\beta >1$, we have to restrict ourselves to the
exponents $p > \frac 1{n-2}$.

Next we find an even function $g$, convex of class $C^{1,
\alpha}$, that solves the ODE for $g$ in the viscosity sense and
for which $g(s)=|s|$ for large values of $s$. Rewriting the ODE
and the conditions above in terms of the Legendre transform $g^*$
of $g$ we find
$$(g^*)''=|g^*|^{n-1-1/p} \quad \mbox{in $[-1,1]$}, \quad g^*(1)=g^*(-1)=0.$$
The existence of $g^*$ follows by scaling the negative part of any
even solution $\tilde g$ to the ODE above, i.e $g^*(t)=a\tilde
g(t/b)$ for appropriate constants $a$ and $b$. We obtain the
function $g$ by taking the Legendre transform of $g^*$.

\end{proof}

\begin{rem} Proposition \ref{prop-p12} shows that in the Gauss curvature flow \eqref{eqn-gcf1} with exponent $p$,
if the initial data is a cube, then the edges ($n-1$-dimensional)
move instantaneously if and only if $p \leq \frac{1}{n-2}$.   In
the particular case of the classical Gauss curvature flow with
$p=1$, the edges of the cube move instantaneously if and only $n
\leq 3$.
\end{rem}

\section{The geometry of lines}
Our goal in this section is to prove Theorem \ref{thm-fs1}, which
constitutes the parabolic version of the result of Caffarelli for
Monge-Ampere equation. Theorem \ref{thm-fs1} deals with extremal
points of the set $\{ \, u=0 \, \}$ for a nonnegative solution $u$
of \pma. We begin by giving the definition of an {\em extremal
point} of a convex set (cf. in \cite{Gu}, Chapter 5).

\begin{defn} Let $D$ be a convex subset of $\R^n$. The point $x_0 \in \partial D$ is an
extremal point of $D$ if $x_0$ is not a convex combination of
other points in $\overline D$.

\end{defn}

We now give the main results of this section. The first Theorem
states that a constant segment in time can be extended backward
all the way to the initial data.

\begin{thm}\label{thm-ctt}
Let $u$ be a solution of equation \pma on $\om \times [-T,0]$.
Assume $u(0,t)=0$ for $t\in [-\delta,0]$ and there exists a
section $S_{h_0}(0):=\{u(x,0) < h_0 + p_{h_0}\cdot x\}$ at $(0,0)$
that is compactly supported in $\Omega$. Then $u(0,t)=0$ for all
$t \in [-T,0]$.
\end{thm}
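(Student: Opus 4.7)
The plan is to argue by contradiction, using the forward-in-time supersolution estimate (Remark~\ref{rem-p6}) at the earliest point where $u(0,\cdot)$ becomes zero. Suppose there exists $t^\ast\in[-T,-\delta)$ with $u(0,t^\ast)<0$. By the Hölder continuity of $u$ in $t$ from Corollary~\ref{cor-p3} together with the hypothesis $u(0,-\delta)=0$, the value $t_1:=\sup\{t\in[-T,0]:u(0,t)<0\}$ lies in $[-T,-\delta]$ and satisfies $u(0,t_1)=0$ and $u(0,t)=0$ for every $t\in[t_1,0]$. The goal is to exhibit some $t\in(t_1,0]$ at which $u(0,t)>0$, contradicting this constancy. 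I will achieve this by applying Remark~\ref{rem-p6} at $(0,t_1)$, after building a section there that is simultaneously compactly contained in $\om$ and balanced around~$0$.

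Since $u(0,t_1)=u(0,0)=0$ and $u$ is non-decreasing in $t$, every subgradient $p\in\partial u(\cdot,t_1)|_0$ is automatically also in $\partial u(\cdot,0)|_0$, because $u(x,0)\ge u(x,t_1)\ge p\cdot x$. Fix such a $p$. For $h$ sufficiently small, the section $S^0_h:=\{x\in\om:u(x,0)<p\cdot x+h\}$ is compactly contained in $S_{h_0}(0)$, hence in $\om$. Corollary~\ref{cor-p3} gives $0\le u(x,0)-u(x,t_1)\le C_0|t_1|^{1/(np+1)}$, so as soon as $h\ge 2C_0|t_1|^{1/(np+1)}$, the corresponding section at $(0,t_1)$, namely $S^{t_1}_{h/2}:=\{x\in\om:u(x,t_1)<p\cdot x+h/2\}$, lies inside $S^0_h$ and is therefore compactly contained in $\om$. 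Lemma~\ref{lem-c1a-1}, combined with John's lemma, then produces a $C_n$-balanced sub-section $\tilde S_{h'}\subset S^{t_1}_{h/2}$ around $0$ at some height $h'\le h/2$. Remark~\ref{rem-p6} applied to $\tilde S_{h'}$ yields $u(0,t)\ge u(0,t_1)+(1-\delta')h'=(1-\delta')h'>0$ for every $t\ge t_1+C(\delta')|\tilde S_{h'}|^{2p}/(h')^{np-1}$; choosing $h'$ so that this time falls inside $(t_1,0]$ provides the desired contradiction.

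The main obstacle is the tension between the two constraints on $h$: it must be at least of order $|t_1|^{1/(np+1)}$ for the Hölder comparison to force $S^{t_1}_{h/2}$ inside the compactly contained $S^0_h$, yet the ratio $|\tilde S_{h'}|^{2p}/(h')^{np-1}$ must be strictly below $|t_1|/C$ so that the forward propagation lands within $[t_1,0]$. When $|t_1|$ is close to $\delta$ these two requirements may not be simultaneously achievable in a single application; the remedy is to iterate, each application extending the flat interval $[t_1,0]$ backward by a definite fraction of $|t_1|$, so that the geometric growth of $|t_1|$ reaches $-T$ in finitely many steps. The delicate bookkeeping—tracking the dependence of the constants in Lemma~\ref{lem-c1a-1} and Remark~\ref{rem-p6} on the degeneracy of $u(\cdot,t_1)$ at $0$, which is forced in the viscosity sense by the constancy of $u(0,\cdot)$ on $[t_1,0]$ (yielding $\det D^2u(0,t)=0$ there)—is where the main technical work of the proof lies.
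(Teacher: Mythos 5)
There is a genuine gap: your argument only uses forward-in-time estimates, but the step that must do the work here is a backward-in-time propagation of lower bounds, and the forward estimate you rely on points the wrong way. Concretely, to get your contradiction you need some height $h'$ with $C\,|\tilde S_{h'}|^{2p}/(h')^{np-1}<|t_1|$, so that the lifting time from Remark \ref{rem-p6} lands in $(t_1,0]$. No such $h'$ can exist: since Remark \ref{rem-p6} applied at $(0,t_1)$ forces $u(0,t)>0$ once $t\ge t_1+C\,|\tilde S_{h'}|^{2p}/(h')^{np-1}$, the hypothesis $u(0,0)=0$ itself already implies $C\,|\tilde S_{h'}|^{2p}/(h')^{np-1}\ge|t_1|$ for every admissible $h'$. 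So the supersolution estimate can only ever yield a lower bound on section volumes, never your contradiction; and your proposed ``iteration extending the flat interval backward'' has no mechanism behind it, because none of the tools you cite (Remark \ref{rem-p6}, Corollary \ref{cor-p3}, Lemma \ref{lem-c1a-1}) propagates the value $0$ to times earlier than $t_1$ --- Corollary \ref{cor-p3} only gives $u(0,t)\ge -C|t-t_1|^{1/(np+1)}$ there.

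The missing ingredient is the subsolution estimate, Proposition \ref{prop-p2} (Remark \ref{rem-p2}), and the paper's proof is exactly the two-step interplay your proposal lacks. First, from $u(0,0)-u(0,-\sigma)=0<h/2$ and Remark \ref{rem-p6}, one gets the volume lower bound $\sigma\le C(d)\,|S_h(-\sigma)|^{2p}/h^{np-1}$ for every $h\le h_0$, where $S_h(-\sigma)$ is a $C_nd$-balanced section at a fixed small time $-\sigma$ supplied by Lemma \ref{lem-c1a-1}. Second, if $u(0,-t_0)=0$, Proposition \ref{prop-p2} applied at $(0,-t_0)$ in the set $S_h(-\sigma)$ gives $u(0,t)\ge -h$ for $t\ge -t_0-c(d)\,|S_h(-\sigma)|^{2p}/h^{np-1}$, hence, by the volume bound, for all $t\ge -t_0-c(d)\sigma$; letting $h\to0$ yields $u(0,t)=0$ on $[-t_0-c(d)\sigma,\,0]$. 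This extends the flat interval backward by the fixed amount $c(d)\sigma$ at each step and reaches $-T$ in finitely many iterations. Your closing remark about $\det D^2u(0,t)=0$ in the viscosity sense plays no role and cannot substitute for this quantitative backward estimate.
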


The second Theorem states that if the graph of $u$ at a given time
coincides with a tangent plane in a set $D$ that has an extremal
point in $\Omega$, and $D$ contains at least a line segment, then
$u$ agrees with the initial data on $D$.

 In other words, a line segment at a given time either originates
 from the boundary data at that particular time or from the data at the initial time.

\begin{thm}\label{thm-fs1} Let $u$ be a solution of equation \pma on $\om \times [-T,0]$,
for some convex domain $\om \subset \R^n$. Suppose that at time
$t=0$ we have $u \geq 0$, and the set $$D:=\{ \, u(x,0)=0 \}$$
contains a line segment and $D$ has an extremal point in $\Omega$.
Then,
$$u(x,-T) =0, \qquad  \mbox{for all $x \in D$}.$$
\end{thm}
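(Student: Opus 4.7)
The plan is to reduce the statement, via Theorem \ref{thm-ctt}, to a short-time vanishing at an interior point of the line segment in $D$, and then to propagate the conclusion to all of $D$ using the convex structure.

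I would first normalize: after subtracting the supporting linear function to $u(\cdot,0)$ along $D$, I may assume $u(\cdot,0)\ge 0$ on $\om$ and $D=\{u(\cdot,0)=0\}$. Translating an extremal point of $D$ in $\om$ to the origin and using convexity of $D$ together with the hypothesis of a segment in $D$, I may arrange that $L:=[0,ae_n]\subset D$ (the extremality of $0$ forces the segment to be one-sided at $0$).

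The central step is to verify the hypotheses of Theorem \ref{thm-ctt} at an interior point $y\in L$: a compactly supported section at $(y,0)$ is immediate from $y\in \om$ at sufficiently small height, so I concentrate on producing an initial interval $[-\delta,0]$ on which $u(y,t)=0$. The upper bound $u(y,t)\le 0$ is automatic from $u(y,0)=0$ together with the monotonicity of $u$ in $t$, so only the matching lower bound is at stake. I would use the centered $h$-section $S_h(y)$ of $u(\cdot,0)$: because $u(\cdot,0)\equiv 0$ on $L$, the supporting slope $p_h$ defining $S_h(y)$ is perpendicular to $e_n$ and $S_h(y)\supset L$. Remark \ref{rem-p2'} applied to the tilted subsolution $\tilde u(x,t):=u(x,t)-h-p_h\cdot(x-y)$ then yields
\[ u(y,t)\ge -h \qquad \text{for } t\ge -c\,\frac{|S_h(y)|^{2p}}{h^{np-1}}. \]
Since $S_h(y)\supset L$, its volume satisfies $|S_h(y)|\ge c\,a\,\rho(h)^{n-1}$, where $\rho(h)$ is its transverse thickness. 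If the ratio $|S_h(y)|^{2p}/h^{np-1}$ stays bounded below as $h\to 0^+$, then $u(y,t)\ge 0$ on a uniform interval $[-\delta,0]$, which combined with the upper bound gives $u(y,t)=0$ on $[-\delta,0]$. Theorem \ref{thm-ctt} then upgrades this equality to the whole of $[-T,0]$.

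The hard part is precisely this quantitative volume bound: one must show that the time interval in the backward subsolution estimate does not collapse to zero as $h\to 0^+$. Controlling the transverse width $\rho(h)$ of $S_h(y)$ from below is the technical heart of the argument, and it is where the length of the line segment and the extremal structure at the origin need to be exploited. Once the vanishing is established at one interior point of $L$, the same reasoning at every $y\in L\cap\om$ gives $u\equiv 0$ on $L\times[-T,0]$; for a general $x\in D\cap\om$, convexity of $D$ ensures the segment from $x$ to any interior point of $L$ lies in $D$, so the same argument applied at $x$ with this new segment in place of $L$ yields $u(x,t)=0$ on $[-T,0]$. By continuity of $u$ the equality extends to all of $D$, giving $u(x,-T)=0$ for every $x\in D$.
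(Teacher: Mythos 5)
Your skeleton is right (establish $u(y,\cdot)=0$ on a short initial interval at a point of the segment, then invoke Theorem \ref{thm-ctt}, then spread by convexity), but the step you flag as ``the technical heart'' is not a detail you can defer: it is the entire content of the theorem, and the bound you hope for is false. You want $|S_h(y)|^{2p}/h^{np-1}$ to stay bounded below as $h\to 0^+$, so that a single application of Proposition \ref{prop-p2} yields a \emph{uniform} backward time interval. Take $u(\cdot,0)=|x'|$ near the segment $L=[0,ae_n]$: then the transverse thickness of $S_h(y)$ is $\rho(h)\sim h$, so $|S_h(y)|\sim a\,h^{n-1}$ and
$$\frac{|S_h(y)|^{2p}}{h^{np-1}}\sim h^{(n-2)p+1}\longrightarrow 0 .$$
So the time interval produced by one section does collapse, and no single-shot argument of this kind can work; the segment alone does not make sections fat enough transversally.

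What the paper does instead (Lemma \ref{lem-l1}) is a \emph{multiplicative} backward gain followed by an iteration. Fix the extremal point $0$ of $D$ and an interior point $e_n$ of the segment. Extremality forces the centered sections $T_{6h}(0,-t)$ to lie in $B_\delta$ with $\delta$ arbitrarily small (for $h,t$ small). Two estimates are then played against each other: the supersolution estimate (Proposition \ref{prop-p6}) at $0$ gives $t_0\le C\,|T_{6h}|^{2p}/h^{np-1}$ with $|T_{6h}|\le C\delta\,|T'_{6h}|$ because the section is thin in the $e_n$ direction; the subsolution estimate (Proposition \ref{prop-p2}) is applied not to $T_{6h}$ but to the convex hull of the slice $T'_{6h}$ with the segment $[0,2e_n]$, whose volume is $\gtrsim |T'_{6h}|$, i.e.\ larger by a factor $\sim\delta^{-1}$. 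The upshot is: if $u(e_n,-t_0)\ge -h$ then $u(e_n,-Mt_0)\ge -2h$ with $M=1+c\delta^{-2p}$. Each step only multiplies the time by $M$ and doubles the error, but since $\delta$ can be taken small enough that $M>4^{np+1}$, iterating from $t_0\to 0$ (with $h\sim t_0^{1/(np+1)}$ from H\"older continuity in time) reaches a fixed time $\sigma$ with error tending to zero. Only after this does Theorem \ref{thm-ctt} enter, and it is applied at points $se_n$ with $s$ \emph{small}, near the extremal point, where compactly supported sections genuinely exist (your claim that such a section at a general interior point $y$ of $L$ is ``immediate'' is also shaky, since every section at $y$ contains a fixed half of $L$, which may reach toward $\partial\om$). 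As written, your argument has a gap exactly where the extremality hypothesis and the interplay between the two time scales must be used, and the quantitative statement you propose to fill it with is not true.
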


As a consequence of the theorems above we obtain the following:

\begin{cor}
Assume $u$ is defined in $\om \times [-T,0]$ and $u(x,-T) \ge 0$
on $\partial \om$. Then $u$ is strictly convex in $x$ and strictly
increasing in $t$ at all points $(x,t)$ that satisfy $u(x,-T) <
u(x,t) <0$.
\end{cor}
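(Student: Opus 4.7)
The plan is to derive both strict convexity and strict monotonicity by contradiction, invoking Theorems \ref{thm-fs1} and \ref{thm-ctt} respectively; the hypothesis $u(x,-T)\ge 0$ on $\partial\om$ together with $u(x,t)<0$ is precisely what supplies the compact configurations those theorems require.

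\textbf{Strict monotonicity in $t$.} Suppose $u(x,\cdot)$ is not strictly increasing at $(x,t)$, so there is some $s\ne t$ with $u(x,s)=u(x,t)$. By monotonicity, $u(x,\cdot)$ is constant at the value $c:=u(x,t)<0$ on the closed interval between $s$ and $t$; denote its right endpoint by $\tau$. Set $\hat u:=u-c$ and consider the flat section at $(x,\tau)$ with zero slope, $\{y\in\om:\hat u(y,\tau)<h_0\}=\{u(y,\tau)<c+h_0\}$, choosing $h_0:=|c|/2$. Since $u$ is increasing in $t$ and $u(\cdot,-T)\ge 0$ on $\partial\om$, we have $u(\cdot,\tau)\ge 0>c/2=c+h_0$ on $\partial\om$, so this section is compactly contained in $\om$. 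Theorem \ref{thm-ctt} applied to $\hat u$ on $\om\times[-T,\tau]$ then yields $\hat u(x,-T)=0$, that is, $u(x,-T)=c=u(x,t)$, contradicting $u(x,-T)<u(x,t)$.

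\textbf{Strict convexity in $x$.} Suppose $u(\cdot,t)$ fails to be strictly convex at $x$, and pick a supporting affine function $l(y):=u(x,t)+p\cdot(y-x)$ whose contact set $D:=\{y\in\overline{\om}:u(y,t)=l(y)\}$ strictly contains $\{x\}$. Then $D$ is a convex set containing a line segment through $x$. Set $\tilde u:=u-l$, so $\tilde u(\cdot,t)\ge 0$ and $D=\{\tilde u(\cdot,t)=0\}$. We claim $D$ has an extremal point in $\om$: any $y\in D\cap\partial\om$ satisfies $l(y)=u(y,t)\ge u(y,-T)\ge 0$, so if every extremal point of $D$ lay on $\partial\om$ then writing $x$ as a convex combination of them would give $l(x)\ge 0$, contradicting $l(x)=u(x,t)<0$. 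Thus the hypotheses of Theorem \ref{thm-fs1} hold for $\tilde u$ on $\om\times[-T,t]$, yielding $\tilde u(\cdot,-T)=0$ on $D$ and in particular $u(x,-T)=l(x)=u(x,t)$, again a contradiction.

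The only real subtlety is the extremal-point argument in the convexity step, since Theorem \ref{thm-fs1} does not apply to a contact set whose extremal points all lie on $\partial\om$; this is precisely ruled out by combining $l(x)=u(x,t)<0$ with $u(\cdot,-T)\ge 0$ on $\partial\om$, which forces $l$ to be strictly negative at some extremal point of $D$ and hence places that extremal point in the interior.
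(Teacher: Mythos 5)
Your proof is correct and follows exactly the route the paper intends: the corollary is stated there as an immediate consequence of Theorems \ref{thm-ctt} and \ref{thm-fs1} with no written proof, and your argument supplies the missing details (the choice of section $\{\hat u(\cdot,\tau)<|c|/2\}$ for the monotonicity part, and the Minkowski/extremal-point argument showing $u(x,t)<0$ together with $u(\cdot,-T)\ge 0$ on $\partial\om$ forces an interior extremal point of the contact set).
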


We first prove Theorem \ref{thm-ctt}.

\begin{proof}[Proof of Theorem \ref{thm-ctt}]
By continuity of $u$ the section
$$S_{h_0}(-\sigma):=\{u(x,-\sigma) < h_0 + p_{h_0}\cdot x\}$$  at
$(0,-\sigma)$ is also compactly included in $\Omega$ for a small
$\sigma \in [0,\delta]$. Let $d$ be sufficiently large so that
$S_{h_0}(-\sigma)$ is $d$-balanced around $0$. By Lemma
\ref{lem-c1a-1}, for each $h \le h_0$ we can find  a section
$S_h(-\sigma)$ which is $C_nd$-balanced around $0$. We apply
Proposition \ref{prop-p6} (see Remark \ref{rem-p6}) and use that
$u(0,0)-u(-\sigma,0)=0<h/2$ to conclude
$$\sigma \le C(d)\frac{|S_h(-\sigma)|^{2p}}{h^{np-1}}.$$

Assume next that $u(0,-t_0)=0$, for some $t_0>\sigma$. We apply
Proposition \ref{prop-p2} (see Remark \ref{rem-p2}) at $(0,-t_0)$
in the set $S:=S_h(-\sigma)$ and conclude
$$u(0,t)\ge -h, \quad \mbox{for} \quad t \ge
-t_0-c(d)\frac{|S_h(-\sigma)|^{2p}}{h^{np-1}}.$$ Using the bound
on $\sigma$ we find that $u(0,t)=0$ for $t \ge -t_0-c(d)\sigma$
and the conclusion follows.
\end{proof}

Next lemma is the key step in the proof of Theorem \ref{thm-fs1}.

\begin{lem}\label{lem-l1}
Assume $u(se_n,0)=0$ for $s\in [0,2]$, and for some $t_0>0$
$$u(e_n,-t_0)\ge -h, \quad \quad T_{6h}(0,-t_0) \subset B_\delta \subset \subset \Omega,$$
where $T_{6h}(0,-t_0)$ is the centered section at $0$ at time
$-t_0$. Then
$$u(e_n, -Mt_0) \ge -2h, \quad \quad \mbox{with $M=1+c \delta^{-2p}$, \quad ($c$ universal)}.$$
\end{lem}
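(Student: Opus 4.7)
The plan is to apply Proposition~\ref{prop-p2} (in the form of Remark~\ref{rem-p2}) at the point $(e_n,-t_0)$ with a $d$-balanced section $S$ of $u(\cdot,-t_0)$ of height $h$ around $e_n$. This would give $u(e_n,t)\ge u(e_n,-t_0)-h\ge -2h$ for $t\ge -t_0-c(d)|S|^{2p}/h^{np-1}$, so to conclude it suffices to produce such a section with
\[
|S|^{2p}/h^{np-1}\ge c\,\delta^{-2p}\,t_0.
\]

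First I would extract one-dimensional information along the segment $[0,2e_n]$. Monotonicity of $u$ in $t$ together with $u(se_n,0)=0$ gives $u(se_n,-t_0)\le 0$; combined with $u(e_n,-t_0)\ge -h$ and convexity, the one-variable function $g(s):=u(se_n,-t_0)$ has $|g'(1)|\le h$ and hence $u(0,-t_0)=g(0)\ge -2h$. Next I would use the forward super-solution estimate of Proposition~\ref{prop-p6} (Remark~\ref{rem-p6}) at $(0,-t_0)$ applied to the $C_n$-balanced centered section $T_{6h}(0,-t_0)$: if the forward propagation were to reach time $0$, with parameter $\eta=1/3$ say, we would obtain $u(0,0)\ge u(0,-t_0)+4h\ge 2h>0$, contradicting $u(0,0)=0$. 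Thus the forward-time bound in Remark~\ref{rem-p6} must exceed $t_0$, which yields
\[
t_0\le C\,|T_{6h}(0,-t_0)|^{2p}/h^{np-1},
\]
and the problem reduces to finding a $d$-balanced section at $e_n$ of height $h$ with $|S|\ge c\,|T_{6h}(0,-t_0)|/\delta$.

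To construct $S$ I would rescale by a linear map $A$ with $B_1\subset A^{-1}T_{6h}(0,-t_0)\subset B_{C_n}$ (John's lemma). The inclusion $T_{6h}\subset B_\delta$ forces $\|A\|\le\delta$, so $z_0:=A^{-1}e_n$ satisfies $|z_0|\ge 1/\delta$. In the rescaled solution $w(y,s)=(6h)^{-1}[u(Ay,-t_0+ms)-\ell(Ay)]$, with $m=(\det A)^{2p}/(6h)^{np-1}$ so that the Monge--Amp\`ere equation is preserved, the rescaled $T_6$-section at $0$ is comparable to $B_1$, while $z_0$ lies far from the origin and the rescaled segment $[0,2z_0]$ has length at least $2/\delta$. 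Repeating the one-dimensional analysis for $w$ at $z_0$ produces a section of $w$ at $z_0$ of bounded height containing $[0,2z_0]$; from it Lemma~\ref{lem-c1a-1} extracts a balanced subsection of height $1$ whose transverse extent is of order $1$, so its volume is $\gtrsim 1/\delta$ in the rescaled coordinates. Undoing the rescaling via the Jacobian $\det A\sim |T_{6h}(0,-t_0)|$ returns the required $|S|\ge c\,|T_{6h}|/\delta$ in the original coordinates.

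The main obstacle is verifying that the constructed section at $z_0$ in the rescaled picture has transverse dimensions of order $1$: a priori it could be very thin in directions perpendicular to $z_0$. The key enabling fact is that the convexity of $w$ together with $w\le 0$ on $[0,2z_0]$ and $w\sim 1$ on $\partial(A^{-1}T_{6h})$ near the origin force $w$ to remain small along ``tubes'' parallel to the segment, inheriting transverse $B_1$-width from the normalized section at the origin up to a universal factor.
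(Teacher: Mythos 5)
Your first two steps coincide with the paper's argument: convexity along $[0,2e_n]$ plus monotonicity in $t$ give $u(0,-t_0)\ge -2h$, and Proposition \ref{prop-p6} (Remark \ref{rem-p6}) applied in the centered section $T_{6h}:=T_{6h}(0,-t_0)$ gives $t_0\le C\,|T_{6h}|^{2p}/h^{np-1}$; since $T_{6h}\subset B_\delta$ is balanced around $0$, this yields $|T_{6h}'|^{2p}/h^{np-1}\ge c\,\delta^{-2p}t_0$ with $T_{6h}':=T_{6h}\cap\{x_n=0\}$. The reduction to producing a balanced convex set around $e_n$, of volume $\gtrsim |T_{6h}|/\delta$, on which $u(\cdot,-t_0)$ lies below a suitable plane, is also the right one.

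The gap is precisely the step you flag as ``the main obstacle,'' and the heuristic you offer to close it rests on a false assertion. You claim the rescaled function is $\le 0$ on the segment after subtracting the supporting linear function $\ell(x)=u(0,-t_0)+6h+p_{6h}\cdot x$ of the centered section. But $p_{6h}\cdot e_n$ is forced to be very negative: the segment exits $T_{6h}\subset B_\delta$ at some $\tau e_n$ with $0<\tau\le\delta$, and there $0\ge u(\tau e_n,-t_0)\ge u(0,-t_0)+6h+\tau\, p_{6h}\cdot e_n\ge 4h+\tau\, p_{6h}\cdot e_n$, so $p_{6h}\cdot e_n\le -4h/\delta$. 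Consequently $u-\ell$ is positive on most of $[0,2e_n]$, the ``tube'' picture fails, and the section at $z_0$ you would extract can indeed degenerate transversally. The paper's fix is to discard the $e_n$-component of the slope: set $\tilde u:=u-p_{6h}'\cdot x'-6h$. Then $\tilde u(\cdot,-t_0)<0$ on $T_{6h}'\times\{0\}$ (there $p_{6h}\cdot x=p_{6h}'\cdot x'$ and $u(0,-t_0)\le 0$) and on $[0,2e_n]$ (there $u\le 0$), hence by convexity $\tilde u<0$ on the convex hull $S$ of these two sets. This $S$ has $|S|\ge c_n|T_{6h}'|$, is $d(n)$-balanced around $e_n$ because $T_{6h}$ is $C_n$-balanced around $0$, and $\tilde u(e_n,-t_0)\ge -7h$; Proposition \ref{prop-p2} (Remark \ref{rem-p2}) applied to $\tilde u$ in $S$ then gives $\tilde u(e_n,-t)\ge -8h$, i.e. $u(e_n,-t)\ge -2h$, for $t\le t_0+c\,|S|^{2p}/h^{np-1}$, and the lower bound on $|T_{6h}'|$ converts this into $t\le t_0(1+c\delta^{-2p})$. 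No John rescaling is needed: the transverse width comes for free from taking the convex hull with the full slice $T_{6h}'$ rather than trying to propagate width along the segment.
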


\begin{proof}
Since $u(2e_n, -t_0)\le u(2e_n,0)=0$, the convexity of
$u(\cdot,-t_0)$ implies that $u(0,-t_0) \ge -2h$. We apply
Proposition \ref{prop-p6} (see Remark \ref{rem-p6}) in the section
$$T_{6h}:=T_{6h}(0,-t_0)=\{u(x, -t_0) < u(0,-t_0) + 6h + p_{6h}
\cdot x \, \}$$ and conclude that
$$t_0 \le C \frac{|T_{6h}|^{2p}}{h^{np-1}}.$$
Indeed, otherwise we obtain $u(0,0) \ge h$ which contradicts the
hypothesis.
 Since $T_{6h} \subset B_\delta$ and has $0$ as center of mass, we find
 $$|T_{6h}|\le C \delta |T_{6h}'|, \quad \quad
 \mbox{where $T_{6h}':=\{ x'\in \R^{n-1}| \, \, \, (x',0) \in T_{6h}\}$},$$
for some $C$ depending only on $n$. Using the inequality for $t_0$
we conclude
\begin{equation}\label{eqn-Tbd}
\frac{|T_{6h}'|^{2p}}{h^{np-1}} \ge c \delta^{-2p}t_0.
\end{equation}

 Now we apply Proposition \ref{prop-p2} (see Remark
\ref{rem-p2}) for the function $$\tilde u=u - p_{6h}'\cdot x' -
6h$$ in the convex set $S$ which is the convex hull generated by
the $n-1$ dimensional set $T_{6h}' \times \{0\}$ and the segment
$[0,2e_n] $. Notice that $\tilde u$ is negative at time $-t_0$ in
$S$ and $\tilde u (e_n,-t_0) \ge -7h$. Since $S$ is $d$-balanced
with respect to $e_n$ with $d$ depending only on $n$ we conclude
that
$$\tilde u(e_n,-t) \ge -8h \quad \mbox{for} \quad t \ge -t_0-c
\frac{(2|T_{6h}'|)^{2p}}{h^{np-1}} ,$$ with $c$ universal. Using
\eqref{eqn-Tbd} we find $u(e_n,t) \ge -2h$ if $t \ge
-t_0(1+c\delta^{-2p})$.

\end{proof}

\begin{proof}[Proof of Theorem \ref{thm-fs1}]
Assume for simplicity that $0\in \om$ is an extremal point for $D$
and $2 e_n \in D$. We want to prove that $u(2e_n,-T)=0$.

Fix $\delta >0$ small, smaller than a universal constant to be
specified later. There exists $\sigma>0$ depending on $u$ and
$\delta$ such that $$T_{6h}(0,-t) \subset B_\delta \subset \subset
\Omega \quad \mbox{for all $h,t \in [0, \sigma]$}.$$ Indeed,
otherwise we can find a sequence of $h_n,t_n$ tending to $0$ for
which the inclusion above fails. In the limit we obtain that $0$
can be written as a linear combination of two other points in $D$
(one of them outside $B_\delta$) and contradict that $0$ is an
extremal point.

First we show that $u(x,-\sigma)=0$ on the line segment
$[0,2e_n]$. Using the Holder continuity of $u$ in $t$ at the point
$(e_n,0)$ we find that for small $t_0>0$, $$u(e_n,-t_0) \ge-
h:=-C(u)t_0^{\frac{1}{np+1}}.$$

 We can apply Lemma \ref{lem-l1} inductively and conclude that as long as
 $M^{k-1} t_0 \le \sigma$, $2^{k-1} h \le \sigma$ then
$$u(e_n,-M^kt_0) \ge -2^kh.$$

 We choose $\delta$ small enough so that $M=1+c\delta^{-2p}>4^{np+1}$. Then
 $$2^kh \le C(u)2^{-k}(M^kt_0)^{\frac{1}{np+1}} \le C(u)2^{-k}(M \sigma)^{\frac{1}{np+1}}.$$
 This shows that if we start with $t_0$ small enough then $M^{k-1} t_0 \le
 \sigma$ implies $2^{k-1} h \le \sigma$ and moreover, as $t_0 \to
 0$ then $2^k h \to 0$ as well. We conclude that $u(e_n,
 -\sigma)=0$ hence $u(x,-\sigma)=0$ on the line segment
$[0,2e_n]$.

Now we can use Theorem \ref{thm-ctt} for the points $(se_n,0)$ for
small $s \ge 0$ which are included in a compact section at the
origin at time $t=0$. Since $u(se_n,t)=0$ for $t \in [-\sigma,0]$,
we conclude that $u(se_n, -T)=0$ for small $s$. Then convexity in
$x$ and monotonicity in $t$ imply $u(x,-T)=0$ on the segment
$[0,2e_n]$.

\end{proof}

\section{The geometry of angles}

Our goal in this section is to prove the analogue of Theorem
\ref{thm-fs1} for angles. That is, if $u: \Omega \times [-T,0] \to
\R$ is a solution to \pma \, for which the graph of $u$ at time
$t=0$ has a tangent angle from below, then  this angle originates
either from the initial data $u(\cdot,-T)$ or from the boundary
data on $\partial \om$ at time $t=0$.

Throughout this section we will denote by $x'$ points
$x'=(x_1,\cdots,x_{n-1}) \in \R^{n-1}$ and by $x=(x',x_n)$ points
in $\R^n$. Our result states as follows.

\begin{thm}\label{thm-angle} Let $u:\om \times [-T,0] \to \R$ be a solution of equation \pma
with $\om \subset \R^n$.  Assume  that at time $t=0$, we have
$u(0,0)=0$, $u(x,0) \geq |x_n|$ and  $0$ is an extremal point for
the set $D:=\{ x \in \om: \,\, u(x,0)=0 \}.$ Then, $u(x,-T) \geq
|x_n|$.
\end{thm}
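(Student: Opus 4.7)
My plan is to mirror the strategy of Theorem~\ref{thm-fs1}, with the line segment in $D$ replaced by the angle barrier $u(\cdot,0)\ge|x_n|$.

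I would first establish a geometric confinement of the centered sections $T_{6h}(0,-t)$ at the origin: for every $\delta>0$ there exists $\sigma>0$ (depending on $u$ and $\delta$) such that $T_{6h}(0,-t)\subset B_\delta\subset\subset\om$ for all $h,t\in[0,\sigma]$. This uses two inputs. In the horizontal directions (the hyperplane $\{x_n=0\}$), the extremality of $0$ in $D$ together with a limit argument rules out sections extending along $\{x_n=0\}$: if they did, a subsequential limit would produce a nontrivial segment through $0$ in $D$, contradicting extremality. In the $x_n$-direction, the barrier $u(\cdot,0)\ge|x_n|$ combined with monotonicity $u(\cdot,-t)\le u(\cdot,0)$ confines the section to a slab $\{|x_n|\le Ch\}$ of width $O(h)$.

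Next comes the key iterative lemma, the angle analogue of Lemma~\ref{lem-l1}: there exist universal constants $c,C>0$ and an exponent $\gamma=\gamma(n,p)>0$ such that if $u(0,-t_0)\ge-h$ and $T_{6h}(0,-t_0)\subset B_\delta$ for $h,\delta$ small, then
$$u(0,-Mt_0)\ge -2h,\qquad M=1+c\,\delta^{-\gamma}.$$
The proof follows the template of Lemma~\ref{lem-l1}: the slab estimate from the previous step yields $|T_{6h}(0,-t_0)|\le Ch\,|T_{6h}''|$ where $T_{6h}''$ is the $(n-1)$-dimensional horizontal cross-section; Proposition~\ref{prop-p6} applied inside $T_{6h}$ gives a lower bound on $|T_{6h}''|$ in terms of $h$ and $t_0$; and Proposition~\ref{prop-p2} applied in the convex hull of $T_{6h}''\times\{0\}$ and the segment $[0,e_n]$ (using $u(e_n,0)\ge 1$ from the angle hypothesis as an upper barrier) gives the desired iteration. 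Starting from the H\"older-in-$t$ bound of Corollary~\ref{cor-p3} and iterating this lemma exactly as in the proof of Theorem~\ref{thm-fs1}, one obtains $u(0,-\sigma)=0$ for some small $\sigma>0$; Theorem~\ref{thm-ctt} then extends this to $u(0,-T)=0$.

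To upgrade the pointwise equality $u(0,-T)=0$ to the full angle bound $u(\cdot,-T)\ge|x_n|$, I would re-run the entire argument with the shifted solution $v_s(x,t):=u(x,t)-sx_n$ for each slope $s\in(-1,1)$. Each $v_s$ solves \pma, is convex in $x$, increasing in $t$, satisfies $v_s(\cdot,0)\ge(|x_n|-sx_n)\ge0$ with equality exactly on $D$, and $0$ is still an extremal point of $\{v_s(\cdot,0)=0\}=D$. A section-based strengthening of the iterative lemma then produces not only $v_s(0,-T)=0$ but the global bound $v_s(\cdot,-T)\ge0$ on $\om$, i.e.\ $u(\cdot,-T)\ge s\,x_n$; taking the supremum over $|s|<1$ yields $u(\cdot,-T)\ge|x_n|$.

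\textbf{Main obstacle.} The delicate step is the iterative lemma: quantifying the slab estimate on $T_{6h}(0,-t_0)$, the $d$-balancedness of the auxiliary convex set used for Proposition~\ref{prop-p2}, and the precise exponent $\gamma=\gamma(n,p)$ so that the iteration closes with constants depending only on $\delta$ and universal parameters. A secondary but real difficulty is the final upgrade from a pointwise equality at the origin to a global inequality; this requires tracking an entire section (not just the value at a single point) through the iteration, and ultimately relies on the same slab input used in the iterative lemma.
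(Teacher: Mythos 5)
There is a genuine gap, and it sits exactly at the two places you flag as ``delicate.'' First, your iterative lemma cannot be proved by the mechanism you describe. In Lemma~\ref{lem-l1} the time gain $M=1+c\delta^{-2p}$ comes from applying Proposition~\ref{prop-p2} in the convex hull of $T_{6h}'$ and the segment $[0,2e_n]$, and this is legitimate only because $u(se_n,0)=0$ for $s\in[0,2]$, so that (by monotonicity in $t$) the whole segment lies in the sublevel set $\{\tilde u(\cdot,-t_0)<0\}$; Proposition~\ref{prop-p2} needs an \emph{upper} bound $\tilde u\le 0$ on the boundary of the convex set. In the angle setting the zero set $D$ is contained in $\{x_n=0\}$, and the hypothesis $u(e_n,0)\ge 1$ is a \emph{lower} bound, so the segment $[0,e_n]$ is nowhere near the sublevel set at height $\sim h$: there is no long direction in $D$ along $e_n$ to generate the large convex hull, and the lemma as stated has no proof by this route. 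The paper replaces it with a dichotomy (Proposition~\ref{prop-angle-main}): either the supporting angle improves its opening by a factor $(1+\delta)$ at the larger height $C_0h$, or else the failure of improvement forces the existence of a long segment $[0,s_0e_n]$ with $s_0\sim h/(\alpha\delta)$ on which $u(\cdot,-t_0)$ lies below a slightly tilted plane at height $\sim C_0h$; that segment plays the role of $[0,2e_n]$ and yields the factor $c(d)\delta^{-2p}$. The price is a controlled loss in the angle opening, which forces the two-parameter bookkeeping (height and opening, via the sets $A_t$) in the paper's induction; a single doubling iteration on $u(0,-t)$ cannot capture this.

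Second, the upgrade from the pointwise statement to $u(\cdot,-T)\ge|x_n|$ via the shifts $v_s=u-sx_n$ gains nothing: $v_s(0,-T)=u(0,-T)$ for every $s$, so re-running a pointwise argument at the origin for $v_s$ reproduces the same equality $u(0,-T)=0$ and does not force $v_s(\cdot,-T)\ge0$ globally. The conclusion of the theorem is that a specific supporting \emph{angle} (the maximum of two tangent planes with slope gap $2$ in the $e_n$ direction) survives back to time $-T$, and one must propagate that object --- its vertex height $h$ and its opening $\alpha$ --- through the backward iteration. This is precisely what Definition~\ref{def-A} encodes and what the paper's proof tracks, letting $h\to0$ and the accumulated opening loss $\prod(1+\delta/2^m)^{-1}\to e^{-\delta}\to1$ at the end. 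Your plan defers this to ``a section-based strengthening of the iterative lemma,'' but that strengthening is the theorem's actual content, not a secondary difficulty.
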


The proof of Theorem \ref{thm-angle} is more involved than that of
Theorem \ref{thm-fs1}. We introduce the following convenient
notation.

\begin{defn}\label{def-A} For negative times $t \le 0$ we say that
$$(h,\alpha) \in A_t(u) \subset \R^2_+$$ if there exist vectors
$q_1,q_2 \in \R^n$ such that
$$u(x,t) \geq u(0,0) -h + \max \, \{ q_1\cdot x,q_2 \cdot x \}$$ in $\Omega$
and $(q_1-q_2) \cdot e_n \geq \alpha$. Whenever there is no
possibility of confusion we write $A_t$ instead of $A_t(u)$.
\end{defn}

\begin{rem}
The statement $(h,\alpha) \in A_t$ is in fact a one-dimensional
condition on $u(x,t)$. It says that, when restricted to the line
$se_n$, we can find a certain angle below the graph of $u(\cdot,t
)$. The vertex of the angle is at distance $h$ below $u(0,0)$ at
the origin and the difference in the slopes of the lines that form
the angle is $\alpha$.

Clearly, if $(h, \alpha) \in A_{t_1}$ then $(h, \alpha) \in A_t$
for all $t \ge t_1$. The statement $(h,\alpha)\in A_t$ remains
true if we add to $u$ a linear function in $x$ or if we perform an
affine transformation in the $x$ variable that leaves $e_n$
invariant.
\end{rem}

Next proposition is the key step in proving Theorem
\ref{thm-angle} and later for obtaining interior $C^{1, \alpha}$
estimates.

\begin{prop}\label{prop-angle-main} Let  $u$ be   a solution of equation \pma
with $u(0,0)=0$.  Assume that at time $-t_0$, ($t_0>0$) the
solution $u$ satisfies for a fixed constant $C_0$ and a parameter
$\delta \le 1$:
\begin{enumerate}[i.]
\item $(h,\alpha) \in A_{-t_0}$ and $(C_0\, h, (1+\delta) \, \al ) \notin A_{-t_0}$,
and
\item there exists a section (at distance $h$ from the origin)
$$S_h := \{   u(x,-t_0) < h + q\cdot x \, \}$$ of $u(\cdot,-t_0)$ which is $d$-balanced with respect to the
origin and is compactly supported in $\om$.
\end{enumerate}
Then,
$$\left (C_0^2\, h, \frac {\al}{1+ \frac \delta 2}  \right ) \in A_{-t}, \qquad \mbox{for} \quad  t_0 \leq t \leq t_0 +c(d)  \, \delta^{-2p}\, t_0$$
for some $c(d) >0$.

\end{prop}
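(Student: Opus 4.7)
My plan is to propagate the angle from time $-t_0$ backward to time $-t$ by applying the subsolution estimate of Remark~\ref{rem-p2'}, with the volume of the sections used calibrated by geometric information drawn from both hypothesis (ii) and the negation of $(C_0h,(1+\delta)\alpha)\in A_{-t_0}$.

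I would first normalize: after subtracting $\tfrac{q_1+q_2}{2}\cdot x$ and applying a shear that preserves $e_n$, the angle in (i) becomes $u(x,-t_0)\ge -h+\tfrac{\alpha}{2}|x_n|$, while the section $S_h(0,-t_0)$ remains $d'$-balanced around $0$. Testing the negation against the family of symmetrically steepened angles $r\cdot x+\tfrac{(1+\delta)\alpha}{2}|x_n|$ produces, for every $r\in\R^n$, a witness point $y=y(r)\in\Omega$ satisfying
\[
-h+\tfrac{\alpha}{2}|y_n|\;\le\;u(y,-t_0)\;<\;-C_0h+r\cdot y+\tfrac{(1+\delta)\alpha}{2}|y_n|,
\]
which rearranges to $(C_0-1)h<r\cdot y+\tfrac{\delta\alpha}{2}|y_n|$. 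Varying $r$, this confines the $C_0h$-sub-level set of $u(\cdot,-t_0)$ (after subtraction of a suitable linear function) to a slab $\{|x_n|\lesssim h/(\delta\alpha)\}$.

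I would then pick two symmetric test points $y^\pm=\pm Te_n$ with $T\asymp h/(\delta\alpha)$ lying in the slab; at these points the angle gives $u(y^\pm,-t_0)\ge -h+\alpha T/2$. Around each $y^\pm$, combining $S_h(0,-t_0)$ with the slab confinement and Lemma~\ref{lem-c1a-1}, I construct a section of height $\asymp h$ that is $d''$-balanced around $y^\pm$ and compactly contained in $\Omega$; its volume is $\asymp |S_h|/\delta$, gaining a $\delta^{-1}$ factor in the $e_n$-direction from the slab's extent. Remark~\ref{rem-p2'} then yields
\[
u(y^\pm,-t)\;\ge\;u(y^\pm,-t_0)-Ch\;\ge\;-(C+1)h+\tfrac{\alpha T}{2},\qquad t\le t_0+c(d)\,\delta^{-2p}t_0,
\]
the $\delta^{-2p}$ arising from the $(|S_h|/\delta)^{2p}$ in the subsolution time bound and from the natural scaling $t_0\sim|S_h|^{2p}/h^{np-1}$ implicit in (ii) (obtained by applying Proposition~\ref{prop-p6} at $(0,-t_0)$ and using $u(0,0)=0$). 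Convexity of $u(\cdot,-t)$ together with the propagated bounds at $y^\pm$ then yields an angle $\max\{\tilde q_1\cdot x,\tilde q_2\cdot x\}-C_0^2h$ below $u(\cdot,-t)$; a short interpolation through $(y^\pm,-(C+1)h+\alpha T/2)$ shows that its $e_n$-gap is $\alpha$ scaled by at least $T/(T+O(h))\ge 1/(1+\delta/2)$ once $T$ is chosen as above.

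The main obstacle is the second step: simultaneously arranging $d''$-balance, compact containment in $\Omega$, and the volume scaling $\asymp|S_h|/\delta$ for the sections at $y^\pm$, by taking convex hulls of John-type ellipsoids around $y^\pm$ with the slice of $S_h$ that lies inside the slab and then rebalancing via Lemma~\ref{lem-c1a-1}. The extra slack $C_0^2$ (instead of $C_0$) in the depth precisely absorbs the $d$-dependent John's-lemma constants from these normalizations together with the doubling inherent in Proposition~\ref{prop-p2}.
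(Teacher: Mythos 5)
Your overall strategy coincides with the paper's: normalize the angle to $-h+\frac{\alpha}{2}|x_n|$, use the supersolution estimate at the origin to get $t_0\le C(d)\,(|S_h'|\,h/\alpha)^{2p}/h^{np-1}$ with $S_h'=S_h\cap\{x_n=0\}$, use the negation of $(C_0h,(1+\delta)\alpha)\in A_{-t_0}$ to enlarge the relevant convex set by a factor $\delta^{-1}$ in the $e_n$-direction, propagate backward in time with the subsolution estimate, and reconstruct the angle by convexity. However, two steps do not work as written. First, the geometry of the negation is inverted. It is the first part of (i) together with the $d$-balance of $S_h$ that confines $S_h$ to a slab, of width $\sim d\,h/\alpha$ (not $h/(\delta\alpha)$), giving $|S_h|\le Cd\,\frac{h}{\alpha}|S_h'|$. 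The negation does the opposite: testing the one-sidedly steepened angle $\max\{-\frac{\alpha}{2}x_n,(\frac{\alpha}{2}+\delta\alpha)x_n\}$ (whose gap is still $(1+\delta)\alpha$; your symmetric family only yields a witness on one unspecified side) produces a point $s_1e_n$ with $s_1\ge s_0:=(C_0-1)h/(\alpha\delta)$ where $u$ dips below $-C_0h+\frac{\alpha}{2}(1+2\delta)s_1$, and convexity along the line then places the whole segment $[0,s_0e_n]$ inside $\{u(\cdot,-t_0)<(C_0-1)h+q'\cdot x'+\frac{\alpha}{2}x_n\}$. The set to use is the convex hull of $S_h'$ and this segment, which has volume $\gtrsim |S_h'|s_0\gtrsim\delta^{-1}|S_h|$ and is balanced around the midpoint $s_0e_n/2$ — that is where $y^{+}$ must be placed. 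A "slab confinement" gives no volume gain, and a convex hull taken around a point at the far end of the low set is not balanced around that point.

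Second, you never control the vertex of the reconstructed angle. The slope gap $\ge\alpha/(1+\delta/2)$ does follow, as you indicate, from the propagated lower bound at $\pm s_0e_n/2$ together with $u(0,-t)\le 0$. But the conclusion also requires the two tangent planes at $\pm s_0e_n/2$ to pass above $-C_0^2h$ at the origin, and for that you need an upper bound on their slopes, which comes from $u(\pm s_0e_n,-t)\le u(\pm s_0e_n,-t_0)\le\frac{\alpha}{2}s_0+(C_0-1)h$ — precisely the convexity consequence of the witness point that is absent from your first step (you only record the lower bound $u(y^{\pm},-t_0)\ge -h+\alpha T/2$). Saying that the slack between $C_0$ and $C_0^2$ "absorbs the constants" does not substitute for this computation: without the upper bound the tangent planes could be arbitrarily steep and the vertex arbitrarily low.
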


\begin{rem}
From the proof we will see that we can take the constant
$C_0=100$.
\end{rem}

\begin{proof} Since   $(h,\alpha) \in A_{-t_0}$,  we have
$u(x,t) \geq - h + \max \, \{ q_1\cdot x,q_2 \cdot x \}$, for some
vectors $q_1, q_2$. Without loss of generality, we may assume that
$q_1, q_2$ have only components in the $e_n$ direction. This
reduction is possible by first subtracting the linear map
$\frac{q_1+q_2}{2}\cdot x$ and then performing a linear
transformation that leaves $e_n$ invariant. Thus, assume that
$$u(x,-t_0) \geq -h + \frac \alpha 2 \, |x_n|.$$
Since $S_h$ is $d$-balanced, the inequality above and Remark
\ref{rem-fs1} imply that
$$S_{h} \subset \{  \, |x_n| < 4d \, \frac {h}{\alpha} \, \}.$$
Thus, if $S'_h:= S_h \cap \{ \, x_n =0\, \}$, we have
$$
|S_h| \leq Cd \, \frac {h}{\alpha} \, |S_h'|.$$ Since $u(0,0) =0$
and $u(0,-t_0) \geq -h$, Proposition \ref{prop-p6} implies that
$$t_0 \leq C(d) \, \frac{|S_h|^{2p}}{h^{np-1}},$$
and from the previous estimate we have
\begin{equation}\label{eqn-angle2}
t_0 \leq C(d) \, \frac{( |S'_h| \, \frac h\alpha)^{2p}}{h^{np-1}}.
\end{equation}

On the other hand, since $(C_0\, h, (1+\delta) \, \al ) \notin
A_{-t_0}$ there exists $s_1 e_n \in \Omega$ with $s_1 >0$, such
that
$$u(s_1\, e_n, - t_0) < - C_0\, h + \frac \al 2 \, (1+2\, \delta)\, s_1.$$
Otherwise the angle with vertex at $-C_0$ and lines of slopes
$-\al/2$, $\al/2 + \delta \al$ would be below the graph of $u(x,
-t_0)$ on the line $x=se_n$ and we reach a contradiction.

Since $u(s_1\, e_n, - t_0) \geq - h + \frac \al 2\, s_1$, the
above yields the bound
$$s_1 \geq \frac{(C_0-1)\, h}{\al \, \delta}:=s_0.$$
Moreover, since $u(0,-t_0) \leq u(0,0) \leq 0$ and $$u(s_1\, e_n,
- t_0) < - C_0\, h + \frac \al 2 \, (1+2\, \delta)\, s_1 < \frac
\al 2 \, (1+2\, \delta)\, s_1$$ the convexity of $u(\cdot,-t_0)$
implies that
$$u(s \, e_n, - t_0) < \frac \al 2 \, (1+2\, \delta)\, s, \qquad \forall s \in [0,s_0]\subset [0,s_1].$$
Hence, if $s \in [0,s_0]$, then
$$u(s \, e_n, - t_0) < \frac \al 2 \, s + \al \, \delta \, s_0 = (C_0-1)\, h + \frac \alpha 2 \, s.$$
Recalling that $ S_h := \{ u(x,-t_0) < h + q'\cdot x' + q_n \, x_n
\, \}$, it follows from the above discussion that the set
$$ \{  u(x,-t_0) < (C_0-1)\, h + q'\cdot x' + \frac \al 2 \, x_n \, \}$$
contains the convex set $\tilde S$ which is generated by $S_{h}':=
S_h \cap \{\, x_n =0 \}$ and the segment $ [0,s_0e_n]$. It follows
from the convexity of $\tilde S$ that
\begin{equation}\label{eqn-angle3}
|\tilde S| \geq c_n  \, |S_h'| \, s_0 =c_n \, |S_h'| \,  \frac {
(C_0-1)\, h} {\al \, \delta}
\end{equation}
for some universal $c_n>0$.

We apply  Proposition \ref{prop-p2} (see Remark \ref{rem-p2}) for
$\tilde S$ which is $Cd$-balanced around $s_0e_n /2$ and with
$\tilde h=C_0h$, $\tilde \delta=1/30$, and find that (since $C_0
\ge 100$)
$$u(\frac{s_0\, e_n}2,-t) \geq - h + \frac \al 2 \, \frac{s_0}{2} - \frac{C_0\, h}{30} \geq \frac \al 2\, (1-\frac \delta 5) \, \frac {s_0}2$$
for
$$ - t_0 - c(d)\,
\frac {|\tilde S|^{2p}}{h^{np-1}} \leq -t \leq -t_0.
$$
Observing that a similar consideration holds for  negative $x_n$
and using \eqref{eqn-angle3} we conclude
$$u(\pm \frac{s_0\, e_n}2,-t)  \geq \frac \al 2\, (1-\frac \delta 5) \, \frac {s_0}2$$
for
$$ - t_0 - c(d)\,
 \frac{\left ( |S_h'| \, \frac { (C_0-1)\, h} {\al \, \delta} \right )^{2p} }{h^{np-1}} \leq -t \leq
 -t_0,
$$
or, from \eqref{eqn-angle2}, for $$ -t_0-c(d)\delta^{-2p}t_0 \le
-t \le -t_0.$$ It follows that for such $t$ we have (since
$u(0,-t)\le 0$)
$$\nabla u(\pm \frac{s_0\, e_n}2,-t) \cdot ( \pm e_n)  \geq \frac  \al 2\, (1-\frac \delta 5). $$
Setting
$$\tilde q_1 = \nabla u( \frac{s_0\, e_n}2,-t) \quad \mbox{and} \qquad \tilde q_2 = \nabla u(-\frac{s_0\, e_n}2,-t)$$
we obtain
$$ (\tilde q_1 - \tilde q_2) \cdot e_n \geq \al \,  (1-\frac \delta 5) \geq \frac \al{1 + \frac \delta 2}$$
since $\delta \leq  1$. From the convexity of $u(\cdot, -t)$ and
the inequalities
$$u(s_0\, e_n, -t) \leq u(s_0\, e_n, -t_0) \leq \frac \alpha 2\,
s_0 + (C_0-1)\, h$$
$$u(\frac{s_0e_n}{2},-t) \ge \frac \alpha 2\, \frac{s_0}{2}-\frac{C_0-1}{20}h  $$
we conclude that the tangent planes at $\pm \frac{s_0\, e_n}2$ for
$u(\cdot, -t)$ are above $-2C_0h$ (and therefore $-C_0^2 h$) at
the origin. This implies that
$$\left (C_0^2\, h, \frac {\al}{1+ \frac \delta 2}  \right ) \in A_{-t}, \qquad \mbox{if} \quad t_0 \leq t \leq t_0 +c(d)  \, \delta^{-2p}\, t_0$$
which finishes the proof of the proposition.

\end{proof}

\begin{rem}\label{rem-p_angle}
If hypothesis ii) is satisfied only for a time $-\tilde t$ with
$\tilde t \le t_0$ i.e
$$S_h:=\{u(x,-\tilde t) \le h + q \cdot x \} \subset \subset \om \quad \quad \mbox {and $S_h$ is $d$-balanced around 0,}$$
then the same conclusion holds in the smaller time interval
$$\left (\, h , \frac \al{1+ \frac \delta 2} \right ) \in A_{-t},
\qquad \mbox{for}\,\,\, t_0 \leq t \leq t_0+ c(d) \,
\delta^{-2p}\, \tilde t. $$

Indeed, the only difference appears when estimating $|S_h'|$ from
below: in (\ref{eqn-angle2}) we have to replace the left hand side
$t_0$ by $\tilde t$.

\end{rem}

\begin{rem}\label{rem-angle5} If $x^*$ denotes the center of mass of the $d$-balanced section $S_h$ at time $-t_0$, then  it follows from the proof of Proposition
\ref{prop-angle-main} and Remark \ref{rem-fs1} that
$$u(x,-t) \geq u(x^*,-t_0) - C(d) \, h + \max_{i=1,2} \, \{\tilde q_i \cdot (x-x^*) \}$$
for $t_0 \leq t \leq t_0 + c(d)  \, \delta^{-2p}\, t_0$, with
$$(\tilde q_1 - \tilde q_2) \cdot e_n \geq \frac {\alpha}{1+ \frac \delta 2}, \qquad \quad \tilde q_i = \nabla u \, (\frac {s_0e_n}2, -t ).$$
In other words, if $\tilde u$ is the translation of $u$ defined by
\begin{equation*}
\tilde u(x,t) = u(x+x^*, t-t_0) - u(x^*,-t_0)
\end{equation*}
then
$$\left (C(d)\, h , \frac \al{1+ \frac \delta 2} \right ) \in A_{-t}(\tilde u), \qquad \mbox{for}\,\,\,
0 \leq t \leq c(d)  \, \delta^{-2p}\, t_0. $$
\end{rem}

\smallskip

We will now proceed to the proof of Theorem \ref{thm-angle}.

\begin{proof}[Proof of Theorem \ref{thm-angle}] We will denote throughout the proof
by $u_0:=u(\cdot,0)$. Since $u_0 \geq 0$, and $0$ is an extremal
point for the set $D=\{ u_0 =0 \, \}$ we can find (as in the proof
of Theorem \ref{thm-fs1}) $\sigma_0:=\sigma_0(u)
>0$ small, depending on $u$, such that if $0 \leq h,t \leq
\sigma_0$ then the section $$T_{h,-t}:=\{u(x,-t)\le h + q \cdot
x\}$$ of $u(\cdot,-t)$ that has $x=0$ as center of mass is
compactly supported in $\om$. Thus, by John's lemma  $T_{h,-t}$ is
$C_n$-balanced with respect to the origin.

Let $ 0 < \delta < \delta_0$ with $\delta_0$ small universal
constant to be made precise later. Without loss of generality we
may assume that $u_0$ is tangent to $|x_n|$ on the line $x'=0$ at
the origin, i.e. we have
\begin{equation}\label{eqn-lip}
\lim_{x_n \to 0^+} \frac{u_0(0,x_n)}{x_n}=1 \qquad \mbox{and}
\qquad \lim_{x_n \to 0^-} \frac{u_0(0,x_n)}{x_n}=-1.
\end{equation}
Hence, by  taking $\sigma_1=\sigma_1(\delta,u)$ smaller than
$\sigma_0$, depending also on $\delta$, we can assume that
$$\left (\tilde h, 2\, (1+\frac \delta 2) \right ) \notin A_0, \qquad \mbox{for} \,\,\, \tilde h \leq \sigma_1.$$
Choose $h << \sigma_1$. Since $u_0$ is Lipschitz  in say $B_a
\subset \Omega$ with $|\nabla u_0| < 1/a$, for some small $a$ we
find (using Proposition \eqref{prop-p2}) that at time $-t_0$,
given by
$$t_0:=c(a) \, \frac{h^{n2p}}{h^{np-1}}= c(a)\, h^{np+1}$$
the we have $u(x,-t_0) \ge u_0(x) -h$ for $x \in B_a$. This easily
implies
\begin{equation}\label{eqn-angle-step0}
(h,\al) \in A_{-t_0}, \qquad \al:=2\, (1-\frac 1a \,h).
\end{equation}
Also notice that
$$\left (\tilde h, \al \, (1+ \delta) \right ) \notin A_0, \qquad \mbox{if} \,\,\, h, \tilde h \leq \sigma_2=\sigma_2(a,\sigma_1).$$
We choose $\delta_0$ such that
$$M^2:= c(C_n) \, \delta^{-2p} \geq c(C_n) \, \delta_0^{-2p}:=C_0^{10(np+1)}$$
where $c(d)$ is the constant that appears in Proposition
\ref{prop-angle-main}.

\begin{lem}
As long as $M^k \, t_0 \leq \sigma_0$ and $C_0^{3k+1}\, h \leq
\sigma_2$, there exists $0 \leq m \leq k$ such that
\begin{equation}\label{eqn-angle-claim1}
\left ( C_0^{3k-m}\, h, \alpha \,\,  \frac 1{1+\frac \delta 2}
\cdots  \frac 1{1+\frac \delta {2^m}} \right  )\in A_{-M^k\, t_0}.
\end{equation}
\end{lem}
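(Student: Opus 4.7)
The plan is to prove the lemma by induction on $k$, using Proposition \ref{prop-angle-main} as the single tool at each step. The base case $k = 0$ is immediate from (\ref{eqn-angle-step0}): take $m = 0$, so that the empty product equals $1$ and the angle in the claim is just $\alpha$.

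For the inductive step, write $\alpha_j := \alpha \prod_{i=1}^{j} (1+\delta/2^i)^{-1}$, and choose $m^* \in [0,k]$ to be the \emph{smallest} index for which the step-$k$ conclusion $(C_0^{3k-m^*} h, \alpha_{m^*}) \in A_{-M^k t_0}$ holds. I would then apply Proposition \ref{prop-angle-main} at the origin and time $-M^k t_0$ with inputs $h \mapsto C_0^{3k-m^*} h$, $\alpha \mapsto \alpha_{m^*}$, and parameter $\delta \mapsto \delta_* := \delta/2^{m^*}$. With this choice the output height is $C_0^{2} \cdot C_0^{3k-m^*} h = C_0^{3(k+1)-(m^*+1)} h$ and the output angle is $\alpha_{m^*}/(1+\delta_*/2) = \alpha_{m^*+1}$, exactly the step-$(k+1)$ claim with new index $m' = m^*+1 \in [1,k+1]$.

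The verification of hypothesis (i) of the Proposition rests on the algebraic identity $(1+\delta_*)\alpha_{m^*} = \alpha_{m^*-1}$, valid for $m^* \geq 1$. The required non-membership $(C_0^{3k-m^*+1} h, (1+\delta_*)\alpha_{m^*}) \notin A_{-M^k t_0}$ then reads $(C_0^{3k-(m^*-1)} h, \alpha_{m^*-1}) \notin A_{-M^k t_0}$, which is exactly the minimality of $m^*$. For $m^* = 0$ the non-membership reads $(C_0^{3k+1} h, (1+\delta)\alpha) \notin A_{-M^k t_0}$; this follows from the standing assumption $(\tilde h, (1+\delta)\alpha) \notin A_0$ for $\tilde h \leq \sigma_2$, together with $C_0^{3k+1} h \leq \sigma_2$ and the time-monotonicity $A_{-M^k t_0} \subset A_0$. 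For hypothesis (ii), I would use the centered section $T_{C_0^{3k-m^*} h}(0, -M^k t_0)$, which by John's lemma is $C_n$-balanced about the origin and which is compactly contained in $\Omega$ because $M^k t_0 \leq \sigma_0$ and $C_0^{3k-m^*} h \leq C_0^{3k+1} h \leq \sigma_2 \leq \sigma_0$.

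It then remains to check that $-M^{k+1} t_0$ lies in the Proposition's output time window $[M^k t_0,\, M^k t_0(1 + c(C_n)\delta_*^{-2p})]$. Since $c(C_n)\delta_*^{-2p} = 2^{2pm^*} c(C_n)\delta^{-2p} = 2^{2pm^*} M^2 \geq M^2$, the upper endpoint comfortably exceeds $M^{k+1} t_0$ (as $M \geq 1$ gives $1 + M^2 \geq M$), so the membership at $-M^{k+1} t_0$ follows and the induction closes. The main obstacle is the indexing bookkeeping: one must align the Proposition's input/output formulas, which feature $(1+\delta_*)$ and $(1+\delta_*/2)$, with two consecutive entries of the sequence $(\alpha_m)$. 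This alignment is precisely what dictates the choice $\delta_* = \delta/2^{m^*}$ and also what converts the minimality of $m^*$ into the dichotomy hypothesis of the Proposition; everything else is routine once the sequence $\alpha_m$ is set up correctly.
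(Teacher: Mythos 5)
Your proposal is correct and follows essentially the same route as the paper's own proof: induction on $k$, taking $m$ minimal so that the claim holds, splitting into the cases $m>0$ (where minimality supplies the non-membership hypothesis of Proposition \ref{prop-angle-main} via the identity $(1+\delta/2^m)\alpha_m=\alpha_{m-1}$) and $m=0$ (where the standing assumption $(\tilde h,(1+\delta)\alpha)\notin A_0$ for $\tilde h\le\sigma_2$ is used together with monotonicity of $A_t$ in $t$). Your write-up is in fact slightly more explicit than the paper's about the choice $\delta_*=\delta/2^{m}$ and the time-window bookkeeping, but the argument is the same.
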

\begin{proof}We will use induction in $k$. When $k=0$ we take $m=0$ and
we use \eqref{eqn-angle-step0}. Assume now that the statement
holds for $k$ and let $m$ be the smallest so that
\eqref{eqn-angle-claim1} holds. If $m >0$, then
$$\left ( C_0^{3k-(m-1)}\, h, \alpha \, \, \frac 1{1+\frac \delta 2}  \cdots  \frac 1{1+\frac \delta {2^{m-1}}} \right  )\notin A_{-M^k\, t_0}.$$
Combining this with \eqref{eqn-angle-claim1}, and applying
Proposition \ref{prop-angle-main} we find that
$$\left ( C_0^{3k-m+2}\, h, \alpha \, \frac 1{1+\frac \delta 2}  \cdots
\frac 1{1+\frac \delta {2^{m+1}}} \right  )\in A_{-t}, \qquad
\mbox{if}\,\,\,  t \leq M^{k+2}\, t_0$$ which proves
\eqref{eqn-angle-claim1}  for the pair $(k+1,m+1)$.

If $m=0$, then $( C_0^{3k}\, h, \alpha ) \in A_{-M^k\, t_0}$. On
the other hand, since $C_0^{3k+1}\, h \le \sigma_2$ we have $ (
C_0^{3k+1}\, h, \alpha\, (1+\delta) ) \notin A_0$, thus $$(
C_0^{3k+1}\, h, \alpha\, (1+\delta) ) \notin A_{-M^k\, t_0}.$$
Hence, by Proposition \ref{prop-angle-main}
$$ (C_0^{3k+2}\, h, \alpha\, \frac 1{1+\frac \delta 2}   ) \in A_{-t}$$
for $t \leq M^{k+2}\, t_0$ which again proves
\eqref{eqn-angle-claim1}  for the pair $(k+1,1)$. This concludes
the proof of the lemma.
\end{proof}
We will now finish the proof of the theorem. Since $M \geq
C_0^{5(np+1)}$ and $t_0=c\, h^{np+1}$ we see that  for the last
$k$ for which $M^k t_0 \leq \sigma_0$ we satisfy
$$C_0^{3k+1} h \leq C_0 M^{\frac 3 5 \frac{k}{np+1}}h \le C(\sigma_0) \, h^\frac{2}{5} < \sigma_2$$
if  $h << \sigma_2$ is sufficiently small. Also, if $\delta$ is
chosen small, depending on $\sigma_0$ and $T$, for the last $k$ we
also have  $M^{k+2} \, t_0 \geq T$. We conclude from the lemma
above that
$$(C(\sigma_0)\, h^\frac{2}{5}, \alpha\, e^{-\delta}) \in A_{-T}$$
and by letting $h \to 0$ we obtain
$$(0, 2\, e^{-\delta}) \in A_{-T}.$$
Finally, letting $\delta \to 0$ we conclude that $(0,2) \in
A_{-T}$ which proves the theorem.
\end{proof}

\section{$C^{1,\alpha}$ regularity - I}\label{sec-c1ad}

In the next two sections we establish $C^{1,\alpha}$ interior
regularity of solutions to \eqref{eqn-pma}. They are based on
quantifying the result of Theorem \ref{thm-angle}. In the elliptic
case $C^{1,\alpha}$ regularity is obtained by a compactness
argument. However, in our setting compactness methods would only
give $C^1$ continuity for exponents $p\le \frac{1}{n-2}$. The
reason for this is that in the parabolic setting it is more
delicate to normalize a solution in space and time.

The main result of this section is the following Theorem (see
Definition \ref{def-c1a}).

\begin{thm}\label{thm-c1ad}
Let $u$ be a solution to \eqref{eqn-pma} in $\om \times [-T,0]$
and assume there exists a section of $u(x,0)$ which is
$d$-balanced around $0$ and is compactly supported in $\om$.

a)If the initial data $u(x,-T)$ is $C^{1,\beta}$ at $0$ in the $e$
direction then $u(x,0)$ is $C^{1, \alpha}$ at the origin in the
$e$ direction with $\alpha=\alpha(\beta,d)$ depending on $\beta$,
$d$ and the universal constants.

b) If $u(0,0)>u(0,-T)$ then $u(x,0)$ is $C^{1, \alpha}$ at the
origin with $\alpha=\alpha(d)$ depending on $d$ and the universal
constants.
\end{thm}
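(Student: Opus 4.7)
The plan is to quantify the angle-propagation argument of Theorem \ref{thm-angle}. Given any angle $(h,\alpha)\in A_0(u)$ at the origin---in the prescribed direction $e$ for part (a), in an arbitrary direction for part (b)---one iterates Proposition \ref{prop-angle-main} backward in time from $t=0$ to $t=-T$, and then reads off a quantitative constraint from the behavior of $u(\cdot,-T)$. After translating so that $u(0,0)=0$, and (in part (a)) subtracting a linear function so that the $C^{1,\beta}$-tangent of $u(\cdot,-T)$ at $0$ in the direction $e$ vanishes, giving $|u(se,-T)-u(0,-T)|\le K|s|^{1+\beta}$, the goal is to derive an estimate $\alpha\le Ch^{\gamma}$ with $\gamma>0$; this bound on the angle-opening translates into $C^{1,\alpha'}$ regularity with $\alpha'=\gamma/(1-\gamma)$.

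First I would transfer the angle from $t=0$ to a slightly earlier time by Corollary \ref{cor-p3}, obtaining $(2h,\alpha)\in A_{-t_0}$ with $t_0=ch^{np+1}$. Then I would run the inductive scheme of the proof of Theorem \ref{thm-angle}: at step $k$ (time $-M^{k}t_0$ with $M=1+c(d)\delta^{-2p}$) one obtains an angle of the form $(C_0^{3k-m_k}\cdot 2h,\;\alpha\prod_{i=1}^{m_k}(1+\delta/2^i)^{-1})\in A_{-M^{k}t_0}$ for some $m_k\in[0,k]$. Crucially, $\prod_{i=1}^{\infty}(1+\delta/2^i)^{-1}\ge c_\delta>0$, so the opening stays bounded below by $c_\delta\alpha$ throughout. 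Hypothesis (ii) of Proposition \ref{prop-angle-main} is satisfied at each step as follows: the given $d$-balanced section of $u(\cdot,0)$ at $0$ together with Lemma \ref{lem-c1a-1} yields $C_nd$-balanced sections of all smaller heights at $t=0$; to apply the proposition at later points and times one uses Remark \ref{rem-angle5} to re-center around the center of mass $x^{*}$ of the current section. If a recentered section fails to be $C_nd$-balanced then, by Lemma \ref{lem-c1a-1}(b), the maximal inscribed ellipsoid doubles in volume, which can happen only boundedly many times.

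After $k^{*}\sim(np+1)\log(1/h)/\log M$ iterations one reaches time $-T$ with an angle $(h_{k^{*}},\alpha_{k^{*}})\in A_{-T}$ satisfying $h_{k^{*}}\le C_0^{3k^{*}}\cdot 2h$ and $\alpha_{k^{*}}\ge c_\delta\alpha$. For part (a), restricting to the line $se$ and combining the lower bound $u(se,-T)\ge u(0,0)-h_{k^{*}}+\max\{c_1 s,c_2 s\}$ (with $c_1-c_2\ge\alpha_{k^{*}}$) against the $C^{1,\beta}$ upper bound $u(se,-T)\le u(0,-T)+K|s|^{1+\beta}$, a direct maximization in $s$ gives $|c_i|\le C(Kh_{k^{*}})^{\beta/(1+\beta)}$ and hence
\[
c_\delta\alpha\le\alpha_{k^{*}}\le 2C(Kh_{k^{*}})^{\beta/(1+\beta)}.
\]
Substituting the values of $h_{k^{*}}$ and $k^{*}$ yields $\alpha\le C h^{\gamma}$ with $\gamma=(\beta/(1+\beta))\bigl[1-3(np+1)\log C_0/\log M\bigr]$; choosing $\delta$ small so that $\log M\gg\log C_0$ makes $\gamma>0$. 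For part (b), the angle evaluated at $x=0$ forces $h_{k^{*}}\ge c_0:=u(0,0)-u(0,-T)>0$, and combined with $h_{k^{*}}\le C_0^{3k^{*}}\cdot 2h$ this rules out any nontrivial angle once $h<h^{*}(c_0,T,d)$; for $h\ge h^{*}$ the Lipschitz bound at $-T$ (propagated back through the iteration) gives $\alpha\le L/c_\delta$. Packaging both ranges, $\alpha_{\max}(h)\le C(c_0,T,d,L)h^{\gamma}$ for any preassigned $\gamma>0$, which delivers $C^{1,\alpha}$ regularity with $\alpha=\alpha(d)$.

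The hard part is the careful bookkeeping of hypothesis (ii) of Proposition \ref{prop-angle-main} throughout the iteration: the center $x^{*}$ drifts and so does the time, so one must control uniformly---via Lemma \ref{lem-c1a-1} and Remark \ref{rem-angle5}---the availability of $C_nd$-balanced sections compactly contained in $\Omega$ at every step, absorbing the occasional failures through the geometric doubling of Lemma \ref{lem-c1a-1}(b). A secondary obstacle is the balancing of two competing quantitative effects in the choice of $\delta$: it must be small enough that $\log M$ dominates $\log C_0$ (to ensure $\gamma>0$), yet not so small that the universal contraction $c_\delta$ of the opening collapses to zero.
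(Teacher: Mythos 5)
There is a genuine gap at the heart of your iteration. You transplant the inductive scheme from the proof of Theorem \ref{thm-angle} and assert that at step $k$ one has an angle of the form $(C_0^{3k-m_k}\cdot 2h,\ \alpha\prod_{i=1}^{m_k}(1+\delta/2^i)^{-1})\in A_{-M^kt_0}$, with the opening only ever decreasing through the convergent product. But each application of Proposition \ref{prop-angle-main} needs hypothesis (i): a slightly larger angle must \emph{fail} to lie in $A_{-t}$. In Theorem \ref{thm-angle} the $m=0$ case of the induction gets this for free from the exact tangency of the initial angle ($(\tilde h,\alpha(1+\delta))\notin A_0$ for $\tilde h\le\sigma_2$); in the setting of Theorem \ref{thm-c1ad} no such input exists. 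When $(C_0h',(1+\delta)\alpha')\in A_{-t}$ one cannot apply the proposition, and one cannot push the existing angle backward in time either (membership in $A_{-t}$ is \emph{not} monotone in the direction you need); the only way out is to upgrade to the larger angle, paying a factor $C_0$ in height each time, and to repeat until the ``not in'' condition finally holds. The paper's Lemma \ref{c1a-l3} exists precisely to do this bookkeeping: the threshold function $\eta$ and the triples $(r_1,r_2,r_3)$ with $r_1-r_2+r_3=3m$, $r_1+r_3\le s_m$ record how many upgrades occur, and the hypothesis $3m_0\le k-l-C_1(d)$ bounds their total number by $l+C_1(d)$, where $l$ measures the opening of the \emph{starting} angle. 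Your bounds $h_{k^*}\le C_0^{3k^*}\cdot 2h$ and $\alpha_{k^*}\ge c_\delta\alpha$ omit this $C_0^{\#\text{upgrades}}$ penalty entirely, and the penalty cannot be bounded without relating it to the starting opening.

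That the omission is fatal is visible in your part (b): your chain $(h,\alpha)\in A_0\Rightarrow h_{k^*}\ge c_0\Rightarrow C_0^{3k^*}\cdot 2h\ge c_0$ never uses $\alpha$, so it would ``rule out any nontrivial angle'' $(h,\alpha)\in A_0$ for all small $h$ and \emph{every} $\alpha>0$. This is false already for $u(x,t)=|x|^2+t+T$ (a solution with $u(0,0)-u(0,-T)=T>0$), for which $(h,4\sqrt h)\in A_0$ for every $h$; there, hypothesis (i) fails at every step (since $4\sqrt{C_0h}>(1+\delta)4\sqrt h$), the upgrades never terminate before the height reaches unit scale, and the iteration never launches. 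The correct conclusion, as in the paper, is the conditional statement $l\ge\eps k$ for $(C_0^{-k},(1+\delta_0)^{-l})\in A_0$, proved by contradiction from $l<\eps k$ so that the upgrade budget $l+C_1(d)$ is small relative to $k$; via Corollary \ref{cor-c1a} this is exactly $C^{1,\al}$ regularity. A secondary remark: the re-centering via Remark \ref{rem-angle5} and the volume-doubling of Lemma \ref{lem-c1a-1}(b) that you invoke are not needed for this theorem (where $\al$ may depend on $d$); they belong to the refinement in Section 8 (Lemma \ref{c1a-p5}) that makes $\al$ universal.
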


Part $b)$ will be improved in Theorem \ref{thm-c1a4} in which we
show that $\al$ can be taken to be a universal constant. As a
consequence we obtain Theorem \ref{thm-fb}.

\begin{proof}[Proof of Theorem \ref{thm-fb}] In view of Remark \ref{rem-fs1}, at a point $(x,t)$
for which $u(x,t) \le c_n$, with $c_n$ small depending only on $n$
the centered section $T_{h}(x,t)$ at $x$, for small $h$, is
compactly supported in $\Omega$. Clearly $u(x,0)$ is $C^{1,1}$ at
an interior point of the set $\{u(x,0)=0\}$. Thus we can apply
Theorem \ref{thm-c1ad} with $d$ depending only on $n$ and
$\beta=1$ and obtain the desired result. If $c_n<u(x,t)<1$ then we
can apply directly Theorem \ref{thm-c1a4} and obtain the same
conclusion. The second part of the theorem follows similarly.

\end{proof}

The following simple lemma gives the relation between the sets
$A_t$ defined in Definition \ref{def-A} and $C^{1,\al}$
regularity. Its proof is straightforward and is left to the
reader.

\begin{lem}\label{c1a-lem7} Let  $f: \R \to \R$ be a convex function with $f(0)=0$ and let $q$ be
a sub-gradient of $f$ at $x=0$. If,  for some $x$,  we have $f(x)
- q \cdot x \geq a\, |x|^{1+\al}$, then
\begin{equation}\label{eqn-c1a-9}
(h,a^{\frac 1{1+\al} } \, h^{\frac \al{1+\al}} ) \in A (f)
\end{equation}  with $ h=a\, |x|^{1+\al}.$
Conversely, if for some number $h$,  \eqref{eqn-c1a-9} holds, then
$$f(x) - q \cdot x \geq \frac a{4^{\al +1}} \, |x|^{1+\al}$$ for
some $x$ with $|x| = 4\, ( \frac ha )^{\frac 1{\al +1}}$.
\end{lem}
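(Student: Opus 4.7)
The plan is to prove both implications by elementary convexity arguments, exhibiting explicit supporting lines for the forward direction and evaluating the angle at a well-chosen point for the converse; no deep machinery is needed.

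First I would reduce to the case $q=0$ by replacing $f$ with $g(x) := f(x) - q\cdot x$. This is a convex function with $g(0)=0$, $g \geq 0$ on $\R$ (since $q$ is a subgradient at $0$), and $0 \in \partial g(0)$. Both the hypothesis $f(x) - q\cdot x \geq a|x|^{1+\al}$ and the membership condition in $A$ are invariant under this subtraction, so we may work with $g$.

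For the forward direction, assume without loss of generality $x_0 > 0$ and $g(x_0) \geq a\, x_0^{1+\al}$. I would take
$$q_1 := a\, x_0^\al, \qquad q_2 := 0, \qquad h := a\, x_0^{1+\al},$$
and verify $g(x) \geq \max\{q_1 x, q_2 x\} - h$ on all of $\R$. For $x \leq 0$ we have $\max\{q_1 x, q_2 x\} = 0$ and $g \geq 0 \geq -h$. On $[0,x_0]$ we have $q_1 x - h \leq q_1 x_0 - h = 0 \leq g(x)$. For $x \geq x_0$ I would invoke the convexity inequality $g(x) \geq g(x_0) + (g(x_0)/x_0)(x - x_0)$, combined with $g(x_0)/x_0 \geq a\, x_0^\al = q_1$, yielding $g(x) \geq q_1 x \geq q_1 x - h$. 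The slope difference $q_1 - q_2 = a\, x_0^\al$ equals $a^{1/(1+\al)} h^{\al/(1+\al)}$ after an exponent check, which is precisely \eqref{eqn-c1a-9}.

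For the converse, assume $(h,\al_0) \in A(g)$ with $\al_0 = a^{1/(1+\al)} h^{\al/(1+\al)}$, witnessed by slopes $q_1 \geq q_2$. From $q_1 - q_2 \geq \al_0 > 0$, at least one of $q_1 \geq \al_0/2$ or $q_2 \leq -\al_0/2$ must hold. Put $x_* := 4\,(h/a)^{1/(1+\al)}$, so that $(a/4^{1+\al})|x_*|^{1+\al} = h$. In the first case, the defining inequality $g(x) \geq \max\{q_1 x, q_2 x\} - h$ evaluated at $x=x_*$ gives $g(x_*) \geq q_1 x_* - h \geq (\al_0/2)x_* - h$; a direct computation yields $(\al_0/2)x_* = 2h$, hence $g(x_*) \geq h$. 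The second case is symmetric at $x = -x_*$. Translating back via $g = f - q\cdot x$ produces the desired lower bound.

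The only subtlety is the two-case split in the converse, since the $A$-angle may be essentially one-sided; the rest is bookkeeping matching $h = a\,|x|^{1+\al}$ with $\al_0 = a\,|x|^\al$.
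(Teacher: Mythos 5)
Your proof is correct, and it is exactly the elementary one-dimensional convexity argument the authors intend (the paper explicitly omits the proof as ``straightforward and left to the reader''): normalize by subtracting the subgradient, build the angle from the chord slope $a|x_0|^{\alpha}$ in one direction and the trivial slope $0$ in the other, and in the converse evaluate the angle at $|x|=4(h/a)^{1/(1+\alpha)}$ after the one-sided split $q_1\ge \alpha_0/2$ or $q_2\le-\alpha_0/2$. The exponent bookkeeping ($(\alpha_0/2)x_*=2h$ and $\tfrac{a}{4^{1+\alpha}}x_*^{1+\alpha}=h$) checks out, so nothing further is needed.
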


 As a consequence we obtain the following useful corollary.
\begin{cor}\label{cor-c1a}
The function $u(x,0)$ is $C^{1, \alpha}$ at $0$ in the $e_n$
direction if and only if $$(h, C h^\frac{\alpha}{\alpha+1}) \notin
A_0$$ for some large $C$ and for all small $h$.
\end{cor}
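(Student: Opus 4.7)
The plan is to reduce Corollary~\ref{cor-c1a} to Lemma~\ref{c1a-lem7} via restriction to the line $s\mapsto se_n$. Let $f(s):=u(se_n,0)-u(0,0)$; this is a one-dimensional convex function with $f(0)=0$, and if $q$ denotes a sub-gradient of $f$ at $0$, then by Definition~\ref{def-c1a} together with convexity of $f$ (which gives $f(s)-qs\ge 0$), $u(\cdot,0)$ is $C^{1,\alpha}$ at $0$ in the direction $e_n$ if and only if $f(s)-qs\le K|s|^{1+\alpha}$ for some constant $K$ and all small $s$.

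The main work is to identify $A_0(u)$ with the one-dimensional set $A(f)$ appearing in Lemma~\ref{c1a-lem7}. The inclusion $A_0(u)\subseteq A(f)$ is immediate: restricting the angle inequality of Definition~\ref{def-A} to $x=se_n$ and setting $\beta_i:=q_i\cdot e_n$ yields the 1D inequality $f(s)\ge -h+\max\{\beta_1 s,\beta_2 s\}$ with $\beta_1-\beta_2\ge\alpha$. For the reverse inclusion $A(f)\subseteq A_0(u)$, I would take $\beta_1,\beta_2$ to be the slopes of supporting lines to the graph of $f$ through the vertex $(0,-h)$, tangent at points $s_1^*e_n$ and $s_2^*e_n$. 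Selecting sub-gradients $p_i\in\partial u(\cdot,0)(s_i^*e_n)$ with $p_i\cdot e_n=\beta_i$ (which exist because the projection of this subdifferential onto $e_n$ coincides with the 1D subdifferential of $f$ at $s_i^*$), the tangency relation $f(s_i^*)-\beta_i s_i^*=-h$ ensures that the supporting hyperplane $y=u(s_i^*e_n,0)+p_i\cdot(x-s_i^*e_n)$ passes through $(0,u(0,0)-h)$; by convexity it lies below $u(\cdot,0)$ throughout $\Omega$. Hence $(h,\alpha)\in A_0(u)$ with $q_i:=p_i$. This geometric lift from 1D to $n$-D is the main obstacle in the proof; it is essentially the content of the remark following Definition~\ref{def-A}.

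With the equivalence $A_0(u)=A(f)$ in hand, Corollary~\ref{cor-c1a} becomes a direct translation of Lemma~\ref{c1a-lem7} under the parameter correspondence $a=C^{1+\alpha}$. For the forward implication, if $u$ is $C^{1,\alpha}$ with constant $K$ and $(h,Ch^{\alpha/(\alpha+1)})\in A_0(u)=A(f)$ for some small $h$, the converse part of Lemma~\ref{c1a-lem7} produces $s$ with $|s|=4h^{1/(1+\alpha)}/C$ (hence small) at which $f(s)-qs\ge (C/4)^{1+\alpha}|s|^{1+\alpha}$, forcing $(C/4)^{1+\alpha}\le K$; therefore any $C>4K^{1/(1+\alpha)}$ satisfies the required condition for all small $h$. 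For the reverse implication, failure of $C^{1,\alpha}$ produces, for any prescribed $C$, a small $s$ with $f(s)-qs>C^{1+\alpha}|s|^{1+\alpha}$; the forward part of Lemma~\ref{c1a-lem7} then places $(h,Ch^{\alpha/(\alpha+1)})$ in $A(f)=A_0(u)$ for the small value $h=C^{1+\alpha}|s|^{1+\alpha}$, using also the obvious downward closure of $A(f)$ in its second argument. This contradicts the hypothesis and completes the proof.
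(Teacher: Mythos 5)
Your proof is correct and follows the paper's intended route: the corollary is meant as a direct consequence of Lemma \ref{c1a-lem7} combined with the one-dimensional reduction of the condition $(h,\alpha)\in A_0$ recorded in the remark after Definition \ref{def-A}, and your parameter translation $a=C^{1+\alpha}$ in both directions is exactly what is needed. The only detail to polish is the lift $A(f)\subseteq A_0(u)$: the extremal supporting lines through $(0,-h)$ need not touch the graph at an interior point of $\Omega\cap\R e_n$, so in that degenerate case one should instead obtain $q_i$ by separating the line $\{(se_n,\beta_i s):s\in\R\}$ from the epigraph of $u(\cdot,0)-u(0,0)+h$ in $\R^{n+1}$ --- a routine patch that does not change the argument.
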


Theorem \ref{thm-c1ad} will follow from the following lemma.

\begin{lem}\label{c1a-l3}  Assume that $u: \om \times [-T,0] \to \R$  is a solution of \eqref{eqn-pma} such that $u(0,0)=0$, $u(x,-T)> 1$ on $\partial \om$, and
\begin{equation}\label{eqn-c1a-6}
B_{\frac 1d} \subset \{ \,  u(x,0) < 1 \, \} \subset \{ \, u(x,-T)
< 1 \, \} \subset B_1.
\end{equation}
Choose $\delta_0(d)$ sufficiently small, so that
\begin{equation}\label{eqn-c1c-11}
c(C_nd) \, \delta_0^{-2p} = C_0^{12\, (np+1)}:=M,
\end{equation}
where $c(C_nd)$ and $C_0$ are the constants from Proposition
\ref{prop-angle-main} and $C_n$ the constant from Lemma
\ref{lem-c1a-1}. Assume also that $(C_0^{-k}, (1+\delta_0)^{-l})
\in A_{-t_0}$, for some $k \geq 0$ and some $0<t_0 \le T$.

 There exists a
constant $C_1(d)
>0$ such that if $m_0$ is an integer satisfying $$3 \, m_0 \le k -l- C_1(d) \quad \quad \mbox{and} \quad  M^{m_0} \, t_0 \geq T,$$ then
$$\left(C_0^{C_1(d) + l + 3\, m_0-k}, (1+\delta_0)^{-l-C_1(d)}\right) \in A_{-T}.$$
\end{lem}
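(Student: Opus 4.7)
The plan is to prove the lemma by induction on $m_0$, using one application of Proposition \ref{prop-angle-main} per inductive step with $\delta=\delta_0$, which by the calibration \eqref{eqn-c1c-11} advances time by at most a factor $M$. Before iterating, I would verify hypothesis (ii) of the proposition at every time $-t\in[-T,0]$: by monotonicity of $u$ in $t$, the inclusion \eqref{eqn-c1a-6} gives $B_{1/d}\subset\{u(\cdot,-t)<1\}\subset B_1\subset\subset\om$, so the section is $d$-balanced around $0$ and compactly supported, and Lemma \ref{lem-c1a-1} then produces $(C_nd)$-balanced sections of $u(\cdot,-t)$ at every smaller height.

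For the base case $m_0=0$, we have $t_0\ge T$ and monotonicity of $A$ in $t$ gives $(C_0^{-k},(1+\delta_0)^{-l})\in A_{-t_0}\subset A_{-T}$; taking $C_1(d)$ larger than the Lipschitz-induced upper bound on $|l|$, monotonicity of $A$ in $(h,\alpha)$ (larger $h$ and smaller $\alpha$ preserve membership) weakens this to the target pair.

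For the inductive step $m_0\to m_0+1$, set
\[
J_0:=\max\bigl\{\,j\ge 0:(C_0^{-k+j},(1+\delta_0)^{-l+j})\in A_{-t_0}\,\bigr\},
\]
finite thanks to the Lipschitz bound on $\alpha$. By maximality the $(J_0+1)$-st upgrade fails, so Proposition \ref{prop-angle-main} with $\delta=\delta_0$ applied to the maximally upgraded pair yields
\[
\bigl(C_0^{-k+J_0+2},\,(1+\delta_0)^{-l+J_0-\rho}\bigr)\in A_{-Mt_0},\qquad \rho:=\frac{\log(1+\delta_0/2)}{\log(1+\delta_0)}\in(0,1).
\]
Setting $k'=k-J_0-2,\, l'=l-J_0+\rho,\, t_0'=Mt_0$, the hypotheses of the lemma hold for $(m_0,k',l',t_0')$ ($3m_0\le k'-l'-C_1$ since $\rho<1$ and $3(m_0+1)\le k-l-C_1$, together with $M^{m_0}t_0'=M^{m_0+1}t_0\ge T$), so the inductive hypothesis applies. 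Substituting back and using monotonicity of $A$ in $(h,\alpha)$ then yields the target pair for $m_0+1$, provided $J_0\ge\rho$, i.e., $J_0\ge 1$.

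The main obstacle is the degenerate sub-case $J_0=0$, where the Prop step creates a fractional deficit of $\rho$ in the $l$-budget per step which cannot be recovered by weakening. I expect to handle this by inflating $C_1(d)$ to absorb the worst-case cumulative deficit $m_0(1-\rho)$, whose control ultimately comes from the constraint $3m_0\le k-l-C_1$, or equivalently by grouping consecutive Prop steps into blocks within which at least one free upgrade must occur by a pigeonhole applied to the physical Lipschitz cap on $\alpha$; everything else is routine bookkeeping in $(k,l,t)$.
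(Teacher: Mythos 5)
Your overall architecture (iterate Proposition \ref{prop-angle-main}, one application per factor-$M$ advance in time, with bookkeeping in $(k,l)$) is the right starting point and matches the paper's, and your base case and your treatment of hypothesis (ii) via \eqref{eqn-c1a-6} and Lemma \ref{lem-c1a-1} are fine. But the ``degenerate sub-case'' you flag at the end is not a loose end to be absorbed into constants --- it is the entire difficulty of the lemma, and neither of your proposed remedies works. If $J_0=0$ at every step (which nothing forbids), each application of the proposition with fixed $\delta=\delta_0$ multiplies the angle by $(1+\delta_0/2)^{-1}$, so after $m_0$ steps the angle has been degraded by $(1+\delta_0/2)^{-m_0}$, i.e.\ your $l$-budget carries a deficit of order $m_0\rho$. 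This cannot be absorbed by inflating $C_1(d)$: the constraint $3m_0\le k-l-C_1(d)$ does not bound $m_0$ by a constant ($C_1(d)$ must be independent of $k$, $l$, $m_0$, $T$, and in the application to Theorem \ref{thm-c1ad} one takes $m_0=[k/6]\to\infty$, so a deficit linear in $m_0$ destroys the $l\ge\eps k$ conclusion and hence the $C^{1,\alpha}$ estimate). The pigeonhole idea also fails for a sign reason: the Lipschitz bound caps the angle from \emph{above}, so it bounds $\sum J_0^{(i)}$ from above, whereas you would need a lower bound on the $J_0^{(i)}$'s to offset the per-step loss; repeated degradation pushes the angle \emph{down}, where no physical cap exists.

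The paper's resolution is a different device: the proposition is not applied with a fixed $\delta$. A third bookkeeping index $r_3$ records how many consecutive ``lossy'' applications have occurred, and the $(r_3+1)$-st one is performed with $\delta=\delta_0/2^{r_3}$, so the accumulated angle loss is the convergent product $\prod_{i\ge1}(1+\delta_0 2^{-i})^{-1}\ge e^{-\delta_0}$ --- a bounded factor, absorbable into $C_1(d)$ --- rather than $(1+\delta_0/2)^{-m_0}$. This costs nothing on the time side, since $c(C_nd)\,\delta^{-2p}$ only \emph{increases} as $\delta$ decreases, so each step still advances time by at least the factor $M$ fixed in \eqref{eqn-c1c-11}. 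The paper also locates, at each stage $m$, the correct pair at which to apply the proposition by scanning a monotone family of pairs between a member of $A_{-t_m}$ and a non-member (controlled via the function $\eta$ and the quantities $s_m$), which is where the constraints $r_1-r_2+r_3=3m$, $r_3\le m$, $r_1+r_3\le s_m$ come from; your single-index $J_0$ plays a similar role but without the third index there is no way to make the losses telescope. As written, your induction proves a statement with error $C_1(d)+m_0\rho$ in the exponent of $(1+\delta_0)$, which is strictly weaker than the lemma and insufficient for its applications.
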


\begin{proof} Define $\eta: \N \to \Z$ as
$$(C_0^{-k}, (1+\delta_0)^{-\eta(k)-1}) \in A_{-t_0} \quad \mbox{but} \quad (C_0^{-k}, (1+\delta_0)^{-\eta(k)}) \notin A_{-t_0}.$$
Clearly,
\begin{enumerate}[i)]
\item $\eta$ is nondecreasing i.e $\eta (k+1) \geq \eta(k)$,
\item $\eta(0) \geq -C_1(d)$, and
\item $\eta(k) < l$ (by assumption).
\end{enumerate}
For each integer  $m$ with $0 \leq m \leq \frac{k-l-C_1(d)}3$, we
define $s_m$ as the largest $s$, $0 \leq s \leq k$ that satisfies
$$\eta(k-s) \leq l+3m-s.$$
Notice that we satisfy the inequality above when $s=0$ and the
opposite inequality when $s=k$. We obtain:
\begin{equation}\label{eqn-s_m}
\eta(k-s_m) = l+3m-s_m, \quad \mbox{thus} \quad s_m -3m \leq
l+C_1(d).
\end{equation}
 Also, from the definition of $s_m$ we find that $s_{m+1} \geq s_m +3$.

{\bf Claim: }{\it There exists $(r_1,r_2,r_3) \in \Z^3$, $r_i \geq
0$, such that
\begin{equation}\label{eqn-r123}
\left ( C_0^{r_1-k}, (1+\delta_0)^{r_2-l} \, \frac 1{ (1+ \frac
{\delta_0}2)  \cdots (1+ \frac {\delta_0}{2^{r_3}}) } \right ) \in
A_{-t_m}, \quad \quad \quad t_m=M^mt_0
\end{equation}
 with
\begin{equation}\label{eqn-c1a-4}
r_1-r_2+r_3 = 3m, \quad r_3 \leq m, \quad r_1+r_3 \leq s_m \, ( \,
\Leftrightarrow 0 \leq r_2 \leq s_m - 3m).
\end{equation}}
{ \it Proof of Claim:} In order to simplify the notation, instead
of (\ref{eqn-r123}) we write
$$(r_1,r_2,r_3) \in  \mathcal {A}_{-t_m} $$

We will use induction on $m$. For $m=0$ the claim holds from our
assumption  $(C_0^{-k}, (1+\delta_0)^{-l}) \in A_{-t_0}$, if
$(r_1,r_2,r_3)=(0,0,0)$.

Assume now that the claim holds for $m$. Consider the pairs
$$ ( r_1+s, r_2 , r_3-s), \qquad \mbox{if} \,\,\, 0 \leq s \leq r_3$$
$$(r_1+s, r_2+s-r_3,0), \qquad \mbox{if} \,\,\,  s \geq r_3$$
where $(r_1,r_2,r_3)$ comes from the induction step $m$. When
$s=0$ the first pair belongs to $\mathcal{A}_{-t_m}$,  by the
induction hypothesis, and when $s=s_m - r_1$ the second pair
doesn't belong to $\mathcal{A}_{-t_m}$,  since for that choice of
$s$ the second pair is
$$\left (C_0^{s_m-k}, (1+\delta_0)^{-(l+3m -s_m)} \right ) =\left (
C_0^{-(k-s_m)}, (1+\delta_0)^{-\eta(k-s_m)} \right )  \notin
A_{-t_0}$$ from the definition of the function $\eta$ given above.
Note that for $s=r_3$ the two pairs are the same.

It follows that either there exists an $s < r_3$ such that
$$( r_1+s, r_2, r_3-s)  \in \mathcal{A}_{-t_m} \quad \mbox{
and} \quad  (r_1+s+1,r_2, r_3-s-1) \notin \mathcal{A}_{-t_m}$$ or,
there exists an $ r_3 \leq s < s_m-r_1$ such that
$$(r_1+s,r_2+s-r_3,0) \in \mathcal {A}_{-t_m} \quad
\mbox{and} \quad (r_1+s+1,r_2+s+1-r_3,0 ) \notin \mathcal A_{-
t_m}.$$

In either case we can apply Proposition \ref{prop-angle-main}.
Indeed, the hypothesis (\ref{eqn-c1a-6}) and Lemma \ref{lem-c1a-1}
imply the existence of a section $S_h$ of $u(\cdot, t)$ that
satisfies ii) in Proposition \ref{prop-angle-main} for any $h \le
1$ and any $t \in [-T,0]$. More precisely, $S_h$ is
$C_nd$-balanced section around $0$ and it is compactly supported
in $\om$. We conclude that either $(r_1+s+2,r_2,r_3-s+1)$ for some
$0 \leq s < r_3$ or $(r_1+s+2,r_2+s-r_3, 1)$ for some $s \geq r_3$
belongs to $\mathcal A_{-M t_m}$. Notice that in both cases the
sum of the first and third component is less than $s_m +3 \le
s_{m+1}.$ This concludes the proof of the claim.

\

The lemma follows now from the claim above. Since $M^{m_0}\, t_0
\geq T$ and $$r_1\le s_{m_0} \le l + 3 m_0 + C_1(d), \quad \quad
r_2 \ge 0,$$ we conclude that
$$\left ( C_0^{C_1(d)+l+3m_0 -k}, (1+\delta_0)^{-l} \,e^{-\delta_0}   \right ) \in A_{-T}.$$

\end{proof}

\begin{rem}\label{rem-c1ad}
If we assume that hypothesis \eqref{eqn-c1a-6} holds only on a
smaller interval $t\in[-T_1,0]$ instead of the full interval
$[-T,0]$ then the same conclusion holds by replacing $C_1(d)$ with
a constant $C_1(d, T/T_1)$.

 The only difference occurs in the inductive step that
shows
 $(r_1,r_2,r_3) \in \mathcal{A}_{-t_m},$
and we have to distinguish whether $t_m \le T_1$ or $t_m > T_1$.
The case when $t_m \le T_1$ is the same and we obtain
$t_{m+1}=Mt_m$ as before. In the case when $t_m>T_1$ we apply
Remark \ref{rem-p_angle} of Proposition \ref{prop-angle-main} and
obtain $t_{m+1}=t_m +MT_1$. This second case occurs at most
$T/(MT_1)=C(d,T/T_1)$ times and therefore we need to replace $m_0$
by $m_0+C(d, T/T_1)$.

\end{rem}

\begin{rem}\label{c1a-r4}
If in the assumption \eqref{eqn-c1a-6} we have a constant $a$
instead of $1$ i.e
$$B_{\frac 1d} \subset \{ \, x: \,\, u(x,0) < a \, \} \subset \{ \, x: \,\, u(x,-T) < a \, \} \subset B_1
$$
then the conclusion is the same, except that $k \geq 0$ is
replaced by $k \geq C(a)$ and $C_1(d)$ is replaced by $C_1(d,a)$.

Indeed, $\tilde u (x,t) := \frac 1a \, u(x,a^{1-np}\, t)$
satisfies the assumptions  of the lemma with $\tilde t_0 =
a^{np-1} \, t_0$ and $\tilde T = a^{np-1}\, T$ and
$(C_0^{-k+C(a)}, (1+\delta_0)^{-l-C(a)} ) \in A_{-\tilde
t_0}(\tilde u)$, hence  the conclusion of the lemma follows.
\end{rem}

Next we prove Theorem \ref{thm-c1ad}.

\begin{proof}[Proof of Theorem \ref{thm-c1ad}] From the
continuity of $u$ we can assume that, after a linear
transformation, we have the following situation: $u(0,0)=0$,
$u(x,-T_1)>1$ on $\partial \om$ and
$$ B_{\frac {1}{2d}} \subset \{ \,  u(x,0) < 1 \, \} \subset
\{ \, u(x,-T_1) < 1 \, \} \subset B_1  $$ for some small $T_1\in
(0,T]$.

Let $k \ge 0$, $l$ be integers such that $$(C_0^{-k},
(1+\delta_0)^{-l})\in A_0.$$

In view of Corollary \ref{cor-c1a} it suffices to show that there
exists $\eps:=\eps(d, \beta)$ small (or $\eps=\eps(d)$ for the
second part) such that $l \ge \eps k$ for all large $k$. Assume by
contradiction that
$$l < \eps k \quad \mbox{for a sequence of $k \to \infty$.}$$
Then, from the Lipschitz continuity of $u(x,0)$ in $B_{1/4d}$ and
Proposition \ref{prop-p2} we find (as in the proof of Theorem
\ref{thm-angle}) that
$\left(2C_0^{-k},(1+\delta_0)^{-l}-C(d)C_0^{-k}\right) \in
A_{-t_0}$ or, for $k$ large enough
\begin{equation}\label{eqn-lip1}
(C_0^{1-k}, (1+\delta_0)^{-l-1}) \in A_{-t_0} \quad  \mbox{with}
\quad t_0:=c(d)C_0^{-k(np+1)}.
\end{equation}

 Now we can apply Remark \ref{rem-c1ad} and conclude that if
 $$3m_0 \le k-l-C_1 \quad \mbox{and} \quad M^{m_0}t_0\ge T$$ then
$$(C_0^{C_1+3m_0+l-k}, (1+\delta_0)^{-l-C_1} ) \in A_{-T} \quad \mbox{with $C_1=C_1(d,T/T_1)$}. $$
We choose $m_0=[\frac k6]$ to be the smallest integer greater than
$k/6$. Clearly both inequalities for $m_0$ are satisfied for $k$
large (we assume $\eps \le 1/6$) since $M=C_0^{12(np+1)}$ and
$$M^{m_0}t_0\ge C_0^{2k(np+1)} t_0 \to \infty \quad \mbox{as $k \to \infty$.}$$
Thus $$(C_0^{-k/6}, (1+\delta_0)^{-2\eps k}) \in A_{-T} \quad
\mbox{for a sequence of $k \to \infty$.}$$

We reached a contradiction if $u(0,-T)<0$ (we choose $\eps=1/6$).

If we assume that $u(0,-T)=0$ and $u(x,-T)$ is $C^{1, \beta}$ at
$0$ in the $e_n$ direction then it follows from Corollary
\ref{cor-c1a},
$$ \frac{\log{C_0}}{6} \frac{\beta}{\beta +1} \le  2 \eps \log(1+\delta_0)$$
and we reach a contradiction again by choosing $\eps(d,\beta)$
small.
\end{proof}

\section{$C^{1,\alpha}$ regularity - II}\label{sec-c1a}

In this section we prove the main estimates. Let $u$ be a solution
defined in $\om \times [-T,0]$ and assume that $u>l(x)$ on
$\partial \om \times [-T,0]$ for some linear function $l(x)$. We
are interested in obtaining $C^{1,\alpha}$ estimates in $x$ at
time $t=0$ in any compact set $K$ included in the section
$\{u(x,0)< l(x)\}$. Theorem \ref{thm-c1ad} gives such estimates
but with the exponent $\al$ depending also on the distance from
$K$ to $\partial \{u(x,0)< l(x)\}$ which is not desirable.

We can assume that after rescaling we are in the following
situation:
\begin{equation}\label{eqn-pma81}
\la \, \ddua \leq u_t \leq \La \, \ddua, \qquad \mbox{in} \,\,
\Omega \times [-T,0],
\end{equation}
\begin{equation}\label{eqn-pma82}
\mbox{$u > 1$ on $\partial \om \times [-T,0]$, \quad \quad $\om
\subset B_1(y)$ for some $y\in \R^n$, }
\end{equation}
\begin{equation}\label{eqn-pma83}
\mbox{ $u_0(x):=u(x,0)$ satisfies $u_0(0)=0$.}
\end{equation}

 First two theorems
deal with the case $p<\frac{1}{n-2}$ and $p=\frac{1}{n-2}$. In
view of the results of Section 3, $C^{1, \alpha}$ (or $C^1$)
continuity is expected for these exponents regardless of the
behavior of the initial data at time $-T$.

\begin{thm}\label{thm-c1a2} Let $u$ be a solution of \eqref{eqn-pma81}-\eqref{eqn-pma83} with $0 < p <
\frac 1{n-2}$ and $T \le 1$. Then,
$$\|u_0\|_{C^{1,\alpha}(K)} \leq C(K) \, T^{-\gamma}
\quad \mbox{for any set}\quad K \subset \subset \{\,  u_0(x)  < 1
\}.$$ The constants $\alpha, \gamma >0$ are universal (depend only
on $n, p, \la$ and $\La$), and $C(K)$ depends on the universal
constants and the distance between $K$ and $\partial \{ u_0(x) < 1
\, \}$.
\end{thm}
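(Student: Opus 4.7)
The plan is to quantify the scheme of Theorem~\ref{thm-c1ad} by exploiting that, for $p<\frac{1}{n-2}$, sharp angles close up at a polynomial rate rather than merely an asymptotic one. After a standard preliminary reduction -- for each $x_0\in K$, John's lemma applied to the section $\{u_0<1\}$ together with Lemma~\ref{lem-c1a-1} produces a sub-level section of $u_0$ at $x_0$ which is $d$-balanced with $d$ depending on $n$ and $\dist(K,\partial\{u_0<1\})$ and is compactly contained in $\om$ -- an affine change of variables together with a subtraction reduces the estimate to the following pointwise statement at $0$: in each direction $e_n$, if $(C_0^{-k},(1+\delta_0)^{-l})\in A_0(u)$, then $l\ge c_0 k-c_1\log T^{-1}-O(1)$ for some universal $c_0,c_1>0$. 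By Corollary~\ref{cor-c1a} this is precisely the desired $C^{1,\alpha}$ bound with norm at most $CT^{-\gamma}$ and universal exponents $\alpha,\gamma>0$.

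Arguing by contradiction, I suppose that $l<c_0 k-c_1\log T^{-1}-O(1)$ holds for some pair $(C_0^{-k},(1+\delta_0)^{-l})\in A_0(u)$. The Lipschitz bound on $u_0$ near $0$ (Remark~\ref{rem-fs1}) together with Proposition~\ref{prop-p2} first upgrade this to $(C_0^{1-k},(1+\delta_0)^{-l-1})\in A_{-t_0}$ for $t_0\asymp C_0^{-k(np+1)}$, exactly as in the proof of Theorem~\ref{thm-c1ad}. Choosing $m_0$ the smallest integer with $M^{m_0}t_0\ge T$, so that $m_0\approx k/12+\log T^{-1}/(12(np+1)\log C_0)$, the assumed upper bound on $l$ is designed to ensure the companion hypothesis $3m_0\le k-l-C_1$ of Lemma~\ref{c1a-l3}, which therefore delivers
\[ (h',\alpha')\in A_{-T}(u),\qquad h':=C_0^{C_1+l+3m_0-k},\quad \alpha':=(1+\delta_0)^{-l-C_1}. \]

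The membership $(h',\alpha')\in A_{-T}$ says, after subtracting a suitable linear function $q_0\cdot x$ (the midpoint of the two slopes), that $\bar u(x,t):=u(x,t)+h'-q_0\cdot x$ is a viscosity solution of the same equation satisfying $\bar u(\cdot,-T)\ge(\alpha'/2)|x_n|$, $\bar u\ge 0$, $\bar u>\mathrm{const}$ on $\partial\om$, and $\bar u(0,0)=h'$. Applying Proposition~\ref{prop-p8} to $\bar u$ on each half-space $\{\pm x_n>0\}$ (after a translation $x_n\mapsto x_n\mp\beta$ so that each side fits into the one-sided configuration required by that proposition) with $\beta\asymp h/\alpha'$ and $|S'|\le|\pi_n(\om)|$ bounded, and then inverting $t_0\asymp (\beta|S'|)^{2p}/h^{np-1}\asymp(\alpha')^{-2p}h^{(2-n)p+1}$ at $t_0=T$, one obtains the polynomial rate
\[ h'\ =\ \bar u(0,0)\ \ge\ c\,(\alpha')^{2p\delta}\,T^{\delta},\qquad \delta:=\frac{1}{(2-n)p+1}>0, \]
where the hypothesis $p<\frac{1}{n-2}$ enters precisely by making $\delta$ finite and positive. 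Taking logarithms, substituting the formulas for $h',\alpha'$ and the minimal choice of $m_0$, and using the elementary inequality $\delta>\frac{1}{4(np+1)}$ (valid for all $p>0$ and $n\ge 2$ by direct computation), the resulting inequality rearranges to the desired $l\ge c_0k-c_1\log T^{-1}-O(1)$, contradicting the assumption. The main obstacle is the quantitative angle-closure step above: one must carefully adapt Proposition~\ref{prop-p8} to the two-sided angle $(\alpha'/2)|x_n|$ and extract the precise exponents in $\alpha'$ and $T$ via the scaling of the equation; the fact that $\delta\to\infty$ as $p\uparrow\frac{1}{n-2}$ is what breaks the argument at the critical exponent and motivates the weaker logarithmic-modulus conclusion of Theorem~\ref{thm-c1a3}.
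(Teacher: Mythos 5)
Your quantitative mechanism is the right one and matches the paper's: run the angle-iteration down to time $-T$, then use that for $p<\tfrac{1}{n-2}$ an angle of opening $\al'$ forces a separation $h'\gtrsim (\al')^{2p\delta}T^{\delta}$ with $\delta=\frac{1}{1-(n-2)p}$ (the paper gets this from \eqref{eqn-angle2} with $|S'_{\tilde h}|\le C$ rather than from a two-sided adaptation of Proposition \ref{prop-p8}, but these are equivalent), and feed this back into the contradiction hypothesis. However, there is a genuine gap in the first half: you run the iteration through Lemma \ref{c1a-l3}, whose parameter $\delta_0=\delta_0(d)$ is fixed by the relation $c(C_nd)\,\delta_0^{-2p}=M$ and therefore degenerates ($\delta_0\to 0$) as the balancing constant $d$ of the normalized section at $x_0\in K$ blows up, i.e.\ as $x_0$ approaches $\partial\{u_0<1\}$. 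This is fatal for the claim that $\al$ is universal. Concretely: the hypothesis $3m_0\le k-l-C_1$ of Lemma \ref{c1a-l3}, with $m_0\gtrsim k/12$ forced by $M^{m_0}t_0\ge T$, caps the slope $c_0$ in your contradiction hypothesis $l<c_0k-\cdots$ by an absolute constant (at most $3/4$); translating $l\ge c_0k-\cdots$ back into a H\"older exponent via Corollary \ref{cor-c1a} gives $\frac{\al}{1+\al}\sim c_0\,\frac{\log(1+\delta_0)}{\log C_0}$, which tends to $0$ as $\delta_0(d)\to 0$. So your argument, as written, proves the estimate only with $\al=\al(K)$ --- essentially what Theorem \ref{thm-c1ad} already gives --- and your assertion that $c_0,c_1$ are universal is unjustified. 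The paper flags exactly this obstruction at the start of Section \ref{sec-c1a}.

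The missing ingredient is Lemma \ref{c1a-p5}, the refinement of Lemma \ref{c1a-l3} in which $\delta_0$ and $M$ are taken universal (corresponding to $d=C_n$) at the price of (i) moving the conclusion to a nearby point $\tilde x$ and a slightly earlier time $\tilde t=T-T/C(j)$, and (ii) constants $C(j)$ depending on the volume ratio $j$ of the maximal ellipsoid in $\{u_0<1\}$ centered at $x_0$. The mechanism is part (b) of Lemma \ref{lem-c1a-1}: whenever a section fails to be $C_n^2$-balanced around the current base point, one re-centers at its center of mass, and this can happen at most $j$ times because the maximal inscribed ellipsoid doubles in volume at each re-centering. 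With this lemma in hand, the $j$-dependence lands only in the multiplicative constant $C(K)\,T^{-\gamma}$, while $\al$ and $\gamma$ come out universal, and your final computation (logarithms of $\tilde h^{1-(n-2)p}\tilde\al^{-2p}\ge T/C(j)$, rearranged against $l<\e_0k$) goes through as you describe, applied to the translated function $\tilde u$ at $\tilde x$ rather than at the origin.
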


 The example in Proposition \ref{prop-p12} shows that the Theorem
\ref{thm-c1a2} fails when $ p > \frac 1{n-2}$. For the critical
exponent $p=\frac 1{n-2}$ we obtain a logarithmic modulus of
continuity of the gradient.

\begin{thm}\label{thm-c1a3} Under the same assumptions and notation as in Theorem \ref{thm-c1a2}, if $p=\frac 1{n-2}$, then
$$|\nabla u_0(x) - \nabla u_0( y) | \leq C(K)\,  \, |\log |x-y||^{-\al} \, T^{-\gamma}, \quad \quad \forall x,y\in K.$$
\end{thm}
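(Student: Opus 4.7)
The plan is to adapt the argument from Theorem~\ref{thm-c1a2} by replacing the polynomial quantitative separation estimate that drives the proof there (available for $p<\tfrac{1}{n-2}$) with the logarithmic one that appears at the critical exponent. Concretely, the proof of Proposition~\ref{lem-p11} shows that at $p=\tfrac{1}{n-2}$, a solution which at time $-T$ lies above a unit angle in a unit cylinder satisfies $u(0,0)\ge c\,e^{-CT^{-(n-2)/2}}$, and this is the input that will propagate through the iteration to produce the logarithmic modulus.

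\textbf{Setup and iteration.} After translating $x_0\in K$ to the origin, subtracting a linear function, and using Lemma~\ref{lem-c1a-1} together with $K\subset\subset\{u_0<1\}$, I may assume $u_0(0)=0$, that $0$ is a subgradient of $u_0$ at $0$, and that every small section $S_h(0,0)$ is $d$-balanced around the origin and compactly included in $\om$, with a constant $d$ depending only on $n$ and $\dist(K,\partial\{u_0<1\})$. Suppose $(C_0^{-k},(1+\delta_0)^{-l})\in A_0$ for large $k$ and some $l\ge 0$. Exactly as in the proof of Theorem~\ref{thm-c1ad}, the Lipschitz continuity of $u_0$ together with Proposition~\ref{prop-p2} give
\[(C_0^{1-k},(1+\delta_0)^{-l-1})\in A_{-t_0}, \qquad t_0:=c(d)\,C_0^{-k(np+1)},\]
and Lemma~\ref{c1a-l3}, applied via Remark~\ref{rem-c1ad} with the choice $m_0=\lceil k/12\rceil+C(d,T/T_1)$ (ensuring $M^{m_0}t_0\ge T$), then yields, for $l$ in the admissible range $l\le\tfrac{3k}{4}-O(1)$,
\[(h',\al'):=\bigl(C_0^{-k+3m_0+l+C_1},\,(1+\delta_0)^{-l-C_1}\bigr)\in A_{-T}.\]

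\textbf{Quantitative separation at the critical exponent.} The new ingredient is to exploit the angle $(h',\al')\in A_{-T}$. Rescaling the value of $u$ by $2/\al'$ and time by the factor $(\al'/2)^{2/(n-2)}$ (the natural scaling at $p=\tfrac{1}{n-2}$), the angle of slope $\al'$ becomes a unit angle for the rescaled function $\tilde v$, the equation for $\tilde v$ is a supersolution with the same constants $\la,\La$, and the elapsed rescaled time is $T_{\mathrm{resc}}=T(\al'/2)^{2/(n-2)}$. The $d$-balanced property of the sections of $u(\cdot,-T)$ provides the cylindrical inclusion of the sub-level set required to invoke the quantitative statement buried in the proof of Proposition~\ref{lem-p11}, yielding $\tilde v(0,0)\ge c\exp(-CT_{\mathrm{resc}}^{-(n-2)/2})$. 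Since $T_{\mathrm{resc}}^{-(n-2)/2}=2T^{-(n-2)/2}/\al'$ and $\tilde v(0,0)=2h'/\al'$, unwinding the scaling produces the implicit inequality
\[\al'\le C\,h'\,\exp\!\bigl(CT^{-(n-2)/2}/\al'\bigr),\]
whose asymptotic inversion (a Lambert-$W$ type computation) gives $\al'\le CT^{-(n-2)/2}/|\log h'|$ once $h'$ is sufficiently small.

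\textbf{Extracting the modulus.} Plugging $h'=C_0^{-k+3m_0+l+C_1}$ and $\al'=(1+\delta_0)^{-l-C_1}$ into this bound and using $|\log h'|\ge ck\log C_0$ (valid in the admissible range for $l$), we obtain $(1+\delta_0)^{l}\ge c\,k\,T^{(n-2)/2}$, hence $l\ge(\log k)/\log(1+\delta_0)-C(T)$. Consequently
\[\al_0=(1+\delta_0)^{-l}\le\frac{C\,T^{-(n-2)/2}}{k}=\frac{C\,T^{-(n-2)/2}}{|\log h|},\]
which through the logarithmic analogue of Corollary~\ref{cor-c1a} supplies a logarithmic modulus of continuity for $\nabla u_0$ in every direction $e$, uniformly in $x_0\in K$, with a universal $\al>0$ and $\gamma=(n-2)/2$. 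I expect the main obstacle to be the rescaling step: one must carefully verify that after the value- and time-rescalings the angle condition really becomes a unit angle lying in a cylinder thin enough to invoke the quantitative form of Proposition~\ref{lem-p11}, a point where the $d$-balanced geometry of the sections (and ultimately $\dist(K,\partial\{u_0<1\})$) enters essentially.
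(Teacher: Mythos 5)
Your proposal is correct in its essential mechanism and it reproduces the key new idea of the paper's proof: in the iteration that produces an angle $(h',\al')\in A_{-T}$, replace the polynomial separation estimate \eqref{eqn-angle2} by the exponential one from Proposition \ref{lem-p11} (namely $u(\cdot,t)\ge h\,e^{-Ct^{-(n-2)/2}}$ for a solution above a unit angle), rescale by $2/\al'$ in value and $(\al'/2)^{2/(n-2)}$ in time, and invert $h'\ge c\,\al'e^{-CT^{-(n-2)/2}/\al'}$ to get $\al'\le CT^{-(n-2)/2}/|\log h'|$ — this is exactly the paper's chain $\tilde h\ge c(j)e^{-C\tilde\al^{-1}\tilde t_0^{-(n-2)/2}}$ leading to $l\ge 2\e_0\log k+C\log T-C(j)$. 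Where you genuinely diverge is in the choice of iteration lemma: the paper runs the argument through Lemma \ref{c1a-p5}, whose whole point is to keep $\delta_0$ universal (taking $d=C_n$ and recentering at a point $\tilde x$ via Lemma \ref{lem-c1a-1}(b) whenever the sections cease to be balanced around the origin), whereas you use Lemma \ref{c1a-l3} with Remark \ref{rem-c1ad}, accepting $\delta_0=\delta_0(d(K))$. This is a legitimate simplification \emph{specifically at the critical exponent}: the inequality $(1+\delta_0)^{l}\ge c\,k\,T^{(n-2)/2}$ gives $(1+\delta_0)^{-l}\le Ck^{-1}T^{-(n-2)/2}$ with $\delta_0$ appearing only in the multiplicative constant, so the exponents $\al,\gamma$ stay universal and the $K$-dependence is absorbed into $C(K)$ as the statement allows. (Note this shortcut would \emph{not} prove Theorem \ref{thm-c1a2}, where the H\"older exponent would inherit the $\delta_0(K)$-dependence — which is precisely why Section 8 exists.) One point to tighten: you apply the rescaled Proposition \ref{lem-p11} to the angle at time $-T$, but the balanced, compactly contained sections needed for that estimate are only guaranteed on the interval $[-T_1,0]$ where the normalization \eqref{eqn-c1a-6} holds; since $(h',\al')\in A_{-T}$ implies $(h',\al')\in A_{-T_1}$ (angles persist forward in time) and $T_1=\min(T,c(K))$, you should run the separation estimate over $[-T_1,0]$ instead, which still yields $\al'\le C(K)T^{-(n-2)/2}/|\log h'|$. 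The paper avoids this by applying the estimate to the translated solution $\tilde u$ over $[-\tilde t_0,0]$ with $\tilde t_0=T/C(j)$, where the section hypothesis is built into the conclusion of Lemma \ref{c1a-p5}.
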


Next two theorems deal with the case of general exponents $p>0$.
First theorem states that if the initial data $u(x,-T)$ is $C^{1,
\beta}$ in the $e$ direction then $u(x,0)$ is $C^{1,\alpha}$ in
the $e$ direction with $\al=\al(\beta)$.

\begin{thm}\label{thm-c1a1} Let $u$ be a solution of \eqref{eqn-pma81}-\eqref{eqn-pma83} with $p>0$.
If  $$ \partial _e u(\cdot,-T) \in C^ \beta(\bar{S}), \quad \quad
S:= \{\, u(x,-T) < 1 \},$$ for some $\beta>0$ small, then for any
set $K \subset \subset \{\,  u_0(x)  < 1 \}$
$$\|\partial _e u_0\|_{C^{\alpha}(K)} \leq C(K)\|\partial_e u(\cdot,-T)\|_{C^\beta(\bar S)}. $$ The constant $\alpha=\al(\beta) >0$ depends on $\beta$ and the universal
constants.
\end{thm}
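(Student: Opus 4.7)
The strategy is to apply Theorem \ref{thm-c1ad}(a) pointwise at each $x_0 \in K$ in the direction $e$, and then patch the resulting pointwise $C^{1,\alpha}$ expansions into a uniform $C^\alpha$ bound for $\partial_e u_0$ on $K$. The hypothesis $\partial_e u(\cdot,-T) \in C^\beta(\bar S)$ says exactly that $u(\cdot,-T)$ is $C^{1,\beta}$ in the direction $e$ at every point of $S$, with pointwise constant controlled by $\|\partial_e u(\cdot,-T)\|_{C^\beta(\bar S)}$. Since $u$ is increasing in $t$ we have $\{u_0 < 1\} \subset S$, so this pointwise hypothesis is available at every $x_0 \in K$.

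To verify the balanced section assumption of Theorem \ref{thm-c1ad}(a) uniformly on $K$, I would fix $x_0 \in K$ and let $\eta := \dist(K,\partial\{u_0 < 1\}) > 0$. Caffarelli's centered sections theorem applied to $u(\cdot,0)$ at $x_0$ produces, for $h < \eta$, a section $T_h(x_0, 0)$ that has $x_0$ as its center of mass and is contained in $\{u_0 < 1\} \subset\subset \Omega$. By John's lemma this section is $C_n$-balanced around $x_0$, so the balancedness constant $d = C_n$ is universal and in particular uniform on $K$. Theorem \ref{thm-c1ad}(a) then gives $C^{1,\alpha}$ regularity of $u_0$ at $x_0$ in the direction $e$ with $\alpha = \alpha(\beta, C_n)$, i.e. depending only on $\beta$ and universal constants.

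Next I would track the multiplicative constant. The pointwise output takes the form
$|u_0(x_0 + se) - u_0(x_0) - s\,\partial_e u_0(x_0)| \le M(K)\,\|\partial_e u(\cdot,-T)\|_{C^\beta(\bar S)}\,|s|^{1+\alpha}$
for small $|s|$, with $M(K)$ depending only on $K$, $T$ and universal quantities. Comparing the pointwise expansions at two nearby points $x_0, y_0 \in K$ along the segment in direction $e$ through them, and optimizing on the scale $|s| \sim |x_0 - y_0|$, converts these pointwise bounds into the desired estimate
$|\partial_e u_0(x_0) - \partial_e u_0(y_0)| \le C(K)\,\|\partial_e u(\cdot,-T)\|_{C^\beta(\bar S)}\,|x_0 - y_0|^\alpha$.

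The main obstacle is the bookkeeping in the proof of Theorem \ref{thm-c1ad}(a) needed to ensure that the H\"older exponent $\alpha$ depends only on $\beta$ (and universal constants), while the constants $C_1(d, T/T_1)$ from Remark \ref{rem-c1ad} and the threshold $\delta_0$ from \eqref{eqn-c1c-11} get absorbed into $M(K)$ without polluting $\alpha$. A minor subtlety is the conversion of the $C^\beta$ information on $\partial_e u(\cdot,-T)$ into the pointwise $C^{1,\beta}$-in-direction-$e$ statement required by Theorem \ref{thm-c1ad}(a); this follows by integrating $\partial_e u(\cdot,-T)$ along the line through $x_0$ in direction $e$ and using the H\"older bound.
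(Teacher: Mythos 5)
Your preliminary reductions are fine (the $C^\beta$ bound on $\partial_e u(\cdot,-T)$ does give pointwise $C^{1,\beta}$ of $u(\cdot,-T)$ in the direction $e$ by integrating along the line, and patching pointwise $C^{1,\alpha}$ expansions of a convex function into a $C^\alpha$ bound for $\partial_e u_0$ on $K$ is routine), but the core of the argument has a gap: the claim that centered sections make the constant $d$ of Theorem \ref{thm-c1ad} universal and hence force $\alpha(\beta,d)=\alpha(\beta)$. The constant that actually drives the exponent is not the balancedness of a single section of $u(\cdot,0)$. In the proof of Theorem \ref{thm-c1ad} one must reach the normalization \eqref{eqn-c1a-6}, $B_{1/d}\subset\{u(x,0)<1\}\subset\{u(x,-T)<1\}\subset B_1$, which compares the time-$0$ section with the time-$(-T)$ section \emph{in the same affine coordinates}; the H\"older exponent then degenerates through $\delta_0(d)$ in \eqref{eqn-c1c-11}, since it comes out proportional to $\log(1+\delta_0)/\log C_0$. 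An affine map that rounds the centered section $T_{h_0}(x_0,0)$ does nothing to round $\{u(\cdot,-T)<u_0(x_0)+h_0+q\cdot(x-x_0)\}$: backward in time the sections grow and can become arbitrarily eccentric relative to the time-$0$ section (for instance when $x_0$ approaches $\partial\{u_0<1\}$, or when the data at time $-T$ degenerates along some direction), so no universal $d$ works. This is precisely the obstruction announced at the start of Section \ref{sec-c1a} (``Theorem \ref{thm-c1ad} gives such estimates but with the exponent $\alpha$ depending also on the distance from $K$ to $\partial\{u(x,0)<l(x)\}$''), and it is what Lemma \ref{c1a-p5} is built to overcome: $\delta_0$ is fixed universally for $d=C_n$, and whenever the section at an intermediate time $-t_m$ fails to be $C_n^2$-balanced one re-centers at its center of mass, with Lemma \ref{lem-c1a-1}(b) guaranteeing that this happens at most $j$ times, so that $j$ enters only the constants $C(j)$ and never $\delta_0$.

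A telling symptom of the gap is that your argument would use the $C^{1,\beta}$ regularity of $u(\cdot,-T)$ only at the single base point $x_0$, i.e.\ it would prove a strictly stronger, purely pointwise theorem. The paper's proof cannot do this: the re-centering produces an a priori unknown point $\tilde x\in\bar S$ (Lemma \ref{c1a-p5} and Remark \ref{c1a-r6}), and the final contradiction in the proof of Theorem \ref{thm-c1a1} is run against the $C^\beta$ modulus of $\partial_{e_n}u(\cdot,-T)$ at $\tilde x$, not at $x_0$ --- which is exactly why the hypothesis is a norm over all of $\bar S$. To repair your argument you would need either to show that \eqref{eqn-c1a-6} holds with a universal $d$ simultaneously for all $t\in[-T,0]$ (false in general), or to incorporate the re-centering mechanism of Lemma \ref{c1a-p5}, at which point you have reproduced the paper's proof.
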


The second Theorem is a pointwise $C^{1,\alpha}$ estimate at
points that separated from the initial data at time $-T$.

\begin{thm}\label{thm-c1a4}
Let $u$ be a solution of \eqref{eqn-pma81}-\eqref{eqn-pma83} with
$p>0$. If
$$u(0,0) - u(0,-T) := a >0$$ then, there exists $q \in \R^n$ for
which
$$|u_0(x)  - q \cdot x | \leq C(a) \, |x|^{1+\al}$$
with $\al$ universal  and $C(a)$ depends on $a$, the distance from
$0$ to $\partial \, \{ u_0 < 1 \} )$ and the universal constants.

\end{thm}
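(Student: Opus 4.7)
The plan is to apply the iterative machinery from the proof of Theorem \ref{thm-c1ad}(b), but starting from the \emph{centered} section of $u_0$ at the origin rather than from an arbitrary $d$-balanced section. The crucial observation is that a centered section is automatically $C_n$-balanced around its center of mass (John's lemma), so in Lemma \ref{c1a-l3} the parameter $\delta_0$ determined by $c(C_nd)\,\delta_0^{-2p}=C_0^{12(np+1)}$ becomes a universal constant, and consequently so does the H\"older exponent produced by the iteration. This is the only place where $d$ entered non-trivially in the proof of Theorem \ref{thm-c1ad}(b).

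Concretely, I would first apply Caffarelli's centered section theorem at $(0,0)$: for every small $h>0$ there is a section $T_h=\{u_0(x)<p_h\cdot x+h\}$ with $0$ as its center of mass, automatically $C_n$-balanced around $0$. Since $\rho:=\mathrm{dist}(0,\partial\{u_0<1\})>0$ and $T_h$ shrinks to $\{0\}$ as $h\to 0$, we choose $h_0=h_0(\rho)$ small so that $T_{h_0}\subset\subset\{u_0<1\}\subset\subset\om$. After an affine rescaling and subtracting $p_{h_0}\cdot x$ we may assume $B_{1/(2C_n)}\subset T_{h_0}\subset B_1$, $u_0(0)=0$, and $u(0,-T)=-\tilde a<0$ where $\tilde a=\tilde a(a,\rho)>0$ comes from rescaling $a$. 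We are now in the setup of Theorem \ref{thm-c1ad}(b) with $d=C_n$ universal, and the contradiction argument goes through verbatim: if $(C_0^{-k},(1+\delta_0)^{-l})\in A_0$ and $l<\epsilon k$ along a subsequence, the Lipschitz bound on $u_0$ over $B_{1/(4C_n)}$ together with Proposition \ref{prop-p2} gives $(C_0^{1-k},(1+\delta_0)^{-l-1})\in A_{-t_0}$ with $t_0\sim C_0^{-k(np+1)}$, and Remark \ref{rem-c1ad} with $d=C_n$ propagates this forward to
$$(C_0^{-k/6},(1+\delta_0)^{-2\epsilon k})\in A_{-T},$$
upon choosing $m_0=[k/6]$ and $\epsilon=1/6$. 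In particular $u(0,-T)\ge -C_0^{-k/6}$, contradicting $u(0,-T)\le -\tilde a$ as soon as $k\ge k_0(\tilde a)$ is large enough that $C_0^{-k/6}<\tilde a$. Thus $l\ge k/6$ for all $k\ge k_0$, and Corollary \ref{cor-c1a} translates this into the desired pointwise $C^{1,\alpha}$ estimate, with the universal exponent $\alpha$ given by $\alpha/(1+\alpha)=\tfrac{1}{6}\log(1+\delta_0)/\log C_0$.

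The main point to verify will be that the auxiliary constant $C_1(C_n,T/T_1)$ from Remark \ref{rem-c1ad} and the condition $M^{m_0}t_0\ge T$ only inflate the integer threshold $k_0$ (above which $l\ge k/6$ is established) rather than affecting the H\"older exponent itself. Once this is confirmed, all of the dependence on $a$, $\rho$ and $T/T_1$ is absorbed into the constant $C(a)$ in the final estimate, leaving $\alpha$ a universal constant as required.
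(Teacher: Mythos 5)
Your reduction via Caffarelli's centered sections only normalizes the section of $u_0$ at time $0$, but the iteration you want to run (Lemma \ref{c1a-l3}, through Proposition \ref{prop-angle-main}) needs, at \emph{every} time $-t_m=-M^m t_0$ up to $-T$, a section of $u(\cdot,-t_m)$ that is $d$-balanced around the origin with the \emph{same} universal $d$; this is exactly what hypothesis \eqref{eqn-c1a-6} encodes by sandwiching both $\{u(\cdot,0)<1\}$ \emph{and} $\{u(\cdot,-T)<1\}$ between $B_{1/d}$ and $B_1$. Since $u$ is increasing in $t$, the sets $\{u(\cdot,-t)<1\}$ grow as $t$ increases and are constrained only by $\om$; in the coordinates that make $T_{h_0}$ round, the set $\{u(\cdot,-T)<h_0+p_{h_0}\cdot x\}$ can be arbitrarily large and eccentric, so you are \emph{not} ``in the setup of Theorem \ref{thm-c1ad}(b) with $d=C_n$''. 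The point you defer to the end (``the main point to verify'') is therefore not bookkeeping but the entire difficulty of Section 8. You can restrict to $[-T_1,0]$ by continuity and invoke Remark \ref{rem-c1ad}, and this does keep $\delta_0$, hence $\al$, universal; but then the threshold $k_0$, and with it the constant in the final estimate, depends on $T/T_1$ and on the affine map normalizing $T_{h_0}$, i.e.\ on the modulus of continuity of $u$ in time and on the eccentricity of the centered section --- quantities not controlled by $a$ and $\dist(0,\partial\{u_0<1\})$ alone. So at best you prove a weaker statement than the theorem, whose constant is only allowed to depend on $a$, that distance, and universal constants.

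The paper's proof avoids this by keeping the original normalization $\om\subset B_1(y)$ and replacing Lemma \ref{c1a-l3} by Lemma \ref{c1a-p5}: whenever the sections of $u(\cdot,-t_m)$ stop being $C_n^2$-balanced around the current base point, the argument re-centers at the center of mass $x^*$ of a balanced section, and part (b) of Lemma \ref{lem-c1a-1} (volume doubling of the maximal centered ellipsoid) guarantees this happens at most $j$ times, with $j$ controlled by $\dist(0,\partial\{u_0<1\})$. The price is that the final angle lives at a displaced point $\tilde x$, so one cannot contradict $u(0,-T)\le -a$ directly as you do; instead Remark \ref{c1a-r7} tracks the separation $u(\cdot,0)-u(\cdot,-T)\ge a$ through the successive re-centerings and yields $a\le C_0^{C(j)+l+3m_0-k}$, from which $l\ge \e_0 k-C(j,a)$ follows with constants of the stated form. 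To keep your route you would have to prove a lower bound on $T_1$ and an eccentricity bound on $T_{h_0}$ in terms of $a$ and $\dist(0,\partial\{u_0<1\})$ only, which is not obvious; otherwise you should adopt the re-centering mechanism of Lemma \ref{c1a-p5}.
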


The theorems above will follow from a refinement of Lemma
\ref{c1a-l3}. We show that we may choose $\delta_0$ universal in
Lemma \ref{c1a-l3} and satisfy the conclusion at a point $\tilde
x$ possibly different from the origin. The key step is to use the
part $b)$ of Lemma \ref{lem-c1a-1}.

\begin{lem}\label{c1a-p5} Let $u: \om \times [-T,0] \to \R$  be a solution of \eqref{eqn-pma}
such that $u>1$ on $\partial \om \times [-T,0]$ and $u(0,0)=0$.
Let $E$ be an ellipsoid centered at
 the origin such that $|E| \geq 2^{-j} \, |B_1|$ and
$$E  \subset \{ \,  u(x,0) < 1 \, \} \subset \{ \,  u(x,-T) < 1 \, \} \subset B_1(y).
$$
Let $\delta_0,M$ be universal as they appear in Lemma \ref{c1a-l3}
for $d=C_n$ the constant from Lemma \ref{lem-c1a-1}. Then, there
exists a constant $C(j)$ (depending on universal constants and
$j$) such that if $k \ge 0$, $l$ are integers and
$$(C_0^{-k}, (1+\delta_0)^{-l})  \in A_{-t_0} \quad \mbox{for some $t_0\in (0,T]$}$$
and $m_0$ is an integer satisfying   $$3m_0 \le k -l- C(j), \quad
\quad M^{m_0}\, t_0 \geq C(j) \, T,$$ then we can find $\tilde x
\in \{\, u(x,-T) <1 \, \}$ such that
$$u(x,-T) \geq u(\tilde x, - \tilde t) - C_0^{C(j)+l+3m_0-k} + \max_{i=1,2} \{ q_i \cdot (x-\tilde x) \},$$ with
$$ \tilde t = T - \frac T{C(j)} \quad \quad (q_2-q_1)\cdot e_n \geq (1+\delta_0)^{-l-C(j)}.$$

\end{lem}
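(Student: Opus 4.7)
The plan is to run the inductive scheme of Lemma \ref{c1a-l3} with the universal balancing constant $d = C_n$, but to allow the spatial center of the iteration to shift during the iteration. The enabling tool is part (b) of Lemma \ref{lem-c1a-1}, which provides the following dichotomy at each step: the relevant section of $u(\cdot,-\tau_m)$ around the current center $x^{(m)}$ is either $C_n$-balanced around $x^{(m)}$, or its center of mass $x^*$ satisfies $r(x^*) \geq 2\, r(x^{(m)})$.

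Concretely, I would maintain a sequence of data $(x^{(m)}, \tau_m, k_m, l_m)$ recording the current spatial center, backward time, and indices of the angle estimate $(C_0^{-k_m}, (1+\delta_0)^{-l_m}) \in A_{-\tau_m}(u_m)$ for the translated function $u_m(x,t) := u(x + x^{(m)}, t - \tau_m) - u(x^{(m)}, -\tau_m)$. At each stage I would apply the dichotomy from Lemma \ref{lem-c1a-1}(b). In the first case (a \emph{normal} step), I apply Proposition \ref{prop-angle-main} with the universal $\delta_0$ chosen for $d = C_n$, updating $(k_m, l_m, \tau_m)$ exactly as in the proof of Lemma \ref{c1a-l3}. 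In the second case (a \emph{switch} step), I use Remark \ref{rem-angle5} to transfer the angle estimate to a new center $x^{(m+1)} := x^*$, at the cost of a bounded additive loss in $k$ and $l$ and at most a bounded multiplicative factor in $\tau$.

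The key counting is that $r(0) \geq |E| \geq 2^{-j}|B_1|$ (because $E$ is an ellipsoid centered at $0$ contained in $\{u(\cdot,0)<1\}$), while $r$ is bounded above by $|B_1|$ and doubles at each switch. Hence the total number of switches is at most $j + C(n)$, so the cumulative cost of all switches is at most $C(j)$ additively in $k$ and $l$, and a factor $M^{C(j)} = C(j)$ in backward time. Once no further switch can improve the maximal inscribed ellipsoid, the current center $\tilde x$ admits a $C_n$-balanced section, and I would apply Lemma \ref{c1a-l3} directly to the translated function $\tilde u(x,t) := u(x+\tilde x, t - \tilde t) - u(\tilde x, -\tilde t)$ on the time interval $[-(T-\tilde t), 0]$ with $d = C_n$. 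Choosing $\tilde t = T - T/C(j)$ leaves backward time $T/C(j)$ for this final induction, and the hypothesis $M^{m_0} t_0 \geq C(j) T$ supplies enough budget for both the $\leq j + C(n)$ switches and the $m_0$ normal steps. Translating back to $u$ yields the stated conclusion.

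The main obstacle is the careful bookkeeping of the indices $(k_m, l_m)$ and the time $\tau_m$ through the switch steps: each switch must contribute at most an $O(1)$ additive loss in both $k$ and $l$, so that $j + C(n)$ switches produce cumulative losses absorbed by $C(j)$, and the time accounting must be consistent with the hypothesis $M^{m_0} t_0 \geq C(j) T$. A further subtlety is that in a switch step the section around the current center need not be $C_n$-balanced but only $C_n d_{(m)}$-balanced for some $d_{(m)}$ potentially large; however the monotonic improvement $r(x^{(m+1)}) \geq 2\, r(x^{(m)})$ guaranteed by Lemma \ref{lem-c1a-1}(b) bounds the total number of such switches, keeping losses controlled by $C(j)$.
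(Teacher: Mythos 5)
Your proposal is correct and is essentially the paper's own argument: the paper formalizes the same switch-counting via an induction on $j$, where each recentering at the center of mass $x^*$ (justified by Lemma \ref{lem-c1a-1}(b) and Remark \ref{rem-angle5}) doubles the inscribed-ellipsoid volume and hence invokes the inductive hypothesis at level $j-1$, with per-switch losses of size $C(j)$ in $k$, $l$ and in the time budget, exactly as you describe. Your direct iteration with at most $j+C(n)$ switches is just an unrolled version of that induction, using the same key ingredients (Lemma \ref{lem-c1a-1}(b), Proposition \ref{prop-angle-main}, Remark \ref{rem-angle5}, and Lemma \ref{c1a-l3} for the final stretch).
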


\begin{rem}\label{c1a-r6} Another way of stating the conclusion of the lemma is that
the translation
\begin{equation}\label{eqn-tilde_u}
\tilde u(x,t):= u(x+\tilde x, t - \tilde t) - u(\tilde x, - \tilde
t), \qquad  \tilde t = T - \frac T{C(j)} \end{equation} satisfies
$$\left ( C_0^{C(j)+l+3m_0-k}, (1+\delta_0)^{-l-C(j)} \right ) \in A_{-T/C(j)}(\tilde u).$$
\end{rem}

\begin{proof} The proof is by induction in $j$.

The case $j=1$ is proved in Lemma \ref{c1a-l3}. Indeed, since
$B_{1/2} \subset E \subset B_1(y) \subset B_{d/2}$ we see that the
hypothesis \eqref{eqn-c1a-6} is satisfied and the conclusion holds
for $\tilde x=0$.

For a general $j$ we start the proof as before. The only
difference here is that we cannot guarantee in the induction step
$m \Rightarrow m+1$ that there exists a section at time
$-t_m=-M^mt_0$ which is $C_n\, d = C_n^2$ balanced around the
origin.

 Let's assume this fails for a first integer $m$.
 By Lemma \ref{lem-c1a-1} we can find a $C_1(j)$-balanced section
(with $C_1(j)>C_n^2$) at the time $-t_m$. The idea is to apply
Proposition \ref{prop-angle-main} as in the induction step and
then to ``replace" the origin with the center of mass $x^*$ of
this section. To be more precise, by Remark \ref{rem-angle5}, the
translation
$$\tilde u(x,t) = u(x^*+x,t-t_m) - u(x^*,-t_m)$$
satisfies
$$\left ( C_2(j)\, C_0^{r_1-k}, (1+\delta_0)^{r_2-l} \, e^{-\delta_0} \right ) \in A_{- \tilde t_0}(\tilde u)$$
with $$\tilde t_0 := c_1(j)\, t_m = c_1(j)\, M^m\, t_0$$ and from
\eqref{eqn-s_m}-\eqref{eqn-c1a-4}
$$r_1 \le 3m +r_2, \quad 0 \le r_2 \le l + C_3(j).$$
Here we assumed that $T>t_m + \tilde t_0$, otherwise the proof is
the same as before by taking $\tilde x=0$, and there is no need to
change the origin. Notice that $m_0 >m$ if $C(j)>1/c_1(j)$.

The above imply
$$(C_0^{-\tilde k}, (1+\delta_0)^{-\tilde l} ) \in A_{- \tilde t_0}(\tilde u)$$
with $$\tilde l := l-r_2 + C_1 \quad  \mbox {and} \quad \tilde k =
k-(3m+r_2) - C_4(j).$$

Now we apply the induction $(j-1)$-step for $\tilde u$. First we
set $$\tilde m_0:=m_0-m \quad \mbox{ and} \quad \tilde T: =
T-t_m,$$ and we have $\tilde T \geq \tilde t_0 \ge c_1(j)t_m \ge
c_2(j)\, T$.

 By Lemma \ref{lem-c1a-1} the maximal ellipsoid centered at the
origin and included in the set $\{\, \tilde u(x,0) < \tilde a\}$
has volume greater than $ 2^{j-1}\, |B_1|$. The constant $\tilde
a=1 - u(x^*,-t_m)$ and by Remark \ref{rem-fs1}, $c_3(j) \le \tilde
a \le 1/c_3(j)$. Thus in order to apply the rescaled induction
step for $\tilde u$ we need to check that (see Remark
\ref{c1a-r4})
$$\tilde k \ge C'(j), \quad 3 \tilde m_0 \le \tilde k -\tilde l
- C'(j), \quad M^{\tilde m_0} \tilde t_0 \ge \tilde TC'(j)$$ for
some large constant $C'(j)$.

If $C(j)$ is sufficiently large then $$M^{\tilde m_0} \, \tilde
t_0 = M^{m_0-m} \, c_1(j) M^m \, t_0 \geq C(j) c_1(j) \, T \geq
C'(j) \, \tilde T,$$ and
\begin{equation}\label{eqn-8.2}
\begin{split}
\tilde k - (\tilde l + 3\tilde m_0) &= k-l - 3(m+\tilde m_0) - C_4(j) - C_1 \\
&=(k-l-3m_0) - C_5(j) \\
&\geq C(j) - C_5(j) \geq C'(j).
\end{split}
\end{equation}
and also,
$$
\tilde k  \geq k - (3m+l) - C_3(j)-C_4(j) $$ $$  \geq
k-(3m_0+l)-C_6(j) \geq C(j)-C_6(j)\ge C'(j).
$$

From the equality in \eqref{eqn-8.2}, $\tilde T \ge c_2(j)T$ and
$\tilde l \le l + C_1$ we clearly obtain the desired result when
we apply the induction step by choosing $C(j)$ sufficiently large.
\end{proof}

\begin{rem}\label{c1a-r7} If in addition to the hypothesis of the lemma we have $$u(0,0) - u(0,-T) \geq a,$$ then
$$u(\tilde x, -\tilde t) - u(\tilde x, -T) \geq \frac a{C(j)} -  \, C_0^{C(j)+l+3m_0-k},
\qquad \mbox{for} \,\, \tilde t = T - \frac T{C(j)}.$$ This and
the conclusion of the lemma imply
$$a \le  C_0^{C(j)+l+3m_0-k}, $$
with $C(j)$ a constant larger than the previous ones.

\end{rem}
\begin{proof}[Proof of Remark \ref{c1a-r7}] From the proof of Lemma \ref{c1a-p5} we see
that when for a certain $m$ we replace $0$ with the center of mass
$x^*$ of the section $S_h:= \{\, u(x, - t_m) \leq l(x) \}$ (for
$l$ linear) with $h=C_0^{r_1-k}$, then
$$u(x^*,-t_m)-l(x^*)\ge -C(j)h.$$
On the other hand, we have
$$u(0,-T) -l(0) \le u(0,-T)-u(0,0) \le - a,$$
and since $u(x,-T)-l(x)$ is negative in $S_h$, at $x^*$ we have
$$u(x^*,-T) -l(x)  \le - \frac a{C(j)}.$$
In conclusion
$$u(x^*,-t_m) \geq u(x^*,-T) + \tilde a, \quad \quad \tilde a:=\frac a{C(j)} - C(j)\, h.$$
Since we perform this change of origin at most $j$ times we obtain
the desired result.
\end{proof}

\begin{proof}[Proof of Theorem \ref{thm-c1a2}] Let
$$(C_0^{-k}, (1+\delta_0)^{-l}) \in A_0, \qquad \mbox{for some}\,\,  k \geq 0$$
where $C_0$ and $\delta_0$ are the constants taken from Lemma
\ref{c1a-p5}. Let $E$ be an ellipsoid of volume $2^{-j}$ around
the origin included in the set $\{ x: u_0(x) < 1 \}$ where $j$
depends on $\dist (k, \partial \, \{ u_0(x) < 1 \}$. In view of
Lemma \ref{c1a-lem7}, it suffices to prove the existence of
constants $\e_0$ and $C_1$ universal and $\tilde C(j)$ such that
\begin{equation}\label{eqn-c1a-10}
l \geq \e_0 \, (k + C_1\, \log T ) - \tilde C(j).
\end{equation}
Since our assumption $(C_0^{-k}, (1+\delta_0)^{-l}) \in A_0$
implies that $l \geq -C_0(j)$ if $k \geq 0$, it follows that
\eqref{eqn-c1a-10} is satisfied,  for some $\tilde C(j)$, if
$$k \leq - C_1\, \log T + C_1(j),$$
where $C_1(j)$ will be specified later. Assume, by contradiction
that \eqref{eqn-c1a-10} does not hold. Thus, since $T\le 1$,
\begin{equation}\label{eqn-c1a-12}
\e_0 \, k > l, \quad \mbox{for some }\,\,\, k > - C_1 \, \log T +
C_1(j)\, \geq C_1(j).
\end{equation}
Using the Lipschitz continuity of $u_0$ we obtain, as in
\eqref{eqn-lip1}, that
$$(C_0^{-k}, (1+\delta_0)^{-l}) \in A_0 \Rightarrow (C_0^{-k+1}, (1+\delta_0)^{-l}
- C(j)\, C_0^{-k}) \in A_{-t_0}$$ with $t_0=c(j)\, C_0^{-k(np+1)}$
which implies that
$$ (C_0^{-k+1}, (1+\delta_0)^{-l-1} ) \in A_{-t_0}.$$
We now apply Lemma \ref{c1a-p5} with $m_0 = [\,  \frac k6\, ]$ and
check that he hypotheses are satisfied. Recall that
$M=C_0^{12(np+1)}$ hence
$$M^{m_0} \, t_0 \geq c(j) \, C_0^{(12 m_0-k)(np+1)} \geq C(j)
\geq C(j)\, T.$$ Also, $l < \e_0 k$ implies that
\begin{equation}\label{eqn-c1a-13}
k \geq \frac {2k}3 \geq 3m_0 + l + C(j)
\end{equation} by choosing  $C_1(j)$ sufficiently large.

Thus, Lemma \ref{c1a-p5} holds. Now we apply the estimate
\eqref{eqn-angle2} for the translation function $\tilde u$ of
\eqref{eqn-tilde_u} that appears in the conclusion of Lemma
\ref{c1a-p5}. In our case
$$\tilde h = C_0^{C(j) + 3m_0 + l -k}, \qquad \tilde \alpha = (1+\delta_0)^{-l - C(j)}, \qquad
\tilde t_0 = \frac T{C(j)}.$$ Since $ S_{\tilde h}' \subset
B_1(y)$ we have $|S_{\tilde h}'|\le C$, hence
$$\tilde h^{1-(n-2) p} \tilde \alpha^{-2p} \geq \frac T{C_2(j)}.$$
Using  \eqref{eqn-c1a-13} we have
$$ (1-(n-2)p)(-\frac k3 ) \, \log C_0  + 2p \, l \,\log (1+\delta_0)\, \geq \log T -  C_3(j)$$
or
$$l - 2\e_0 \, k \geq C\, \log T - C_4(j), \qquad \e_0 := \frac{(1-(n-2)p)\, \log C_0}{12 \, p \,\log (1+\delta_0)}$$
and $C$  universal. We obtain the inequality
$$\e_0 \, k \leq C\, |\log T| + C_4(j)$$
which  contradicts  our assumption \eqref{eqn-c1a-12} if we choose
the constants $C_1$ and $C_1(j)$ appropriately. This concludes the
proof of the theorem.

\end{proof}

We will next sketch the proof of Theorem \ref{thm-c1a3} for the
case $p= \frac 1{n-2}$.

\begin{proof}[Proof of Theorem \ref{thm-c1a3}] The proof is the same as above with
the difference that we need to replace $k$ by $\log k$  in
\eqref{eqn-c1a-10}, i.e. we need to show that there exists $\e_0$
and $C_1$ universal such  that
\begin{equation}\label{eqn-c1a-16}
l \geq \e_0 \, (\log k +  C_1\, \log T ) - \tilde C(j).
\end{equation}

After we apply Lemma \ref{c1a-p5} we know that the translation
$\tilde u$ is a above an angle of opening $\tilde \al$ at time
$-\tilde t_0$ and it separates away from it at most a distance
$\tilde h$ at time $0$. Now we use the stronger estimate
(rescaled) obtained in Proposition \ref{lem-p11} instead of
\eqref{eqn-angle2}. We find
$$ \tilde h \ge c(j)e^{- C \tilde \al^{-1}\tilde
t_0^{-\frac{n-2}{2}}},$$ hence
$$C_0^{C(j) + 3m_0 + l -k} \geq e^{-\frac {C(j)(1+\delta_0)^l}{T^C}}.$$
We obtain
$$\frac k3 \leq  \frac{C(j)\, (1+\delta_0)^l}{T^C},$$
or
$$l \ge 2\e_0 \, \log k + C\, \log T - C(j),$$
and we finish the proof as before.
\end{proof}

We will now proceed with the proof of Theorem \ref{thm-c1a1}.

\begin{proof}[Proof of Theorem \ref{thm-c1a1}]
We begin by observing that since
$u_0(0)=0$, then
$$T \leq C (\,  \| u(\cdot, -T) \|_{L^\infty(\bar S)}).$$
We want to prove that if $(C_0^{-k}, (1+\delta_0)^{-l}) \in A_0$,
for some $k \geq 0$, then
\begin{equation}\label{eqn-c1a-17}
l \geq \e_0 \, k +  C(j, \, a) \quad \mbox{with} \quad a:=\|
\partial_{e_n}u(\cdot, -T) \|_{C^\beta(\bar S)}
\end{equation}
for some $\e_0$ depending on $\beta$ and universal constants. To
show \eqref{eqn-c1a-17} we argue similarly as before. If
\eqref{eqn-c1a-17} doesn't hold, then
$$\e_0\, k >l, \quad \mbox{for some} \,\,\,\,  k > C_1(j, a) .$$
We set $m_0= [ \, \frac k6 \, ] $ and that the hypotheses of Lemma
\ref{c1a-p5} are clearly satisfies. We find that
$$\left ( C_0^{C(j) + 3m_0 + l -k}, (1+\delta_0)^{-l - C(j)} \right ) \in A_{-\tilde T}(\tilde u)$$
from which we conclude that
$$\left ( C_0^{ - \frac k3}, (1+\delta_0)^{-l - C(j)} \right ) \in A_{-\tilde T}(\tilde u).$$
Using that $\partial_{e_n}u(\cdot,-T) \in C^{\beta}$ at $\tilde x$
we obtain
$$\frac{\log (1+\delta_0)}{\log C_0} \, (l+C(j)) \geq \frac {\beta}{\beta+1} \, \frac k3 - C(j,a)$$
from which we derive a contradiction if $\e_0(\beta)$ is chosen
sufficiently small and $ C_1(j,a)$ is chosen large. This concludes
the proof of our theorem.
\end{proof}

We finish with the proof of Theorem \ref{thm-c1a4}.

\begin{proof}[Proof of Theorem \ref{thm-c1a4}] We use the previous
notation. It suffices to show that for some $\e_0$ universal
$$l \ge \e_0 k-C(j,a).$$
From Proposition \ref{prop-p6} we obtain the bound $T \le C(j,a)$.
Now the proof is the same as before. In view of the Remark
\ref{c1a-r7}  our hypothesis implies that
$$C_0^{C(j) + 3m_0 + l -k} \geq a,$$
and the conclusion clearly follows.
\end{proof}

\newpage

\end{document}